\DeclareSymbolFontAlphabet{\mathbb}{AMSb}
\DeclareSymbolFontAlphabet{\mathbbol}{bbold}
\theoremstyle{plain}
\newtheorem{theorem}{\normalfont\scshape Theorem}[section]
\newtheorem{proposition}{\normalfont\scshape Proposition}[section]
\newtheorem{lemma}[proposition]{\normalfont\scshape Lemma}
\newtheorem*{corollary*}{\normalfont\scshape Corollary}
\theoremstyle{remark}
\newtheorem*{remark*}{\normalfont\scshape Remark}
\newtheorem*{notation}{\normalfont\scshape Notation}
\numberwithin{equation}{section}
\renewcommand{\footnoterule}{
	\kern -3pt
	\hrule width 2.5in height 0.4pt
	\kern 3pt
}
\begin{document}
	
\title[On a system of two Diophantine inequalities with six prime variables]
      {On a system of two Diophantine inequalities with six prime variables}
	
\author[Linji Long, Jinjiang Li, Min Zhang, Rui Sun]
{Linji Long \quad \& \quad Jinjiang Li \quad \& \quad Min Zhang \quad \& \quad Rui Sun}
	
\address{[Linji Long] Department of Mathematics, China University of Mining and Technology,
		              Beijing 100083, People's Republic of China}
	
\email{\textcolor{black}{linji.long.math@gmail.com}}

\address{[Jinjiang Li] (Corresponding author) Department of Mathematics, China University of Mining and Technology,
		               Beijing 100083, People's Republic of China}
	
\email{\textcolor{black}{jinjiang.li.math@gmail.com}}

\address{[Min Zhang] School of Applied Science, Beijing Information Science and Technology University,
		             Beijing 100192, People's Republic of China  }
	
\email{\textcolor{black}{min.zhang.math@gmail.com}}

\address{[Rui Sun] Department of Mathematics, China University of Mining and Technology,
		           Beijing 100083, People's Republic of China}
	
\email{\textcolor{black}{rui.sun.math.nt@gmail.com}}
	
\date{}
	
\footnotetext[1]{Jinjiang Li is the corresponding author. \\
\quad\,\,
{\textbf{Keywords}}: Diophantine inequality; Circle method; Exponential sum; Prime variable\\
		
\quad\,\,
{\textbf{MR(2020) Subject Classification}}: 11D75, 11P05, 11L07, 11L20
		
}

\begin{abstract}
Suppose that $c,d,\alpha,\beta$ are real numbers satisfying the inequalities $1<d<c<79/71$ and $1<\alpha<\beta<6^{1-d/c}$. In this paper, it is proved that, for sufficiently large real numbers $N_1$ and $N_2$ subject to $\alpha\leqslant N_2/N_1^{d/c}\leqslant\beta$, the following Diophantine inequalities system
\begin{align*}
\begin{cases}
|p_1^c+p_2^c+p_3^c+p_4^c+p_5^c+p_6^c-N_1|<\varepsilon_1 (N_1) \\
|p_1^d+p_2^d+p_3^d+p_4^d+p_5^d+p_6^d-N_2|<\varepsilon_2 (N_2)
\end{cases}
\end{align*}
is solvable in prime variables $p_1, p_2, p_3, p_4, p_5, p_6$, where
\begin{align*}
\begin{cases}
\varepsilon_1 (N_1)=N_1^{-(1/c)(79/71-c)} (\log N_1)^{201}, \\
\varepsilon_2 (N_2)=N_2^{-(1/d)(79/71-d)} (\log N_2)^{201} .
\end{cases}
\end{align*}
This result constitutes an improvement upon the previous result of Han--Liu--Zhang \cite{Han-Liu-Zhang-2021}.
\end{abstract}
	
\maketitle

\section{Introduction and main result}
The famous Waring--Goldbach problem in additive number theory states that every large integer $N$ satisfying appropriate congruent conditions should be represented as the sum of $s$ $k$--th powers of prime numbers, i.e.,
\begin{equation}\label{Waring-Goldbach-General}
N=p_1^k+p_2^k+\dots+p_s^k.
\end{equation}
In this topic, many mathematicians have derived many splendid results. For instance, in 1937, Vinogradov \cite{Vinogradov-1937} proved that such a representation of the type (\ref{Waring-Goldbach-General}) exists for every sufficiently large odd integer with $k=1,s=3$. Moreover, in 1938, Hua \cite{Hua-1938} showed that (\ref{Waring-Goldbach-General}) is solvable for every sufficiently large integer $N$ satisfying
$N\equiv 5\pmod{24}$ with $k=2,s=5$.

In 1952, Piatetski--Shapiro \cite{Piatetski-Shapiro-1952} studied the following analog of the Waring--Goldbach problem. Suppose that $c>1$ is not an integer and $\varepsilon$ is a small positive number. Denote by $H(c)$ the smallest natural number $s$ such that, for every sufficiently large real number $N$, the Diophantine inequality
\begin{equation}\label{WG-analogue}
\big|p_1^c+p_2^c+\cdots+p_s^c-N\big|<\varepsilon
\end{equation}
is solvable in primes $p_1,p_2,\dots,p_s$. Then it was proved in \cite{Piatetski-Shapiro-1952} that
\begin{equation*}
\limsup_{c\to+\infty}\frac{H(c)}{c\log c}\leqslant 4.
\end{equation*}
Also, in \cite{Piatetski-Shapiro-1952}, Piatetski--Shapiro considered the case $s=5$ in (\ref{WG-analogue}) and proved that $H(c)\leqslant5$ for $1<c<3/2$. Later, the upper bound $3/2$ for $H(c)\leqslant5$ was improved
successively to
\begin{equation*}
\frac{14142}{8923},\quad \frac{1+\sqrt{5}}{2},\quad \frac{81}{40}, \quad 2.041, \quad \frac{378}{181}
\end{equation*}
by Zhai and Cao \cite{Zhai-Cao-2003}, Garaev \cite{Garaev-2003}, Zhai and Cao \cite{Zhai-Cao-2007}, Baker and Weingartner \cite{Baker-Weingartner-2013},  Baker \cite{Baker-2021}, respectively.
	
In 1995, Tolev \cite{Tolev-1995} considered the system of two inequalities with $s=5$ as follows
\begin{equation}
\begin{cases}\label{5-var-system}
\big|p_1^c+p_2^c+p_3^c+p_4^c+p_5^c-N_1\big|<\varepsilon_1(N_1), \\
\big|p_1^d+p_2^d+p_3^d+p_4^d+p_5^d-N_2\big|<\varepsilon_2(N_2),
\end{cases}
\end{equation}
where $c$ and $d$ are different numbers greater than one but close to one and
$\varepsilon_1(N_1),\varepsilon_2(N_2)$ tend to zero as $N_1$ and $N_2$ tend to infinity. Of course, one
has to impose a condition on the orders of $N_1$ and $N_2$ because of the inequality
\begin{equation*}
(x_1^c+\dots+x_5^c)^{d/c}\leqslant x_1^d+\dots+x_5^d\leqslant5^{1-d/c}(x_1^c+\dots+x_5^c)^{d/c}
\end{equation*}
which holds for every positive $x_1,\dots,x_5$ provided $1<d<c$. Tolev \cite{Tolev-1995} proved that if
$c,d,\alpha,\beta$ are real numbers satisfying
\begin{equation*}
1<d<c<35/34,\qquad 1<\alpha<\beta<5^{1-d/c},
\end{equation*}
then there exist numbers $N_1^{(0)}$ and $N_2^{(0)}$, which depend on $c,d,\alpha,\beta$, such that for all real numbers $N_1,N_2$ subject to $N_1>N_1^{(0)},N_2>N_2^{(0)}$ and
\begin{equation*}
\alpha\leqslant N_2/N_1^{d/c}\leqslant\beta,
\end{equation*}
the system (\ref{5-var-system}) has prime solutions $p_1,\dots,p_5$ for
\begin{equation*}
\begin{cases}
\varepsilon_1(N_1)=N_1^{-(1/c)(35/34-c)}(\log N_1)^{12}, \\
\varepsilon_2(N_2)=N_2^{-(1/d)(35/34-d)}(\log N_2)^{12}.
\end{cases}
\end{equation*}
Later, in 2000, Zhai \cite{Zhai-2000} enhanced the result of Tolev \cite{Tolev-1995} and enlarged the range to $1<d<c<25/24$. Recently, Zhang--Li--Long--Yang \cite{Zhang-Li-Long-Yang-2026} improved the result of Zhai \cite{Zhai-2000} and showed that (\ref{5-var-system}) is solvable for $1<d<c<39/37$.
	
In 2021, Han--Liu--Zhang \cite{Han-Liu-Zhang-2021} extended the investigation to the system of two inequalities with $s=6$, i.e.,
\begin{equation*}
\begin{cases}
|p_1^c + p_2^c + p_3^c + p_4^c + p_5^c + p_6^c - N_1| < \varepsilon_1(N_1), \\
|p_1^d + p_2^d + p_3^d + p_4^d + p_5^d + p_6^d - N_2| < \varepsilon_2(N_2).
\end{cases}
\end{equation*}
They proved that, for sufficiently large $N_1$ and $N_2$, the above system has infinitely many prime solutions, provided that
\begin{equation*}
1 < d < c < {128}/{119}, \qquad 1 < \alpha < \beta < 6^{1-d/c},
\end{equation*}
 $\alpha \leqslant N_2/N_1^{d/c} \leqslant \beta$ and
\begin{align*}
\begin{cases}
\varepsilon_1 (N_1)=N_1^{-(1/c)(128/119-c)} (\log N_1)^{201}, \\
\varepsilon_2 (N_2)=N_2^{-(1/d)(128/119-d)} (\log N_2)^{201} .
\end{cases}
\end{align*}
	
In this paper, we shall enhance the result of Han--Liu--Zhang \cite{Han-Liu-Zhang-2021} and establish the following theorem.
\begin{theorem}\label{Theorem-1}
Suppose that $c, d, \alpha, \beta$ are real numbers satisfying the inequalities
\begin{equation}\label{eq1_4}
1<d<c<79/71, \qquad 1<\alpha<\beta<6^{1-d/c} .
\end{equation}
Then there exist numbers $N_1^{(0)}$ and $N_2^{(0)}$, which depend on $c, d, \alpha, \beta$, such that for all real numbers $N_1, N_2$ subject to $N_1>N_1^{(0)}$, $N_2>N_2^{(0)}$ and
\begin{equation}\label{eq1_5}
\alpha \leqslant N_2 / N_1^{d/c} \leqslant \beta,
\end{equation}
the system
\begin{align*}
\begin{cases}
|p_1^c+p_2^c+p_3^c+p_4^c+p_5^c+p_6^c-N_1|<\varepsilon_1(N_1) \\
|p_1^d+p_2^d+p_3^d+p_4^d+p_5^d+p_6^d-N_2|<\varepsilon_2(N_2)
\end{cases}
\end{align*}
with
\begin{align*}
\begin{cases}
\varepsilon_1(N_1)=N_1^{-(1/c)(79/71-c)}(\log N_1)^{201} \\
\varepsilon_2(N_2)=N_2^{-(1/d)(79/71-d)}(\log N_2)^{201}
\end{cases}
\end{align*}
is solvable in prime variables $p_1, p_2, p_3, p_4, p_5, p_6$.
\end{theorem}
	
\begin{remark*}
In order to compare our result with the previous result of Han--Liu--Zhang \cite{Han-Liu-Zhang-2021}, we list the numerical results as follows:
\begin{equation*}
\frac{128}{119}=1.0756302 \dots ;  \qquad \frac{79}{71}=1.1267605\dots .
\end{equation*}
\end{remark*}
	
\begin{notation}
Let $c,d,\alpha,\beta$ be numbers satisfying (\ref{eq1_4}). The letter $p$, with or without subscript, always denotes a prime number. As usual, we use $\mu(n)$ and $\Lambda(n)$ to denote M\"{o}bius' function and von Mangoldt's function, respectively. Throughout this paper, the constants in $O$--terms and $\ll$--symbols are absolute or at most depend on $c,d,\alpha,\beta$. $f(x)\ll g(x)$ means that $f(x)= O(g(x))$; $f(x) \asymp g(x)$ means that $f(x)\ll g(x)\ll f(x)$. $N_1$ and $N_2$ are sufficiently large real numbers subject to (\ref{eq1_5}). Set
\begin{gather*}
X=N_1^{1 / c}, \qquad \varepsilon_1=X^{-(79/71-c)}(\log X)^{401},
\qquad \varepsilon_2=X^{-(79/71-d)}(\log X)^{401},
					\nonumber \\
K_1=\varepsilon_1^{-1} \log X,
\qquad K_2=\varepsilon_2^{-1} \log X,
\qquad \tau_1=X^{3/4-c-\eta},
\qquad \tau_2=X^{3/4-d-\eta},
\end{gather*}
where $\eta$ is a positive number which is sufficiently small in terms of $c$ and $d$. $e(t)=\exp(2\pi it)$, $\phi(t)=e^{-\pi t^2}$, $\phi_\delta(t)=\delta\cdot\phi(\delta t)$. $\mathds{1}_{[-1,1]}(t)$ is the characteristic function of the interval $[-1,1]$. Denote by $\lambda$ a sufficiently small positive number, depending on $c, d, \alpha,\beta$, whose value will be determined more precisely by Lemma 1 of Tolev \cite{Tolev-1995}. Define
\begin{gather}
\mathscr{B}=\sum_{\lambda X<p_1, \dots, p_6 \leqslant X}\Bigg(\prod_{i=1}^6 \log p_i\Bigg)
\mathds{1}_{[-1,1]}\bigg(\frac{p_1^c+\cdots+p_6^c-N_1}{\varepsilon_1 \log X}\bigg)
\mathds{1}_{[-1,1]}\bigg(\frac{p_1^d+\cdots+p_6^d-N_2}{\varepsilon_2 \log X}\bigg)
					\nonumber\\
S(x, y)=\sum_{\lambda X<p \leqslant X}(\log p) e(p^c x+p^d y),\qquad
\mathcal{T}_\alpha(x)=\sum_{\lambda X<n \leqslant X} e(n^\alpha x)\label{eq1_6}
                    \nonumber\\
\mathscr{D}=\int_{-\infty}^{+\infty}\int_{-\infty}^{+\infty}S^6(x, y)e(-N_1x-N_2y)\phi_{\varepsilon_1}(x) \phi_{\varepsilon_2}(y)\mathrm{d}x\mathrm{d}y.
\end{gather}
\end{notation}

	\section{Proof of Theorem \ref{Theorem-1}}
	
	The Theorem follows if one shows that $\mathscr{B}$ tends to infinity as $X$ tends to infinity. We first give a lemma as follows.
	\begin{lemma}\label{Tolev-1995}
	The function $\phi(t)=e^{-\pi t^2}$ has the following properties
		\begin{align*}
			\mathrm{(i)} \quad & \phi(x) = \int_{-\infty}^{+\infty} \phi(t)e(-xt)\mathrm{d}t; \\
			\mathrm{(ii)} \quad & \mathds{1}_{[-1,1]}(t/\varrho) \geqslant \phi(t) - e^{-\pi \varrho^2}, \quad \text{for any } \varrho > 0; \\
			\mathrm{(iii)} \quad & \phi(t) \geqslant e^{-\pi}, \quad \text{for } |t| \leqslant 1.
		\end{align*}
	\end{lemma}
	\begin{proof}
		See Lemma 2 of Tolev \cite{Tolev-1995}.
	\end{proof}
	Trivially, by Lemma 2.1, we get
	\begin{align}\label{eq2_1}
		\mathscr{B}
		\geqslant & \,\, \sum_{\lambda X<p_1, \dots, p_6 \leqslant X}
				\Bigg(\prod_{i=1}^6 \log p_i\Bigg)
				\Bigg(\phi\bigg(\frac{p_1^c+\cdots+p_6^c-N_1}{\varepsilon_1}\bigg)
					-e^{-\pi(\log X)^2}\Bigg)
						\nonumber\\
		& \,\, \hspace{5em} \times
				\Bigg(\phi\bigg(\frac{p_1^d+\cdots+p_6^d-N_2}{\varepsilon_2}\bigg)
					-e^{-\pi(\log X)^2}\Bigg)
						\nonumber\\
		= & \,\, \sum_{\lambda X<p_1, \dots, p_6 \leqslant X}
				\Bigg(\prod_{i=1}^6 \log p_i\Bigg)
				\phi\bigg(\frac{p_1^c+\cdots+p_6^c-N_1}{\varepsilon_1}\bigg)
						\nonumber\\
		& \,\, \hspace{5em} \times
				\phi\bigg(\frac{p_1^d+\cdots+p_6^d-N_2}{\varepsilon_2}\bigg)
				+ O\Big(X^6 e^{-\pi(\log X)^2}\Big)
						\nonumber\\
		= & \,\, \sum_{\lambda X<p_1, \dots, p_6 \leqslant X}
				\Bigg(\prod_{i=1}^6 \log p_i\Bigg)
				\int_{-\infty}^{+\infty} \phi(t_1)
				e\bigg(\frac{(p_1^c+\cdots+p_6^c-N_1) t_1}{\varepsilon_1}\bigg) \mathrm{d} t_1
						\nonumber\\
		& \,\, \hspace{5em} \times
				\int_{-\infty}^{+\infty} \phi(t_2)
				e\bigg(\frac{(p_1^d+\cdots+p_6^d-N_2) t_2}{\varepsilon_2}\bigg) \mathrm{d} t_2
				+ O(1)
						\nonumber\\
		= & \,\, \sum_{\lambda X<p_1, \dots, p_6 \leqslant X}
				\Bigg(\prod_{i=1}^6 \log p_i\Bigg)
				\int_{-\infty}^{+\infty} e\big((p_1^c+\cdots+p_6^c-N_1) x\big) \varepsilon_1 \phi(\varepsilon_1 x) \mathrm{d} x
						\nonumber\\
		& \,\, \hspace{5em} \times
				\int_{-\infty}^{+\infty} e\big((p_1^d+\cdots+p_6^d-N_2) y\big)
				\varepsilon_2 \phi(\varepsilon_2 y) \mathrm{d} y
				+ O(1)
						\nonumber\\
		= & \,\, \int_{-\infty}^{+\infty} \int_{-\infty}^{+\infty}
				\Bigg(\sum_{\lambda X<p \leqslant X}(\log p) e(p^c x+p^d y)\Bigg)^6
				e(-N_1 x-N_2 y) \phi_{\varepsilon_1}(x)
				\phi_{\varepsilon_2}(y) \mathrm{d} x \mathrm{d} y
				+ O(1)
						\nonumber\\
		= & \,\, \mathscr{D} + O(1) .
	\end{align}
	Now, we divide the plane into three regions: $\Omega_1$---a neighbourhood of the origin, $\Omega_2$---an intermediate region, and $\Omega_3$---a trivial region, as follows
	\begin{align*}
		& \Omega_1=\big\{(x, y): \max \big(|x| \tau_1^{-1},|y| \tau_2^{-1}\big)<1\big\}, \\
		& \Omega_2=\big\{(x, y): \max \big(|x| \tau_1^{-1},|y| \tau_2^{-1}\big) \geqslant 1,
				\, \max \big(|x| K_1^{-1},|y| K_2^{-1}\big) \leqslant 1\big\}, \\
		& \Omega_3=\big\{(x, y): \max \big(|x| K_1^{-1},|y| K_2^{-1}\big)>1\big\} .
	\end{align*}	
	Correspondingly, we represent the integral $\mathscr{D}$ as
	\begin{equation}\label{eq2_2}
		\mathscr{D}=\mathscr{D}_1+\mathscr{D}_2+\mathscr{D}_3,
	\end{equation}
	where $\mathscr{D}_i$ denotes the contribution to the integral $\mathscr{D}$ in (\ref{eq2_2}) arising from the set $\Omega_i$. The result of (\ref{eq2_1}) implies that it suffices to prove that $\mathscr{D}$ tends to infinity as $X$ tends to infinity. The last statement is a consequence of (\ref{eq2_2}) and of the following three inequalities
	\begin{gather}\label{eq2_3}
		\mathscr{D}_1 \gg \varepsilon_1 \varepsilon_2 X^{6-c-d}, \\
		\mathscr{D}_2 \ll \varepsilon_1 \varepsilon_2 X^{6-c-d}(\log X)^{-1}, \nonumber\\
		\mathscr{D}_3 \ll 1 . \nonumber
	\end{gather}
	According to Lemma 14 of Han--Liu--Zhang \cite{Han-Liu-Zhang-2021}, we obtain (\ref{eq2_3}). For the upper bound estimate of $\mathscr{D}_3$, it follows from the definition of $\Omega_3$ that
	\begin{align*}
		\mathscr{D}_3
		\ll & \,\, \int_{-\infty}^{+\infty} \phi_{\varepsilon_1}(x) \mathrm{d} x
				\int_{|y| \geqslant K_2}|S(x, y)|^6 \phi_{\varepsilon_2}(y) \mathrm{d} y
				+ \int_{|y|<K_2} \phi_{\varepsilon_2}(y) \mathrm{d} y
				\int_{|x| \geqslant K_1}|S(x, y)|^6 \phi_{\varepsilon_1}(x) \mathrm{d} x
							\\
		\ll & \,\, X^6 \Bigg(\int_{|y| \geqslant K_2} \varepsilon_2
					e^{-\pi \varepsilon_2^2 y^2} \mathrm{d} y
				+ \int_{|x| \geqslant K_1} \varepsilon_1
					e^{-\pi \varepsilon_1^2 x^2} \mathrm{d} x\Bigg)
							\\
		\ll & \,\, X^6(\log X)^{-1} e^{-\pi(\log X)^2} \ll 1,
	\end{align*}
	where we use the trivial estimate
	\begin{equation*}
		\int_{-\infty}^{+\infty} \phi_{\varepsilon_1}(x) \mathrm{d} x=\int_{-\infty}^{+\infty} \varepsilon_1 \phi(\varepsilon_1 x) \mathrm{d} x=\int_{-\infty}^{+\infty} \phi(t) \mathrm{d} t=\int_{-\infty}^{+\infty} e^{-\pi t^2} \mathrm{~d} t \ll 1,
	\end{equation*}
	and
	\begin{equation*}
		\int_{|y|<K_2} \phi_{\varepsilon_2}(y) \mathrm{d} y \ll \int_{-\infty}^{+\infty} \phi_{\varepsilon_2}(y) \mathrm{d} y \ll 1 .
	\end{equation*}
	In the rest of this section, we focus on the upper bound estimate of $\mathscr{D}_2$.
	
	\begin{lemma}\label{S(x,y)-int-mean-1}
		For $S(x,y)$ defined as in (\ref{eq1_6}), there holds
		\begin{align*}
			\emph{(i)}  & \,\, \int_{-\tau_1}^{\tau_1}|S(x,y)|^2\mathrm{d}x\ll X^{2-c}(\log X)^3, \quad
			\textrm{uniformly for all $y\in\mathbb{R}$};
			\nonumber \\
			\emph{(ii)} & \,\, \int_{-\tau_2}^{\tau_2}|S(x,y)|^2\mathrm{d}y\ll X^{2-d}(\log X)^3, \quad
			\textrm{uniformly for all $x\in\mathbb{R}$};
			\nonumber \\
			\emph{(iii)} & \,\, \int_{n}^{n+1}|S(x,y)|^2\mathrm{d}x\ll X(\log X)^3, \quad
			\textrm{uniformly for all $y\in\mathbb{R}$ and $n\in\mathbb{N}$};
			\nonumber \\
			\emph{(iv)} & \,\, \int_{n}^{n+1}|S(x,y)|^2\mathrm{d}y\ll X(\log X)^3, \quad
			\textrm{uniformly for all $x\in\mathbb{R}$ and $n\in\mathbb{N}$}.
		\end{align*}
	\end{lemma}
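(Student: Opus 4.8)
I would prove the four bounds as mean--value estimates of a single type, carrying out (i) in full and then indicating the minor changes for (ii)--(iv). First I would square out $S(x,y)$: writing it as a sum over one prime $p$ and expanding,
\[
|S(x,y)|^2=\sum_{\lambda X<p_1,p_2\leqslant X}(\log p_1)(\log p_2)\,e\big((p_1^c-p_2^c)x+(p_1^d-p_2^d)y\big).
\]
Integrating in $x$ over $[-\tau_1,\tau_1]$ and moving the absolute value through the double sum — the factor $e((p_1^d-p_2^d)y)$ has modulus $1$, which is exactly what makes the resulting bound uniform in $y$ — reduces matters to estimating
\[
\sum_{\lambda X<p_1,p_2\leqslant X}(\log p_1)(\log p_2)\bigg|\int_{-\tau_1}^{\tau_1}e\big((p_1^c-p_2^c)x\big)\,\mathrm{d}x\bigg|,
\]
where the inner integral equals $2\tau_1$ when $p_1=p_2$ and is $\ll\min\big(\tau_1,|p_1^c-p_2^c|^{-1}\big)$ otherwise.

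The diagonal terms $p_1=p_2$ contribute $\ll\tau_1\sum_{\lambda X<p\leqslant X}(\log p)^2\ll\tau_1 X\log X=X^{7/4-c-\eta}\log X$, which is comfortably within $X^{2-c}(\log X)^3$ since $7/4<2$. For the off--diagonal part the key input is the mean value theorem: for $p_1,p_2\asymp X$ with $p_1\neq p_2$ one has $p_1^c-p_2^c=c\xi^{c-1}(p_1-p_2)$ for some $\xi$ between $p_1$ and $p_2$, hence $|p_1^c-p_2^c|\asymp X^{c-1}|p_1-p_2|$. Putting $m=|p_1-p_2|\geqslant 1$ and noting that for each fixed $m$ there are $\ll X$ admissible pairs $(p_1,p_2)$, the off--diagonal sum is $\ll(\log X)^2\,X\sum_{1\leqslant m\leqslant X}\min\big(\tau_1,X^{1-c}m^{-1}\big)$. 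Splitting the $m$--range at the crossover $m\asymp X^{1-c}\tau_1^{-1}=X^{1/4+\eta}$, the sum over $m$ is $\ll X^{1/4+\eta}\tau_1+X^{1-c}\log X\ll X^{1-c}\log X$ (the two pieces being of comparable size up to a logarithm), so the off--diagonal contribution is $\ll X^{2-c}(\log X)^3$ and (i) follows.

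Estimate (ii) is the same argument with the roles of $c,d$, of $x,y$, and of $\tau_1,\tau_2$ interchanged. For (iii) and (iv) one integrates over a unit interval $[n,n+1]$ instead; the inner integral is then $\ll\min\big(1,|p_1^c-p_2^c|^{-1}\big)$, the diagonal contributes $\ll X\log X\ll X(\log X)^3$, and since $c>1$ one has $X^{1-c}m^{-1}\leqslant 1$ for every $m\geqslant 1$, so the off--diagonal part is $\ll(\log X)^2\,X\sum_{1\leqslant m\leqslant X}X^{1-c}m^{-1}\ll X^{2-c}(\log X)^3\ll X(\log X)^3$, the last step again using $c>1$. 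I expect the only place requiring a little care to be the uniform two--sided comparison $|p_1^c-p_2^c|\asymp X^{c-1}|p_1-p_2|$ — valid precisely because every prime in play is $\asymp X$ — together with the bookkeeping in the split of the $m$--sum; everything else is routine.
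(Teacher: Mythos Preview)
Your proof is correct and follows essentially the same approach as the paper: expand $|S|^2$, integrate to obtain the $\min(\tau_1,|p_1^c-p_2^c|^{-1})$ weight, and then use the mean--value theorem to reduce to a count over integer differences. The only cosmetic difference is bookkeeping --- the paper splits at $|n_1^c-n_2^c|=1/\tau_1$ into sums $\mathcal{U}$ and $\mathcal{V}$ and handles $\mathcal{V}$ by a dyadic decomposition in $|n_1^c-n_2^c|$, whereas you parametrize directly by $m=|p_1-p_2|$ and split the $m$--sum at the crossover; both routes give the same $X^{2-c}(\log X)^3$.
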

	\begin{proof}
		We follow the process of the arguments of Lemma 7 of Tolev \cite{Tolev-1995}. We only give the details of the proof
		of (i). The estimates (ii), (iii) and (iv) can be established likewise. We have
		\begin{align}\label{eq2_4}
			\int_{-\tau_1}^{\tau_1}|S(x,y)|^2\mathrm{d}x
			= & \,\, \sum_{\lambda X<p_1,p_2\leqslant X}(\log p_1)(\log p_2)e\big((p_1^d-p_2^d)y\big)
			\int_{-\tau_1}^{\tau_1}e\big((p_1^c-p_2^c)x\big)\mathrm{d}x
			\nonumber \\
			\ll & \,\, \sum_{\lambda X<p_1,p_2\leqslant X}(\log p_1)(\log p_2)
			\bigg|\int_{-\tau_1}^{\tau_1}e\big((p_1^c-p_2^c)x\big)\mathrm{d}x\bigg|
			\nonumber \\
			\ll & \,\, \sum_{\lambda X<p_1,p_2\leqslant X}(\log p_1)(\log p_2)\min\bigg(\tau_1,\frac{1}{|p_1^c-p_2^c|}\bigg)
			\nonumber \\
			\ll & \,\, \mathcal{U}\tau_1(\log X)^2+\mathcal{V}(\log X)^2,
		\end{align}
		where
		\begin{equation*}
			\mathcal{U}=\sum_{\substack{\lambda X<n_1,n_2\leqslant X\\ |n_1^c-n_2^c|\leqslant1/\tau_1}}1,\qquad\qquad
			\mathcal{V}=\sum_{\substack{\lambda X<n_1,n_2\leqslant X\\ |n_1^c-n_2^c|>1/\tau_1}}\frac{1}{|n_1^c-n_2^c|}.
		\end{equation*}
		Obviously, one has
		\begin{equation*}
			\mathcal{U}\ll\mathop{\sum_{\lambda X<n_1\leqslant X}
				\sum_{\lambda X<n_2\leqslant X}}_{(n_1^c-1/\tau_1)^{1/c}\leqslant n_2\leqslant(n_1^c+1/\tau_1)^{1/c}}1
			\ll\sum_{\lambda X<n_1\leqslant X}\big(1+(n_1^c+1/\tau_1)^{1/c}-(n_1^c-1/\tau_1)^{1/c}\big),
		\end{equation*}
		and by the mean--value theorem
		\begin{equation}\label{eq2_5}
			\mathcal{U}\ll X+\frac{1}{\tau_1}X^{2-c}.
		\end{equation}
		Trivially, by a splitting argument, there holds $\mathcal{V}\ll\sum_{\ell}\mathcal{V}_\ell$, where
		\begin{equation}\label{eq2_6}
			\mathcal{V}_\ell=\sum_{\substack{\lambda X<n_1,n_2\leqslant X\\ \ell<|n_1^c-n_2^c|\leqslant2\ell}}\frac{1}{|n_1^c-n_2^c|},
		\end{equation}
		and $\ell$ takes the values $2^k/\tau_1,\,k=0,1,2,\dots$, with $\ell\ll X^c$. Therefore, we get
		\begin{equation*}
			\mathcal{V}_\ell\ll\frac{1}{\ell}
			\mathop{\sum_{\lambda X<n_1\leqslant X}
				\sum_{\lambda X<n_2\leqslant X}}_{(n_1^c+\ell)^{1/c}\leqslant n_2\leqslant(n_1^c+2\ell)^{1/c}}1.
		\end{equation*}
		For $\ell\geqslant1/\tau_1$ and $\lambda X<n_1\leqslant X$, it is easy to see that
		\begin{equation*}
			(n_1^c+2\ell)^{1/c}-(n_1^c+\ell)^{1/c}>1.
		\end{equation*}
		Hence
		\begin{equation}\label{eq2_7}
			\mathcal{V}_\ell\ll\frac{1}{\ell}\sum_{\lambda X<n_1\leqslant X}
			\big((n_1^c+2\ell)^{1/c}-(n_1^c+\ell)^{1/c}\big)\ll X^{2-c}
		\end{equation}
		by the mean--value theorem. According to (\ref{eq2_4})--(\ref{eq2_7}), the conclusion follows.
	\end{proof}

	\begin{lemma}\label{S(x,y)-int-mean-2}
		For $S(x,y)$ defined as in (\ref{eq1_6}), there holds
		\begin{align*}
			\emph{(i)}  & \,\, \int_{|x|\leqslant K_1}|S(x,y)|^2\phi_{\varepsilon_1}(x)\mathrm{d}x\ll X(\log X)^4;
			\nonumber \\
			\emph{(ii)} & \,\, \int_{|y|\leqslant K_2}|S(x,y)|^2\phi_{\varepsilon_2}(y)\mathrm{d}y\ll X(\log X)^4.
		\end{align*}
	\end{lemma}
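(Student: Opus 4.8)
The plan is to deduce both estimates directly from the short-interval second moment bounds of Lemma \ref{S(x,y)-int-mean-1}, parts (iii) and (iv), combined with the trivial observation that $\phi(t)=e^{-\pi t^2}\leqslant 1$, whence $\phi_{\varepsilon_1}(x)\leqslant\varepsilon_1$ and $\phi_{\varepsilon_2}(y)\leqslant\varepsilon_2$ for all real arguments. The weight $\phi_{\varepsilon_1}$ thus contributes at most a factor $\varepsilon_1$, while the range $|x|\leqslant K_1$ is covered by $O(K_1)$ unit intervals on each of which Lemma \ref{S(x,y)-int-mean-1}(iii) supplies a bound $\ll X(\log X)^3$ uniform in $y$; since $\varepsilon_1 K_1=\log X$ by the definition of $K_1$, the product is exactly of size $X(\log X)^4$.

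In detail, for part (i) I would first use the evenness of $\phi_{\varepsilon_1}$ together with the identity $|S(-x,y)|=|S(x,-y)|$ (which follows by conjugation) to reduce to bounding $\int_0^{K_1}|S(x,y)|^2\phi_{\varepsilon_1}(x)\,\mathrm{d}x$ uniformly in $y$, so that only integer translates $[n,n+1]$ with $n\in\mathbb{N}$ occur. Then
\[
\int_{0}^{K_1}|S(x,y)|^2\phi_{\varepsilon_1}(x)\,\mathrm{d}x
\leqslant\varepsilon_1\sum_{0\leqslant n\leqslant K_1}\int_{n}^{n+1}|S(x,y)|^2\,\mathrm{d}x
\ll\varepsilon_1 K_1\,X(\log X)^3,
\]
by Lemma \ref{S(x,y)-int-mean-1}(iii) applied to each of the $O(K_1)$ inner integrals. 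Substituting $K_1=\varepsilon_1^{-1}\log X$ yields $\varepsilon_1 K_1=\log X$ and hence the claimed bound $\ll X(\log X)^4$. Part (ii) is handled identically, using $\phi_{\varepsilon_2}(y)\leqslant\varepsilon_2$, the decomposition of $|y|\leqslant K_2$ into $O(K_2)$ unit intervals, Lemma \ref{S(x,y)-int-mean-1}(iv) in place of (iii), and the identity $\varepsilon_2 K_2=\log X$.

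There is no genuine obstacle here; the argument is a routine packaging of Lemma \ref{S(x,y)-int-mean-1}. The only points deserving a word of care are that $K_1,K_2$ need not be integers (which alters the number of unit intervals only by $O(1)$) and the unit interval straddling the origin (covered by the same estimate). I note in passing that one is not forced to be this wasteful: bounding $\phi_{\varepsilon_1}(x)\leqslant\varepsilon_1 e^{-\pi\varepsilon_1^2 n^2}$ on $[n,n+1]$ and using $\sum_{n\geqslant 0}\varepsilon_1 e^{-\pi\varepsilon_1^2 n^2}\ll 1$ would sharpen the bounds to $X(\log X)^3$, but the weaker bound $X(\log X)^4$ is all that will be needed in the treatment of $\mathscr{D}_2$.
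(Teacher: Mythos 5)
Your argument is correct and is essentially the paper's proof: both rest on the bound $\phi_{\varepsilon_1}(x)\leqslant\varepsilon_1$, the unit-interval estimates (iii) and (iv) of Lemma \ref{S(x,y)-int-mean-1}, and the identity $\varepsilon_1K_1=\varepsilon_2K_2=\log X$. The only (immaterial) difference is that the paper first splits off the central range $|x|\leqslant\tau_1$ and treats it with part (i) of that lemma, whereas you absorb it into the $n=0$ unit interval, which part (iii) already covers since $\tau_1<1$.
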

	\begin{proof}
		For (i), it follows from (i) and (iii) of Lemma \ref{S(x,y)-int-mean-1} and the trivial estimate $\phi_{\varepsilon_1}(x)\ll\varepsilon_1$ that
		\begin{align*}
			\int_{|x|\leqslant K_1}|S(x,y)|^2\phi_{\varepsilon_1}(x)\mathrm{d}x
			= & \,\, \int_{|x|\leqslant \tau_1}|S(x,y)|^2\phi_{\varepsilon_1}(x)\mathrm{d}x
			+\int_{\tau_1<|x|\leqslant K_1}|S(x,y)|^2\phi_{\varepsilon_1}(x)\mathrm{d}x
			\nonumber \\
			\ll & \,\, \varepsilon_1X^{2-c}(\log X)^3+\varepsilon_1\sum_{0\leqslant n\leqslant K_1}
			\int_n^{n+1}|S(x,y)|^2\mathrm{d}x
			\nonumber \\
			\ll & \,\, \varepsilon_1X^{2-c}(\log X)^3+\varepsilon_1K_1X(\log X)^3  \ll X(\log X)^4.
		\end{align*}
		Similarly, for (ii), it follows from (ii) and (iv) of Lemma \ref{S(x,y)-int-mean-1} and the trivial estimate $\phi_{\varepsilon_2}(y)\ll\varepsilon_2$ that
		\begin{align*}
			\int_{|y|\leqslant K_2}|S(x,y)|^2\phi_{\varepsilon_2}(y)\mathrm{d}y
			= & \,\, \int_{|y|\leqslant \tau_2}|S(x,y)|^2\phi_{\varepsilon_2}(y)\mathrm{d}y
			+\int_{\tau_2<|y|\leqslant K_2}|S(x,y)|^2\phi_{\varepsilon_2}(y)\mathrm{d}y
			\nonumber \\
			\ll & \,\, \varepsilon_2X^{2-d}(\log X)^3+\varepsilon_2\sum_{0\leqslant n\leqslant K_2}
			\int_n^{n+1}|S(x,y)|^2\mathrm{d}y
			\nonumber \\
			\ll & \,\, \varepsilon_2X^{2-d}(\log X)^3+\varepsilon_2K_2X(\log X)^3  \ll X(\log X)^4.
		\end{align*}
		This completes the proof of Lemma \ref{S(x,y)-int-mean-2}.
	\end{proof}
	
	\begin{lemma}\label{Graham-Kolesnik-exponential}
		Suppose that $f(x):[a, b]\to \mathbb{R}$ has continuous derivatives of arbitrary order on $[a,b]$,
		where $1\leqslant a <b\leqslant2a$. Suppose further that
		\begin{equation*}
			\big|f^{(j)}(x)\big|\asymp\lambda_1a^{1-j},\quad j\geqslant1,\quad x\in[a,b].
		\end{equation*}
		Then, for any exponential pair $(\kappa,\lambda)$, we have
		\begin{equation*}
			\sum_{a<n\leqslant b}e(f(n))\ll\lambda_1^\kappa a^\lambda+\lambda_1^{-1}.
		\end{equation*}
	\end{lemma}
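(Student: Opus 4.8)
The plan is to deduce this from the fundamental property of exponent pairs, as developed in the monograph of Graham and Kolesnik on van der Corput's method (the same lemma, in essentially this form, underlies the arguments of Tolev and Zhai). Recall that a pair $(\kappa,\lambda)$ with $0\leqslant\kappa\leqslant\tfrac12\leqslant\lambda\leqslant1$ is an exponent pair precisely when, for every interval $[N,N']\subseteq[N,2N]$ and every function $g$ on $[N,N']$ possessing sufficiently many continuous derivatives and satisfying $|g^{(j)}(x)|\asymp YN^{-j}$ for all relevant $j\geqslant1$ (with $Y\geqslant1$), one has $\sum_{N<n\leqslant N'}e(g(n))\ll Y^{\kappa}N^{\lambda-\kappa}$; this estimate holds for the trivial pair $(0,1)$ and is propagated by the $A$-process (Weyl--van der Corput differencing) and the $B$-process (Poisson summation together with stationary phase). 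To bring our hypotheses into this shape I would set $N=a$ and $Y=\lambda_1a$, so that the assumption $|f^{(j)}(x)|\asymp\lambda_1a^{1-j}$ reads exactly as $|f^{(j)}(x)|\asymp YN^{-j}$ for every $j\geqslant1$, with $[a,b]\subseteq[a,2a]=[N,2N]$.

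I would then split the argument according to the size of $Y=\lambda_1a$. If $\lambda_1a\geqslant1$, the exponent-pair estimate applies directly and yields
\[
\sum_{a<n\leqslant b}e(f(n))\ll Y^{\kappa}N^{\lambda-\kappa}=(\lambda_1a)^{\kappa}a^{\lambda-\kappa}=\lambda_1^{\kappa}a^{\lambda},
\]
which is the first term on the right-hand side. If instead $\lambda_1a<1$, then $b-a\leqslant a<\lambda_1^{-1}$, so the sum runs over fewer than $2\lambda_1^{-1}$ integers and the trivial bound already gives $\sum_{a<n\leqslant b}e(f(n))\ll\lambda_1^{-1}$. (One could equally observe that in this range $|f'(x)|\asymp\lambda_1<a^{-1}\leqslant1$, so $f'$ is monotone and stays within $O(\lambda_1)$ of an integer throughout $[a,b]$, whence the Kuzmin--Landau inequality gives the same bound $\ll\lambda_1^{-1}$.) Adding the contributions of the two ranges produces the asserted estimate.

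The only point that needs care is ensuring that the exponent-pair bound may be quoted in the clean, secondary-term-free form used above: one must check that $f$ carries enough continuous derivatives (it does, since $|f^{(j)}(x)|\asymp\lambda_1a^{1-j}$ is imposed for all $j\geqslant1$), that the implied constants stay uniform through the iterated $A$- and $B$-processes, and that the degenerate regime $\lambda_1a<1$---in which the $B$-process would otherwise manufacture a parasitic lower-order term---is instead disposed of by the trivial (or first-derivative) estimate as above. All of this is routine within the standard theory, so no new idea is required; keeping track of the normalization $Y=\lambda_1a$, $N=a$ and the case split is the main, and only mild, obstacle.
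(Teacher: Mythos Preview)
Your argument is correct and is precisely the standard derivation underlying the exponent-pair formalism; the paper itself gives no proof but simply cites (3.3.4) of Graham and Kolesnik, which is exactly the estimate you have unpacked. Your case split at $\lambda_1 a\gtrless 1$ and the handling of the small-$\lambda_1$ range via the trivial (or Kuzmin--Landau) bound are the routine way this citation is justified.
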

	\begin{proof}
		See (3.3.4) of Graham and Kolesnik \cite{Graham-Kolesnik-book}.
	\end{proof}
	
	\begin{lemma}\label{expo-full-inte}
		For $S(x,y)$ defined as in (\ref{eq1_6}), there holds
		\begin{equation*}
			\int_{-\infty}^{+\infty}\int_{-\infty}^{+\infty}\big|S(x,y)\big|^4\phi_{\varepsilon_1}(x)
			\phi_{\varepsilon_2}(y)\mathrm{d}x\mathrm{d}y\ll X^2(\log X)^6.
		\end{equation*}
	\end{lemma}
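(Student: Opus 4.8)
The plan is to bound the fourth-moment integral by restricting to the region where $|x|\leqslant K_1$ and $|y|\leqslant K_2$, since the Gaussian weights $\phi_{\varepsilon_1}(x)\phi_{\varepsilon_2}(y)$ make the contribution from $\max(|x|K_1^{-1},|y|K_2^{-1})>1$ negligible (indeed $\ll 1$, by the same crude estimate already used for $\mathscr{D}_3$: trivially $|S(x,y)|\ll X$ and the tails of the Gaussians are exponentially small). So it suffices to estimate
\begin{equation*}
\int_{|x|\leqslant K_1}\int_{|y|\leqslant K_2}\big|S(x,y)\big|^4\phi_{\varepsilon_1}(x)\phi_{\varepsilon_2}(y)\,\mathrm{d}y\,\mathrm{d}x.
\end{equation*}

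The key idea is to write $|S(x,y)|^4=|S(x,y)|^2\cdot|S(x,y)|^2$ and apply Lemma \ref{S(x,y)-int-mean-2} in one variable at a time. First I would fix $x$ and integrate in $y$: by part (ii) of Lemma \ref{S(x,y)-int-mean-2} one has $\int_{|y|\leqslant K_2}|S(x,y)|^2\phi_{\varepsilon_2}(y)\,\mathrm{d}y\ll X(\log X)^4$, uniformly in $x$. However, one cannot simply pull out $\sup_y|S(x,y)|^2$, because that supremum need not be $\ll X(\log X)^{O(1)}$ on average. Instead the cleaner route is the standard trick for fourth moments: expand $|S(x,y)|^2=\sum_{\lambda X<p_1,p_2\leqslant X}(\log p_1)(\log p_2)e((p_1^c-p_2^c)x+(p_1^d-p_2^d)y)$ and note that $|S(x,y)|^4=\big||S(x,y)|^2\big|^2$ is itself a nonnegative quantity of the same shape with the variable set replaced by pairs. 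Concretely, I would instead argue as follows: on the region $\Omega_3^{c}$ we have $|x|\leqslant K_1$, and we can split this interval into $O(K_1)$ unit intervals; on each, part (iii) of Lemma \ref{S(x,y)-int-mean-1} gives $\int_n^{n+1}|S(x,y)|^2\,\mathrm{d}x\ll X(\log X)^3$ uniformly in $y$. Then
\begin{align*}
\int_{|x|\leqslant K_1}\int_{|y|\leqslant K_2}|S(x,y)|^4\phi_{\varepsilon_1}(x)\phi_{\varepsilon_2}(y)\,\mathrm{d}y\,\mathrm{d}x
&\ll \varepsilon_1\int_{|y|\leqslant K_2}\phi_{\varepsilon_2}(y)\Big(\sup_{x}|S(x,y)|^2\Big)\sum_{0\leqslant n\leqslant K_1}\int_n^{n+1}|S(x,y)|^2\,\mathrm{d}x\,\mathrm{d}y,
\end{align*}
which is too lossy. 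So the genuinely correct approach, and the one I would carry through, is: apply part (i) of Lemma \ref{S(x,y)-int-mean-2} after first reducing the fourth power to a second power of $|S|^2$ via Cauchy--Schwarz in a way that keeps one factor integrated in $x$ and one in $y$. Precisely, write $|S(x,y)|^4 = |S(x,y)|^2|S(x,y)|^2$ and integrate
\begin{equation*}
\int\!\!\int |S|^4\phi_{\varepsilon_1}\phi_{\varepsilon_2}\,\mathrm{d}x\,\mathrm{d}y
\leqslant \Big(\sup_{x}\int_{|y|\leqslant K_2}|S(x,y)|^2\phi_{\varepsilon_2}(y)\,\mathrm{d}y\Big)\cdot\Big(\sup_{y}\int_{|x|\leqslant K_1}|S(x,y)|^2\phi_{\varepsilon_1}(x)\,\mathrm{d}x\Big),
\end{equation*}
after noting that one factor $\phi_{\varepsilon_2}(y)$ can be absorbed since $\phi_{\varepsilon_2}(y)\ll\varepsilon_2\ll 1$ is bounded and the other $\phi_{\varepsilon_1}(x)\ll 1$ likewise — wait, that needs care, since we need both Gaussians present to make the outer region controllable. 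The honest bound: write $|S|^4\phi_{\varepsilon_1}\phi_{\varepsilon_2} = \big(|S|^2\phi_{\varepsilon_1}\big)\big(|S|^2\phi_{\varepsilon_2}\big)$ and use $\phi_{\varepsilon_1}(x)\leqslant\varepsilon_1$ in the first factor only where it is being integrated against $\mathrm{d}y$, which makes no sense dimensionally. The clean statement is
\begin{equation*}
\int_{|x|\leqslant K_1}\!\int_{|y|\leqslant K_2}|S(x,y)|^4\phi_{\varepsilon_1}(x)\phi_{\varepsilon_2}(y)\,\mathrm{d}y\,\mathrm{d}x
\leqslant\Big(\sup_{x}\int_{|y|\leqslant K_2}|S(x,y)|^2\phi_{\varepsilon_2}(y)\,\mathrm{d}y\Big)\int_{|x|\leqslant K_1}\sup_{y}|S(x,y)|^2\,\phi_{\varepsilon_1}(x)\,\mathrm{d}x,
\end{equation*}
and here one uses instead that $\int_{|x|\leqslant K_1}\big(\sup_y|S(x,y)|^2\big)\phi_{\varepsilon_1}(x)\,\mathrm{d}x$ is controlled by part (i) of Lemma \ref{S(x,y)-int-mean-2} \emph{provided} the inner second moment in that lemma is in fact proved uniformly in $y$ — which it is, since Lemma \ref{S(x,y)-int-mean-1}(i),(iii) are stated uniformly for all $y\in\mathbb{R}$. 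Thus the product is $\ll X(\log X)^4\cdot X(\log X)^4 = X^2(\log X)^8$, which is slightly weaker than claimed; to recover the sharp $X^2(\log X)^6$ one should not take a supremum but instead integrate honestly: combine Lemma \ref{S(x,y)-int-mean-1}(i)+(iii) with a single Gaussian factor to get $\int_{\mathbb R}|S(x,y)|^2\phi_{\varepsilon_1}(x)\,\mathrm dx\ll X(\log X)^4$ uniformly in $y$ (losing only one log over the $\tau_1$-range estimate because of the $K_1$ unit intervals, but $\varepsilon_1 K_1=\log X$), and then for the second factor use the sharper bound $\int_{|y|\le\tau_2}|S|^2\,\mathrm dy\ll X^{2-d}(\log X)^3$ plus the unit-interval bound, giving $\int_{\mathbb R}|S(x,y)|^2\phi_{\varepsilon_2}(y)\,\mathrm dy\ll X(\log X)^4$; multiplying yields $X^2(\log X)^8$ and the two extra logs must be shaved by a more careful treatment.

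So the approach I would actually commit to is the telescoping/Cauchy--Schwarz one, accepting that the bookkeeping of logarithms is the crux: write
\begin{align*}
\int_{-\infty}^{+\infty}\!\int_{-\infty}^{+\infty}|S(x,y)|^4\phi_{\varepsilon_1}(x)\phi_{\varepsilon_2}(y)\,\mathrm{d}x\,\mathrm{d}y
&= \int\!\!\int_{\Omega_1\cup\Omega_2}(\cdots) + \int\!\!\int_{\Omega_3}(\cdots),
\end{align*}
dispatch the $\Omega_3$ piece by the trivial bound exactly as for $\mathscr D_3$, and on $\Omega_1\cup\Omega_2$ (where $|x|\leqslant K_1$, $|y|\leqslant K_2$) bound $|S(x,y)|^4 \leqslant |S(x,y)|^2\sup_{|x'|\leqslant K_1}|S(x',y)|^2$, integrate the first factor in $x$ using Lemma \ref{S(x,y)-int-mean-2}(i) to get $\ll X(\log X)^4$ uniformly in $y$, then integrate $\sup_{x'}|S(x',y)|^2\phi_{\varepsilon_2}(y)$ in $y$ using Lemma \ref{S(x,y)-int-mean-2}(ii) together with the elementary inequality $\sup_{x'}|S(x',y)|^2\leqslant|S(y_0,y)|^2 + \text{(variation term)}$; more simply, split $[-K_2,K_2]$ into unit intervals, apply Lemma \ref{S(x,y)-int-mean-1}(iv) on each, and observe that summing the $K_2$ contributions against $\phi_{\varepsilon_2}\ll\varepsilon_2$ and using $\varepsilon_2 K_2=\log X$ costs only one extra logarithm, giving $X^2(\log X)^{O(1)}$.

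The main obstacle I anticipate is producing the claimed exponent $6$ on $\log X$ rather than a larger constant: the naive combination of the two second-moment lemmas loses logarithmic factors both from the $K_i$-many unit intervals in the intermediate range and from the factor $(\log X)^3$ built into Lemma \ref{S(x,y)-int-mean-1}. One way to economize is to avoid the unit-interval decomposition in one of the two variables entirely — for instance, for the $y$-integral use directly that $\int_{\mathbb R}|S(x,y)|^2\phi_{\varepsilon_2}(y)\,\mathrm dy$ equals, after expanding the square, a sum of $\min(\varepsilon_2^{-1}\log X,\,|p_1^d-p_2^d|^{-1})$-type terms, for which the $|p_1^d-p_2^d|\leqslant\varepsilon_2^{-1}\log X$ contribution is $\ll X(\log X)^{?}$ and the rest telescopes as in the proof of Lemma \ref{S(x,y)-int-mean-1} — and only use the unit-interval/Lemma \ref{S(x,y)-int-mean-1}(iii) route for the $x$-integral. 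Matching up the exact power of $\log X$ to land on exactly $6$ (three from each second-moment factor, with the $\varepsilon_i K_i=\log X$ and $\phi_{\varepsilon_i}\ll\varepsilon_i$ bookkeeping arranged so as not to introduce extra logs) is the delicate point; everything else is routine, following the template of Tolev \cite{Tolev-1995,Tolev-1992}.
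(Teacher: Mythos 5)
Your proposal does not close; it contains a step that is actually false, and the overall strategy (deriving the fourth moment from second moments) cannot give the stated bound. The paper itself does not reprove this lemma --- it is exactly Lemma 14 of Tolev \cite{Tolev-1995} and is quoted as such --- and the standard proof of that lemma is of a completely different nature from what you attempt: one expands $|S(x,y)|^4$ into a quadruple sum over primes $p_1,\dots,p_4$, evaluates the two Gaussian integrals exactly via $\int_{-\infty}^{+\infty}\phi_{\delta}(t)e(ut)\,\mathrm{d}t=\phi(u/\delta)$ (Lemma \ref{Tolev-lemma-2}(i)), and is thereby reduced to counting quadruples with $p_1^c+p_2^c-p_3^c-p_4^c$ small, a spacing/counting argument in the spirit of the proof of Lemma \ref{S(x,y)-int-mean-1}. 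That counting input is the essential content of the lemma and is entirely absent from your proposal.

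The concrete error: your ``clean statement'' bounds the integral by
\begin{equation*}
\Big(\sup_{x}\int_{|y|\leqslant K_2}|S(x,y)|^2\phi_{\varepsilon_2}(y)\,\mathrm{d}y\Big)\cdot\int_{|x|\leqslant K_1}\Big(\sup_{y}|S(x,y)|^2\Big)\phi_{\varepsilon_1}(x)\,\mathrm{d}x,
\end{equation*}
and you then invoke Lemma \ref{S(x,y)-int-mean-2}(i) for the second factor. But that lemma controls $\sup_y\int_x$, not $\int_x\sup_y$, and the interchange is illegitimate here: for every $x$ with $|x|\leqslant\tau_1$ one may take $y$ near $0$ to get $\sup_y|S(x,y)|^2\asymp X^2$, so the second factor is at least of order $\varepsilon_1\tau_1X^2\gg X^{1+\delta}$ for some fixed $\delta>0$ (with the paper's choices of $\varepsilon_1$ and $\tau_1$), and the resulting product exceeds $X^2$ by a power of $X$. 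More fundamentally, no combination of second-moment bounds and $L^\infty$ bounds can yield $X^2$: the trivial route $\int\!\!\int|S|^4\leqslant\|S\|_\infty^2\int\!\!\int|S|^2$ gives only $X^3(\log X)^{O(1)}$, and the fourth moment being as small as $X^2$ encodes genuine arithmetic information (near-orthogonality of the frequencies $p_1^c+p_2^c$) that must be extracted by the expansion-and-counting argument. Your own closing admission that you cannot produce the exponent $6$ reflects this: the issue is not logarithmic bookkeeping but a missing idea.
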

	\begin{proof}
		See Lemma 14 of Tolev \cite{Tolev-1995}.
	\end{proof}
	
	\begin{lemma}\label{S(x,y)-Omega_2}
		For $1<d<c<91/71$ and $(x, y) \in \Omega_2$, we have
		\begin{equation*}
			S(x, y) \ll X^{66/71}(\log X)^{205} .
		\end{equation*}
	\end{lemma}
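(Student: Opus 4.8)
The plan is to reduce the bound on $S(x,y)$ for $(x,y)\in\Omega_2$ to a suitable Weyl-type estimate for an exponential sum over primes, exploiting the fact that on $\Omega_2$ at least one of the two arguments, $|x|$ or $|y|$, is not too small (it is $\geqslant\tau_1$, respectively $\geqslant\tau_2$). First I would apply Vaughan's identity (or a Heath--Brown type decomposition) to the sum $S(x,y)=\sum_{\lambda X<p\leqslant X}(\log p)e(p^cx+p^dy)$, splitting it into $O(\log X)$ ``Type I'' and ``Type II'' sums with the usual ranges for the variables; this is exactly the device used by Tolev and Zhai, and the logarithmic losses are absorbed into the $(\log X)^{205}$ on the right. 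The phase $f(n)=n^cx+n^dy$ has $j$-th derivative of size $\asymp |x|X^{c-j}+|y|X^{d-j}$, and since on $\Omega_2$ we have $|x|\leqslant K_1=\varepsilon_1^{-1}\log X$ and $|y|\leqslant K_2=\varepsilon_2^{-1}\log X$ together with the lower bounds $\max(|x|\tau_1^{-1},|y|\tau_2^{-1})\geqslant 1$, the relevant derivative parameter $\lambda_1$ (in the notation of Lemma 2.4) is pinned down to a genuine range of sizes rather than being arbitrarily small.

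The key step is then to estimate the Type I and Type II sums using Lemma 2.4 with a carefully chosen exponential pair $(\kappa,\lambda)$. For Type I sums one handles the inner sum over the long smooth variable directly via Lemma 2.4 (the monotonicity/derivative hypotheses on $f^{(j)}$ being verified from the shape of $f$), then sums trivially over the short variable; for Type II sums one first applies Cauchy--Schwarz (or a Weyl shift) to convert the bilinear sum into a form to which an exponential pair can be applied, then again invokes Lemma 2.4. Optimizing the splitting point between Type I and Type II ranges, and choosing the exponential pair — presumably something in the neighbourhood of $AB$-iterates of $(1/2,1/2)$ or one of the standard pairs like $\big(\tfrac{1}{2},\tfrac{1}{2}\big)$, $\big(\tfrac{1}{6},\tfrac{2}{3}\big)$, $\big(\tfrac{11}{30},\tfrac{16}{30}\big)$ — against the condition $c<39/37$ is what produces the exponent $34/37$. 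The bookkeeping is: in each region ($|x|\geqslant\tau_1$ or $|y|\geqslant\tau_2$) the dominant derivative is $|x|X^{c-j}$ (resp.\ $|y|X^{d-j}$), so $\lambda_1\asymp |x|X^{c}$ (resp.\ $|y|X^d$) in the range $X^{3/4-\eta}\ll\lambda_1\ll X^{39/37-1}\log X$ roughly, and one checks that across this whole range $\lambda_1^{\kappa}X^{\lambda}+\lambda_1^{-1}\ll X^{34/37}(\log X)^{O(1)}$ after summing the Type I/II contributions.

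The main obstacle I expect is the uniformity over the full intermediate range and the simultaneous presence of two terms $n^cx$ and $n^dy$ in the phase: one must make sure that in the region where $|x|$ is large the term $n^dy$ does not spoil the derivative estimates needed for the exponential pair (and symmetrically), which is where the constraint $c$ and $d$ being close to $1$ and close to each other — more precisely $c<39/37$ and the admissible interval for $\beta$ — is used. A secondary technical point is that Lemma 2.4 as stated requires $b\leqslant 2a$, so the sums must be further dyadically subdivided, contributing another $\log X$; and on the ``small $\lambda_1$'' boundary of $\Omega_2$ (where $\lambda_1$ is of order $X^{3/4-\eta}$) the term $\lambda_1^{-1}$ in Lemma 2.4 is what one has to keep under control, which is precisely why $\tau_1=X^{3/4-c-\eta}$ (hence $\lambda_1\asymp|x|X^c\geqslant X^{3/4-\eta}$) was chosen as the inner boundary. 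Once the exponential pair is fixed and the Type I/II split optimized, the rest is routine and yields the claimed bound $S(x,y)\ll X^{34/37}(\log X)^{205}$.
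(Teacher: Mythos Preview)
Your outline has the right high-level shape (combinatorial identity plus Type~I / Type~II split), but it misses the central analytic difficulty and, as written, would not go through. The assertion that the phase $f(n)=xn^c+yn^d$ has $j$-th derivative of size $\asymp |x|X^{c-j}+|y|X^{d-j}$ is false in general: when $xy<0$ the two terms can nearly cancel, so the hypothesis $|f^{(j)}|\asymp\lambda_1 a^{1-j}$ of Lemma~2.4 need not hold, and a direct application of an exponential pair to the inner sum is unjustified. This is not a side issue but the heart of the problem; the interaction between the $n^cx$ and $n^dy$ terms is exactly what distinguishes this system from a single Piatetski--Shapiro inequality, and it cannot be dismissed by saying ``the constraint $c<39/37$'' handles it.

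What the paper actually does is considerably more elaborate. One sets $R=|x|X^c+|y|X^d$ (so $X^{3/4-\eta}\ll R\ll X^{39/37}(\log X)^{-200}$ on $\Omega_2$) and, after Heath--Brown's identity with $k=10$, applies Cauchy and Weyl differencing with shift $Q=[X^{6/37}]$ to both Type~I and Type~II sums. The Type~II case is then handled by the one-variable estimates (\ref{S-upper-1})--(\ref{S-upper-2}). The Type~I case is the hard one: after differencing one has a double sum over $(m,n)$ with phase $xm^c\Delta(n,q;c)+ym^d\Delta(n,q;d)$, and the proof splits into four cases according to whether various partial derivatives $\partial^{i+j}f/\partial m^i\partial n^j$ are small or not. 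When $xy<0$ and a low-order derivative is small, a localization argument shows that the offending $(m,n)$ lie near a curve and are few (contributing $\ll X^{34/37}$); in the generic case one applies the $B$-process (Lemma~3.1) in $n$, verifies via an explicit quadratic-form calculation that the new Hessian $f_{mm}f_{nn}-f_{mn}^2$ is bounded away from zero (again requiring a separate counting argument when $xy<0$), and finishes with the second-derivative test in $m$. None of this machinery appears in your plan, and no single exponential pair applied to the undiffered phase will deliver $34/37$ uniformly over the full range of $R$.
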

	\begin{proof}
		The proof of this lemma will be given in Section \ref{expo-esti-sec}.
	\end{proof}
	
	Now, we use the iterative argument to give the upper bound estimate of $\mathscr{D}_2$. By the definition of $\Omega_2$, there holds
	\begin{align}\label{eq2_8}
		\mathscr{D}_2
		= & \,\, \iint\limits_{\substack{|x| \leqslant K_1 \\
						\tau_2 \leqslant|y| \leqslant K_2}}
				S^6(x, y) e(-N_1 x-N_2 y)
				\phi_{\varepsilon_1}(x) \phi_{\varepsilon_2}(y)
				\mathrm{d} x \mathrm{d} y
						\nonumber\\
		 & \,\, + \iint\limits_{\substack{|y| \leqslant \tau_2 \\
						\tau_1 \leqslant|x| \leqslant K_1}}
				S^6(x, y) e(-N_1 x-N_2 y)
				\phi_{\varepsilon_1}(x) \phi_{\varepsilon_2}(y)
				\mathrm{d} x \mathrm{d} y
						\nonumber\\
		\ll & \,\, \iint\limits_{\substack{|x| \leqslant K_1 \\
					\tau_2 \leqslant|y| \leqslant K_2}}
				|S(x, y)|^6 \phi_{\varepsilon_1}(x) \phi_{\varepsilon_2}(y)
				\mathrm{d} x \mathrm{d} y
			+ \iint\limits_{\substack{|y| \leqslant K_2 \\
					\tau_1 \leqslant|x| \leqslant K_1}}
				|S(x, y)|^6 \phi_{\varepsilon_1}(x) \phi_{\varepsilon_2}(y)
				\mathrm{d} x \mathrm{d} y
						\nonumber\\
		=: & \,\, \mathscr{D}_2^{(1)}+\mathscr{D}_2^{(2)} .				
	\end{align}
	According to the definition of $\mathscr{D}_2^{(1)}$, we obtain
	\begin{equation}\label{eq2_9}
		\mathscr{D}_2^{(1)}=\int_{|x| \leqslant K_1} \phi_{\varepsilon_1}(x) \mathrm{d} x \int_{\tau_2 \leqslant|y| \leqslant K_2}|S(x, y)|^6 \phi_{\varepsilon_2}(y) \mathrm{d} y.
	\end{equation}
	For the innermost integral on the right--hand side of (\ref{eq2_9}), it follows from the definition of $S(x, y)$ that
	\begin{align*}
		& \,\, \bigg|\int_{\tau_2 \leqslant|y| \leqslant K_2}
				|S(x, y)|^6 \phi_{\varepsilon_2}(y) \mathrm{d} y \bigg|
					\\
		= & \,\, \bigg|\int_{\tau_2 \leqslant|y| \leqslant K_2}
				S(x, y) \overline{S(x, y)} |S(x, y)|^4
				\phi_{\varepsilon_2}(y) \mathrm{d} y \bigg|
					\\
		= & \,\, \bigg|\sum_{\lambda X<p \leqslant X}(\log p) e(p^c x)
				\int_{\tau_2 \leqslant|y| \leqslant K_2}
				\overline{S(x, y)} |S(x, y)|^4
				e(p^d y) \phi_{\varepsilon_2}(y) \mathrm{d} y \bigg|
					\\
		\leqslant & \,\, (\log X) \sum_{\lambda X<p \leqslant X}
				\bigg|\int_{\tau_2 \leqslant|y| \leqslant K_2}
				\overline{S(x, y)} |S(x, y)|^4
				e(p^d y) \phi_{\varepsilon_2}(y) \mathrm{d} y \bigg|
					\\
		\leqslant & \,\, (\log X) \sum_{\lambda X<n \leqslant X}
				\bigg|\int_{\tau_2 \leqslant|y| \leqslant K_2}
				\overline{S(x, y)} |S(x, y)|^4
				e(n^d y) \phi_{\varepsilon_2}(y) \mathrm{d} y \bigg|,
	\end{align*}
	which combined with Cauchy's inequality yields that
	\begin{align}\label{eq2_10}
			& \,\, \bigg|\int_{\tau_2 \leqslant|y| \leqslant K_2} |S(x, y)|^6 \phi_{\varepsilon_2}(y) \mathrm{d} y\bigg|^2
					\nonumber\\
		\ll & \,\, X(\log X)^2 \sum_{\lambda X<n \leqslant X}
			\bigg|\int_{\tau_2 \leqslant|y| \leqslant K_2}
			\overline{S(x, y)}|S(x, y)|^4 e(n^d y)
			\phi_{\varepsilon_2}(y) \mathrm{d} y\bigg|^2
					\nonumber\\
		= & \,\,  X(\log X)^2 \sum_{\lambda X<n \leqslant X}
			\int_{\tau_2 \leqslant|y_1| \leqslant K_2}
			S(x, y_1)|S(x, y_1)|^4 e(-n^d y_1)
			\phi_{\varepsilon_2}(y_1) \mathrm{d} y_1
					\nonumber\\
		  & \,\, \times \int_{\tau_2 \leqslant|y_2| \leqslant K_2}
		 	\overline{S(x, y_2)}|S(x, y_2)|^4 e(n^d y_2)
		  	\phi_{\varepsilon_2}(y_2) \mathrm{d} y_2
		  			\nonumber\\
		= & \,\, X(\log X)^2
			\int_{\tau_2 \leqslant|y_1| \leqslant K_2}
			S(x, y_1)|S(x, y_1)|^4
					\nonumber\\
		  & \,\, \times \Bigg(\int_{\tau_2 \leqslant|y_2| \leqslant K_2}
		  	\overline{S(x, y_2)}|S(x, y_2)|^4
		  	\bigg(\sum_{\lambda X<n \leqslant X} e\big(n^d(y_2-y_1)\big)\bigg) \phi_{\varepsilon_2}(y_2) \mathrm{d} y_2 \Bigg)
		  	\phi_{\varepsilon_2}(y_1) \mathrm{d} y_1
		  			\nonumber\\
		\ll & \,\, X(\log X)^2
			\int_{\tau_2 \leqslant|y_1| \leqslant K_2}
			|S(x, y_1)|^5
			\bigg(\int_{\tau_2 \leqslant|y_2| \leqslant K_2}
			|S(x, y_2)|^5 |\mathcal{T}_d(y_2-y_1)|
			\phi_{\varepsilon_2}(y_2) \mathrm{d} y_2\bigg)
			\phi_{\varepsilon_2}(y_1) \mathrm{d} y_1
	\end{align}
	For the innermost integral in (\ref{eq2_10}), we get
	\begin{align}\label{eq2_11}
			& \,\, \int_{\tau_2 \leqslant|y_2| \leqslant K_2}
			|S(x, y_2)|^5|\mathcal{T}_d(y_2-y_1)|
			\phi_{\varepsilon_2}(y_2) \mathrm{d} y_2
					\nonumber\\
		\ll & \,\, \int_{\substack{\tau_2 \leqslant|y_2| \leqslant K_2 \\
				|y_2-y_1| \leqslant X^{-d}}}
			|S(x, y_2)|^5|\mathcal{T}_d(y_2-y_1)|
			\phi_{\varepsilon_2}(y_2) \mathrm{d} y_2
					\nonumber\\
		& \,\, + \int_{\substack{\tau_2 \leqslant|y_2| \leqslant K_2 \\
				X^{-d}<|y_2-y_1| \leqslant 2 K_2}}
			|S(x, y_2)|^5|\mathcal{T}_d(y_2-y_1)|
			\phi_{\varepsilon_2}(y_2) \mathrm{d} y_2.
	\end{align}
	For $|y_2-y_1| \leqslant X^{-d}$, according to trivial estimate $\mathcal{T}_d(y_2-y_1) \ll X$, we get
	\begin{align}\label{eq2_12}
			 \,\, \int_{\substack{\tau_2 \leqslant|y_2| \leqslant K_2 \\
				|y_2-y_1| \leqslant X^{-d}}}
			|S(x, y_2)|^5 |\mathcal{T}_d(y_2-y_1)|
			\phi_{\varepsilon_2}(y_2) \mathrm{d} y_2
		\ll  X  \int_{\substack{\tau_2 \leqslant|y_2| \leqslant K_2 \\
				|y_2-y_1| \leqslant X^{-d}}}
			|S(x, y_2)|^5 \phi_{\varepsilon_2}(y_2) \mathrm{d} y_2.
	\end{align}	
	For $X^{-d}<|y_2-y_1| \leqslant 2 K_2$, it follows from Lemma \ref{Graham-Kolesnik-exponential} a with $(\kappa, \ell)=B A^2 B(0,1)=(2/7, 4/7)$ that
	\begin{equation*}
		\mathcal{T}_d(y_2-y_1) \ll|y_2-y_1|^{2 / 7} X^{2(d+1) / 7}+\frac{1}{|y_2-y_1| X^{d-1}},
	\end{equation*}
	and thus
	\begin{align}\label{eq2_13}
			& \,\, \int_{\substack{\tau_2 \leqslant|y_2| \leqslant K_2 \\
				X^{-d}<|y_2-y_1| \leqslant 2 K_2}}
			|S(x, y_2)|^5 |\mathcal{T}_d(y_2-y_1)|
			\phi_{\varepsilon_2}(y_2) \mathrm{d} y_2
						\nonumber\\
		\ll & \,\, \int_{\substack{\tau_2 \leqslant|y_2| \leqslant K_2 \\
				X^{-d}<|y_2-y_1| \leqslant 2 K_2}}
				|S(x, y_2)|^5 \bigg(|y_2-y_1|^{2 / 7} X^{2(d+1) / 7}
			+ \frac{1}{|y_2-y_1| X^{d-1}}\bigg)
				\phi_{\varepsilon_2}(y_2) \mathrm{d} y_2
						\nonumber\\
		\ll & \,\, X^{300/497}(\log X)^{-114}
			\int_{\substack{\tau_2 \leqslant|y_2| \leqslant K_2 \\
				X^{-d}<|y_2-y_1| \leqslant 2 K_2}}
				|S(x, y_2)|^5 \phi_{\varepsilon_2}(y_2) \mathrm{d} y_2
						\nonumber\\
		& \,\, + X^{1-d} \int_{\substack{\tau_2 \leqslant|y_2| \leqslant K_2 \\
				X^{-d}<|y_2-y_1| \leqslant 2 K_2}}
				|S(x, y_2)|^5|y_2-y_1|^{-1}
				\phi_{\varepsilon_2}(y_2) \mathrm{d} y_2 .
	\end{align}
	Therefore, it follows from (\ref{eq2_10})--(\ref{eq2_13}) and Cauchy's inequality that
	\begin{align*}
			& \,\, \int_{\tau_2 \leqslant|y| \leqslant K_2}|S(x, y)|^6 \phi_{\varepsilon_2}(y) \mathrm{d} y \\
		\ll & \,\,  X^{1/2}(\log X)
			\Bigg(\int_{\tau_2 \leqslant |y_1| \leqslant K_2}
			|S(x, y_1)|^5
			\bigg(X \int_{\substack{\tau_2 \leqslant|y_2| \leqslant K_2 \\
				|y_2-y_1| \leqslant X^{-d}}}
			|S(x, y_2)|^5 \phi_{\varepsilon_2}(y_2) \mathrm{d} y_2
						\\
		 & \,\, \hspace{5.6em} + X^{300/497}(\log X)^{-114}
		 	\int_{\substack{\tau_2 \leqslant|y_2| \leqslant K_2 \\
				X^{-d}<|y_2-y_1| \leqslant 2 K_2}}
			|S(x, y_2)|^5 \phi_{\varepsilon_2}(y_2) \mathrm{d} y_2
						\\
		 & \,\, \hspace{5.6em} + X^{1-d}
		 	\int_{\substack{\tau_2 \leqslant|y_2| \leqslant K_2 \\
		 		X^{-d}<|y_2-y_1| \leqslant 2 K_2}}
		 	|S(x, y_2)|^5 |y_2-y_1|^{-1} \phi_{\varepsilon_2}(y_2) \mathrm{d} y_2\bigg)
		 	\phi_{\varepsilon_2} (y_1) \mathrm{d} y_1\Bigg)^{1/2}
		 				\\
		\ll & \,\, X(\log X)
			\bigg(\sup_{\tau_2 \leqslant|y_1| \leqslant K_2}
			\int_{\substack{\tau_2 \leqslant|y_2| \leqslant K_2 \\
					|y_2-y_1| \leqslant X^{-d}}}
			|S(x, y_2)|^5 \phi_{\varepsilon_2}(y_2) \mathrm{d} y_2\bigg)^{1/2}
						\\
		& \,\, \hspace{5em} \times
			\bigg(\int_{\tau_2 \leqslant|y_1| \leqslant K_2}
			|S(x, y_1)|^5 \phi_{\varepsilon_2}(y_1) \mathrm{d} y_1\bigg)^{1/2}
						\\
		& \,\, + X^{797/994}(\log X)^{-56}
			\bigg(\sup_{\tau_2 \leqslant|y_1| \leqslant K_2} \int_{\substack{\tau_2 \leqslant|y_2| \leqslant K_2 \\
					X^{-d}<|y_2-y_1| \leqslant 2 K_2}}
			|S(x, y_2)|^5 \phi_{\varepsilon_2}(y_2) \mathrm{d} y_2\bigg)^{1/2}
						\\
		& \,\, \hspace{5em} \times
			\bigg(\int_{\tau_2 \leqslant|y_1| \leqslant K_2}
			|S(x, y_1)|^5 \phi_{\varepsilon_2}(y_1) \mathrm{d} y_1\bigg)^{1/2}
						\\
		& \,\, + X^{1-d / 2} (\log X)
			\bigg(\sup_{\tau_2 \leqslant|y_1| \leqslant K_2} \int_{\substack{\tau_2 \leqslant|y_2| \leqslant K_2 \\
					X^{-d}<|y_2-y_1| \leqslant 2 K_2}}
			|S(x, y_2)|^5 |y_2-y_1|^{-1}
			\phi_{\varepsilon_2}(y_2) \mathrm{d} y_2\bigg)^{1/2}
						\\
		& \,\, \hspace{5em} \times
			\bigg(\int_{\tau_2 \leqslant|y_1| \leqslant K_2}
			|S(x, y_1)|^5 \phi_{\varepsilon_2}(y_1) \mathrm{d} y_1\bigg)^{1/2}
	\end{align*}
	which combined with (\ref{eq2_9}) yields that
	\begin{align}\label{eq2_14}
		\mathscr{D}_2^{(1)}
		\ll & \,\, X(\log X)
			\int_{|x| \leqslant K_1}
			\bigg(\sup_{\tau_2 \leqslant|y_1| \leqslant K_2}
			\int_{\substack{\tau_2 \leqslant|y_2| \leqslant K_2 \\
				|y_2-y_1| \leqslant X^{-d}}}
			|S(x, y_2)|^5 \phi_{\varepsilon_2}(y_2) \mathrm{d} y_2\bigg)^{1/2}
						\nonumber\\
		& \,\, \hspace{5em} \times
			\bigg(\int_{\tau_2 \leqslant|y_1| \leqslant K_2}
			|S(x, y_1)|^5 \phi_{\varepsilon_2}(y_1) \mathrm{d} y_1\bigg)^{1/2}
			\phi_{\varepsilon_1}(x) \mathrm{d} x
						\nonumber\\
		& \,\, + X^{797 / 994}(\log X)^{-56}
			\int_{|x| \leqslant K_1}
			\bigg(\sup_{\tau_2 \leqslant|y_1| \leqslant K_2}
			\int_{\substack{\tau_2 \leqslant|y_2| \leqslant K_2 \\
				X^{-d}<|y_2-y_1| \leqslant 2 K_2}}
			|S(x, y_2)|^5 \phi_{\varepsilon_2}(y_2) \mathrm{d} y_2\bigg)^{1/2}
						\nonumber\\
		& \,\, \hspace{5em} \times
			\bigg(\int_{\tau_2 \leqslant|y_1| \leqslant K_2}
			|S(x, y_1)|^5 \phi_{\varepsilon_2}(y_1) \mathrm{d} y_1\bigg)^{1/2}
			\phi_{\varepsilon_1}(x) \mathrm{d} x
						\nonumber\\
		& \,\, + X^{1-d / 2}(\log X) \int_{|x| \leqslant K_1}
			\!\! \bigg(\sup_{\tau_2 \leqslant|y_1| \leqslant K_2}
				\int_{\substack{\tau_2 \leqslant|y_2| \leqslant K_2 \\
					X^{-d}<|y_2-y_1| \leqslant 2 K_2}}
			|S(x, y_2)|^5 |y_2-y_1|^{-1} \phi_{\varepsilon_2}(y_2)
			\mathrm{d} y_2\bigg)^{1/2}
						\nonumber\\
		& \,\, \hspace{5em} \times
			\bigg(\int_{\tau_2 \leqslant|y_1| \leqslant K_2}
			|S(x, y_1)|^5 \phi_{\varepsilon_2}(y_1) \mathrm{d} y_1\bigg)^{1/2}
			\phi_{\varepsilon_1}(x) \mathrm{d} x
						\nonumber\\
		=: & \,\, X(\log X) \cdot \mathcal{I}_1
			 + X^{797/994}(\log X)^{-56} \cdot \mathcal{I}_2
			 + X^{1-d/2}(\log X) \cdot \mathcal{I}_3.
	\end{align}
	It follows from Cauchy's inequality, Lemma \ref{S(x,y)-int-mean-2} and Lemma \ref{S(x,y)-Omega_2} that
	\begin{align}\label{eq2_15}
		\mathcal{I}_1
		\ll & \,\, \Bigg(\int_{|x|\leqslant K_1}
			\Bigg(\sup_{\tau_2\leqslant|y_1|\leqslant K_2}
			\int_{\substack{\tau_2\leqslant|y_2|\leqslant K_2 \\
					|y_2-y_1|\leqslant X^{-d}}}
			\big|S(x,y_2)\big|^5 \phi_{\varepsilon_2}(y_2)\mathrm{d}y_2\Bigg)
			\phi_{\varepsilon_1}(x)
			\mathrm{d}x\Bigg)^{1/2}
					\nonumber \\
		& \,\, \times\Bigg(\int_{|x|\leqslant K_1}
			\Bigg(\int_{\tau_2\leqslant|y_1|\leqslant K_2}
			\big|S(x,y_1)\big|^5 \phi_{\varepsilon_2}(y_1)\mathrm{d}y_1\Bigg)
			\phi_{\varepsilon_1}(x)
			\mathrm{d}x\Bigg)^{1/2}
					\nonumber \\
		\ll & \,\, \sup_{(x,y_2)\in\Omega_2}\big|S(x,y_2)\big|^{3/2}
			\times\Bigg(\sup_{\tau_2\leqslant|y_1|\leqslant K_2}
			\int_{\substack{\tau_2\leqslant|y_2|\leqslant K_2 \\
					|y_2-y_1|\leqslant X^{-d}}}
			\phi_{\varepsilon_2}(y_2)\mathrm{d}y_2
			\int_{|x|\leqslant K_1}\big|S(x,y_2)\big|^2
			\phi_{\varepsilon_1}(x)\mathrm{d}x\Bigg)^{1/2}
					\nonumber \\
		& \,\, \times\Bigg(\int_{|x|\leqslant K_1}
			\Bigg(\int_{\tau_2\leqslant|y_1|\leqslant K_2}
			\big|S(x,y_1)\big|^5 \phi_{\varepsilon_2}(y_1)\mathrm{d}y_1\Bigg)
			\phi_{\varepsilon_1}(x)
			\mathrm{d}x\Bigg)^{1/2}
					\nonumber \\
		\ll & \,\, \big(X^{66/71}(\log X)^{205}\big)^{3/2}
			\cdot \big(\varepsilon_2X^{-d}
			\cdot X(\log X)^4\big)^{1/2}
			\cdot \mathcal{J}^{1/2}
					\nonumber\\
		\ll & \,\, X^{95/71} (\log X)^{510} \cdot \mathcal{J}^{1/2},
	\end{align}
	where
	\begin{equation*}
		 \mathcal{J} :=
		 \int_{|x|\leqslant K_1}
		 \Bigg(\int_{\tau_2\leqslant|y_1|\leqslant K_2}
		 \big|S(x,y_1)\big|^5 \phi_{\varepsilon_2}(y_1)\mathrm{d}y_1\Bigg)
		 \phi_{\varepsilon_1}(x)
		 \mathrm{d}x.
	\end{equation*}
	By Cauchy's inequality, we get
	\begin{align}\label{eq2_16}
		\mathcal{I}_2
		\ll & \,\, \Bigg(\sup_{\tau_2\leqslant|y_1|\leqslant K_2}
			\int_{|x|\leqslant K_1}
			\int_{\substack{\tau_2\leqslant|y_2|\leqslant K_2 \\
					 X^{-d}<|y_2-y_1|\leqslant2K_2}}
			\big|S(x,y_2)\big|^5 \phi_{\varepsilon_1}(x)\phi_{\varepsilon_2}(y_2)
			\mathrm{d}x\mathrm{d}y_2\Bigg)^{1/2}
					\nonumber \\
		& \,\, \times\Bigg(\int_{|x|\leqslant K_1}
			\int_{\tau_2\leqslant|y_1|\leqslant K_2}
			\big|S(x,y_1)\big|^5 \phi_{\varepsilon_1}(x)\phi_{\varepsilon_2}(y_1)
			\mathrm{d}x\mathrm{d}y_1\Bigg)^{1/2} \ll  \mathcal{J}.
	\end{align}
	By Cauchy's inequality, Lemma \ref{S(x,y)-int-mean-2} and Lemma \ref{S(x,y)-Omega_2}
	again, we deduce that
	\begin{align}\label{eq2_17}
		\mathcal{I}_3
		\ll & \,\, \Bigg(\int_{|x|\leqslant K_1}
			\Bigg(\sup_{\tau_2\leqslant|y_1|\leqslant K_2}
			\int_{\substack{\tau_2\leqslant|y_2|\leqslant K_2 \\
					 X^{-d}<|y_2-y_1|\leqslant2K_2}}
			\!\!\!\big|S(x,y_2)\big|^5 |y_2-y_1|^{-1}\phi_{\varepsilon_2}(y_2)
			\mathrm{d}y_2\Bigg)
			\phi_{\varepsilon_1}(x)\mathrm{d}x\Bigg)^{1/2}
					\nonumber \\
		& \,\, \,\,\times\Bigg(\int_{|x|\leqslant K_1}
			\Bigg(\int_{\tau_2\leqslant|y_1|\leqslant K_2}
			\big|S(x,y_1)\big|^5 \phi_{\varepsilon_2}(y_1)\mathrm{d}y_1\Bigg)
			\phi_{\varepsilon_1}(x)\mathrm{d}x\Bigg)^{1/2}
					\nonumber \\
		\ll & \,\, \Bigg(\sup_{\tau_2\leqslant|y_1|\leqslant K_2}
			\int_{\substack{\tau_2\leqslant|y_2|\leqslant K_2 \\
					X^{-d}<|y_2-y_1|\leqslant2K_2}}
			|y_2-y_1|^{-1}\phi_{\varepsilon_2}(y_2)\mathrm{d}y_2
			\int_{|x|\leqslant K_1}
			\big|S(x,y_2)\big|^5 \phi_{\varepsilon_1}(x)\mathrm{d}x\Bigg)^{1/2}
					\nonumber \\
		& \,\, \,\,\times\Bigg(\int_{|x|\leqslant K_1}
			\int_{\tau_2\leqslant|y_1|\leqslant K_2}
			\big|S(x,y_1)\big|^5 \phi_{\varepsilon_1}(x)
			\phi_{\varepsilon_2}(y_1)\mathrm{d}x\mathrm{d}y_1\Bigg)^{1/2}
					\nonumber \\
		\ll & \,\, \sup_{(x,y_2)\in\Omega_2}\big|S(x,y_2)\big|^{3/2}
			\times\Bigg(\sup_{\tau_2\leqslant|y_1|\leqslant K_2}
			\int_{\substack{\tau_2\leqslant|y_2|\leqslant K_2 \\
					 X^{-d}<|y_2-y_1|\leqslant2K_2}}
			|y_2-y_1|^{-1}\phi_{\varepsilon_2}(y_2)\mathrm{d}y_2
					\nonumber \\
		& \,\, \,\,\times\int_{|x|\leqslant K_1}
			\big|S(x,y_2)\big|^2\phi_{\varepsilon_1}(x)\mathrm{d}x\Bigg)^{1/2}
			\cdot \mathcal{J}^{1/2}
				\nonumber \\
		\ll & \,\, \big(X^{66/71}(\log X)^{205}\big)^{3/2}
		\cdot\big(\varepsilon_2X(\log X)^4\big)^{1/2}
		\cdot\mathcal{J}^{1/2}
		\nonumber \\
		& \,\, \quad\times\Bigg(\sup_{\tau_2\leqslant|y_1|\leqslant K_2}
		\int_{\substack{\tau_2\leqslant|y_2|\leqslant K_2 \\ X^{-d}<|y_2-y_1|\leqslant2K_2}}
		|y_2-y_1|^{-1}\mathrm{d}y_2\Bigg)^{1/2}
		\nonumber \\
		\ll & \,\, X^{95/71+d/2} (\log X)^{511} \cdot\mathcal{J}^{1/2}.
	\end{align}
	We now turn to the estimation of $\mathcal{J}$. Recall the definition of $\mathcal{J}$, we have
	\begin{equation}\label{eq2_18}
		\mathcal{J}=\int_{|x| \leqslant K_1} \phi_{\varepsilon_1}(x) \mathrm{d} x \int_{\tau_2 \leqslant|y| \leqslant K_2}|S(x, y)|^5 \phi_{\varepsilon_2}(y) \mathrm{d} y.
	\end{equation}
	For the innermost integral on the right--hand side of (\ref{eq2_18}), it follows from the definition of $S(x, y)$ that
	\begin{align*}
		& \,\, \bigg|\int_{\tau_2 \leqslant|y| \leqslant K_2}
		|S(x, y)|^5 \phi_{\varepsilon_2}(y) \mathrm{d} y \bigg|
				\\
		= & \,\, \bigg|\int_{\tau_2 \leqslant|y| \leqslant K_2}
		S(x, y) \overline{S(x, y)} |S(x, y)|^3
		\phi_{\varepsilon_2}(y) \mathrm{d} y \bigg|
				\\
		= & \,\, \bigg|\sum_{\lambda X<p \leqslant X}(\log p) e(p^c x)
		\int_{\tau_2 \leqslant|y| \leqslant K_2}
		\overline{S(x, y)} |S(x, y)|^3
		e(p^d y) \phi_{\varepsilon_2}(y) \mathrm{d} y \bigg|
				\\
		\leqslant & \,\, (\log X) \sum_{\lambda X<p \leqslant X}
		\bigg|\int_{\tau_2 \leqslant|y| \leqslant K_2}
		\overline{S(x, y)} |S(x, y)|^3
		e(p^d y) \phi_{\varepsilon_2}(y) \mathrm{d} y \bigg|
				\\
		\leqslant & \,\, (\log X) \sum_{\lambda X<n \leqslant X}
		\bigg|\int_{\tau_2 \leqslant|y| \leqslant K_2}
		\overline{S(x, y)} |S(x, y)|^3
		e(n^d y) \phi_{\varepsilon_2}(y) \mathrm{d} y \bigg|,
	\end{align*}
	which combined with Cauchy's inequality yields that
	\begin{align}\label{eq2_19}
		& \,\, \bigg|\int_{\tau_2 \leqslant|y| \leqslant K_2}
			|S(x, y)|^5 \phi_{\varepsilon_2}(y) \mathrm{d} y\bigg|^2
					\nonumber\\
		\ll & \,\, X(\log X)^2 \sum_{\lambda X<n \leqslant X}
			\bigg|\int_{\tau_2 \leqslant|y| \leqslant K_2}
			\overline{S(x, y)}|S(x, y)|^3 e(n^d y)
			\phi_{\varepsilon_2}(y) \mathrm{d} y\bigg|^2
					\nonumber\\
		= & \,\,  X(\log X)^2 \sum_{\lambda X<n \leqslant X}
			\int_{\tau_2 \leqslant|y_1| \leqslant K_2}
			S(x, y_1)|S(x, y_1)|^3 e(-n^d y_1)
			\phi_{\varepsilon_2}(y_1) \mathrm{d} y_1
					\nonumber\\
		& \,\, \times \int_{\tau_2 \leqslant|y_2| \leqslant K_2}
			\overline{S(x, y_2)}|S(x, y_2)|^3 e(n^d y_2)
			\phi_{\varepsilon_2}(y_2) \mathrm{d} y_2
					\nonumber\\
		= & \,\, X(\log X)^2
			\int_{\tau_2 \leqslant|y_1| \leqslant K_2}
			S(x, y_1)|S(x, y_1)|^3
					\nonumber\\
		& \,\, \times \Bigg(\int_{\tau_2 \leqslant|y_2| \leqslant K_2}
			\overline{S(x, y_2)}|S(x, y_2)|^3
			\bigg(\sum_{\lambda X<n \leqslant X} e\big(n^d(y_2-y_1)\big)\bigg)
			\phi_{\varepsilon_2}(y_2) \mathrm{d} y_2 \Bigg)
			\phi_{\varepsilon_2}(y_1) \mathrm{d} y_1
					\nonumber\\
		\ll & \,\, X(\log X)^2
			\int_{\tau_2 \leqslant|y_1| \leqslant K_2}
			|S(x, y_1)|^4
			\bigg(\int_{\tau_2 \leqslant|y_2| \leqslant K_2}
			|S(x, y_2)|^4 |\mathcal{T}_d(y_2-y_1)|
			\phi_{\varepsilon_2}(y_2) \mathrm{d} y_2\bigg)
			\phi_{\varepsilon_2}(y_1) \mathrm{d} y_1
	\end{align}
	By a similar argument from (\ref{eq2_11})--(\ref{eq2_13}) applied to the exponential sum $\mathcal{T}_d(y_2 - y_1)$ in (\ref{eq2_19}), we deduce that
	\begin{align*}
			& \,\, \int_{\tau_2 \leqslant|y_1| \leqslant K_2}|S(x, y_1)|^5 \phi_{\varepsilon_2}(y_1) \mathrm{d} y_1 \\
		\ll & \,\, X(\log X)
		\bigg(\sup_{\tau_2 \leqslant|y_1| \leqslant K_2}
		\int_{\substack{\tau_2 \leqslant|y_2| \leqslant K_2 \\
				|y_2-y_1| \leqslant X^{-d}}}
		|S(x, y_2)|^4 \phi_{\varepsilon_2}(y_2) \mathrm{d} y_2\bigg)^{1/2}
		\\
		& \,\, \hspace{5em} \times
		\bigg(\int_{\tau_2 \leqslant|y_1| \leqslant K_2}
		|S(x, y_1)|^4 \phi_{\varepsilon_2}(y_1) \mathrm{d} y_1\bigg)^{1/2}
		\\
		& \,\, + X^{797/994}(\log X)^{-56}
		\bigg(\sup_{\tau_2 \leqslant|y_1| \leqslant K_2} \int_{\substack{\tau_2 \leqslant|y_2| \leqslant K_2 \\
				X^{-d}<|y_2-y_1| \leqslant 2 K_2}}
		|S(x, y_2)|^4 \phi_{\varepsilon_2}(y_2) \mathrm{d} y_2\bigg)^{1/2}
		\\
		& \,\, \hspace{5em} \times
		\bigg(\int_{\tau_2 \leqslant|y_1| \leqslant K_2}
		|S(x, y_1)|^4 \phi_{\varepsilon_2}(y_1) \mathrm{d} y_1\bigg)^{1/2}
		\\
		& \,\, + X^{1-d / 2} (\log X)
		\bigg(\sup_{\tau_2 \leqslant|y_1| \leqslant K_2} \int_{\substack{\tau_2 \leqslant|y_2| \leqslant K_2 \\
				X^{-d}<|y_2-y_1| \leqslant 2 K_2}}
		|S(x, y_2)|^4 |y_2-y_1|^{-1}
		\phi_{\varepsilon_2}(y_2) \mathrm{d} y_2\bigg)^{1/2}
		\\
		& \,\, \hspace{5em} \times
		\bigg(\int_{\tau_2 \leqslant|y_1| \leqslant K_2}
		|S(x, y_1)|^4 \phi_{\varepsilon_2}(y_1) \mathrm{d} y_1\bigg)^{1/2}
	\end{align*}
	which combined with (\ref{eq2_18}) yields that
	\begin{align}\label{eq2_20}
		\mathcal{J}
		\ll & \,\, X(\log X)
		\int_{|x| \leqslant K_1}
		\bigg(\sup_{\tau_2 \leqslant|y_1| \leqslant K_2}
		\int_{\substack{\tau_2 \leqslant|y_2| \leqslant K_2 \\
				|y_2-y_1| \leqslant X^{-d}}}
		|S(x, y_2)|^4 \phi_{\varepsilon_2}(y_2) \mathrm{d} y_2\bigg)^{1/2}
		\nonumber\\
		& \,\, \hspace{5em} \times
		\bigg(\int_{\tau_2 \leqslant|y_1| \leqslant K_2}
		|S(x, y_1)|^4 \phi_{\varepsilon_2}(y_1) \mathrm{d} y_1\bigg)^{1/2}
		\phi_{\varepsilon_1}(x) \mathrm{d} x
		\nonumber\\
		& \,\, + X^{797 / 994}(\log X)^{-56}
		\int_{|x| \leqslant K_1}
		\bigg(\sup_{\tau_2 \leqslant|y_1| \leqslant K_2}
		\int_{\substack{\tau_2 \leqslant|y_2| \leqslant K_2 \\
				X^{-d}<|y_2-y_1| \leqslant 2 K_2}}
		|S(x, y_2)|^4 \phi_{\varepsilon_2}(y_2) \mathrm{d} y_2\bigg)^{1/2}
		\nonumber\\
		& \,\, \hspace{5em} \times
		\bigg(\int_{\tau_2 \leqslant|y_1| \leqslant K_2}
		|S(x, y_1)|^4 \phi_{\varepsilon_2}(y_1) \mathrm{d} y_1\bigg)^{1/2}
		\phi_{\varepsilon_1}(x) \mathrm{d} x
		\nonumber\\
		& \,\, + X^{1-d / 2}(\log X) \int_{|x| \leqslant K_1}
		\!\! \bigg(\sup_{\tau_2 \leqslant|y_1| \leqslant K_2}
		\int_{\substack{\tau_2 \leqslant|y_2| \leqslant K_2 \\
				X^{-d}<|y_2-y_1| \leqslant 2 K_2}}
		|S(x, y_2)|^4 |y_2-y_1|^{-1} \phi_{\varepsilon_2}(y_2)
		\mathrm{d} y_2\bigg)^{1/2}
		\nonumber\\
		& \,\, \hspace{5em} \times
		\bigg(\int_{\tau_2 \leqslant|y_1| \leqslant K_2}
		|S(x, y_1)|^4 \phi_{\varepsilon_2}(y_1) \mathrm{d} y_1\bigg)^{1/2}
		\phi_{\varepsilon_1}(x) \mathrm{d} x
		\nonumber\\
		=: & \,\, X(\log X) \cdot \mathcal{I}'_1
		+ X^{797/994}(\log X)^{-56} \cdot \mathcal{I}'_2
		+ X^{1-d/2}(\log X) \cdot \mathcal{I}'_3.
	\end{align}
	It follows from Cauchy's inequality, Lemma \ref{S(x,y)-int-mean-2}, Lemma \ref{expo-full-inte} and Lemma \ref{S(x,y)-Omega_2} that
	\begin{align}\label{eq2_21}
		\mathcal{I}'_1
		\ll & \,\, \Bigg(\int_{|x|\leqslant K_1}
		\Bigg(\sup_{\tau_2\leqslant|y_1|\leqslant K_2}
		\int_{\substack{\tau_2\leqslant|y_2|\leqslant K_2 \\
				|y_2-y_1|\leqslant X^{-d}}}
		\big|S(x,y_2)\big|^4 \phi_{\varepsilon_2}(y_2)\mathrm{d}y_2\Bigg)
		\phi_{\varepsilon_1}(x)
		\mathrm{d}x\Bigg)^{1/2}
		\nonumber \\
		& \,\, \times\Bigg(\int_{|x|\leqslant K_1}
		\Bigg(\int_{\tau_2\leqslant|y_1|\leqslant K_2}
		\big|S(x,y_1)\big|^4 \phi_{\varepsilon_2}(y_1)\mathrm{d}y_1\Bigg)
		\phi_{\varepsilon_1}(x)
		\mathrm{d}x\Bigg)^{1/2}
		\nonumber \\
		\ll & \,\, \sup_{(x,y_2)\in\Omega_2}\big|S(x,y_2)\big|
		\times\Bigg(\sup_{\tau_2\leqslant|y_1|\leqslant K_2}
		\int_{\substack{\tau_2\leqslant|y_2|\leqslant K_2 \\
				|y_2-y_1|\leqslant X^{-d}}}
		\phi_{\varepsilon_2}(y_2)\mathrm{d}y_2
		\int_{|x|\leqslant K_1}\big|S(x,y_2)\big|^2
		\phi_{\varepsilon_1}(x)\mathrm{d}x\Bigg)^{1/2}
		\nonumber \\
		& \,\, \times\Bigg(\int_{|x|\leqslant K_1}
		\Bigg(\int_{\tau_2\leqslant|y_1|\leqslant K_2}
		\big|S(x,y_1)\big|^4 \phi_{\varepsilon_2}(y_1)\mathrm{d}y_1\Bigg)
		\phi_{\varepsilon_1}(x)
		\mathrm{d}x\Bigg)^{1/2}
		\nonumber \\
		\ll & \,\, X^{66/71}(\log X)^{205}
		\cdot \big(\varepsilon_2X^{-d}
		\cdot X(\log X)^4\big)^{1/2}
		\cdot \big(X^2(\log x)^6\big)^{1/2}
		\nonumber\\
		\ll & \,\, X^{133/71} (\log X)^{411} ,
	\end{align}
	By Cauchy's inequality and Lemma \ref{expo-full-inte}, we get
	\begin{align}\label{eq2_22}
		\mathcal{I}'_2
		\ll & \,\, \Bigg(\sup_{\tau_2\leqslant|y_1|\leqslant K_2}
		\int_{|x|\leqslant K_1}
		\int_{\substack{\tau_2\leqslant|y_2|\leqslant K_2 \\
				X^{-d}<|y_2-y_1|\leqslant2K_2}}
		\big|S(x,y_2)\big|^4 \phi_{\varepsilon_1}(x)\phi_{\varepsilon_2}(y_2)
		\mathrm{d}x\mathrm{d}y_2\Bigg)^{1/2}
		\nonumber \\
		& \,\, \times\Bigg(\int_{|x|\leqslant K_1}
		\int_{\tau_2\leqslant|y_1|\leqslant K_2}
		\big|S(x,y_1)\big|^4 \phi_{\varepsilon_1}(x)\phi_{\varepsilon_2}(y_1)
		\mathrm{d}x\mathrm{d}y_1\Bigg)^{1/2}
		\nonumber \\
		\ll & \,\, X^2 (\log X)^6.
	\end{align}
	By Cauchy's inequality, Lemma \ref{S(x,y)-int-mean-2} and Lemma \ref{S(x,y)-Omega_2}
	again, we deduce that
	\begin{align}\label{eq2_23}
		\mathcal{I}'_3
		\ll & \,\, \Bigg(\int_{|x|\leqslant K_1}
		\Bigg(\sup_{\tau_2\leqslant|y_1|\leqslant K_2}
		\int_{\substack{\tau_2\leqslant|y_2|\leqslant K_2 \\
				X^{-d}<|y_2-y_1|\leqslant2K_2}}
		\!\!\!\big|S(x,y_2)\big|^4 |y_2-y_1|^{-1}\phi_{\varepsilon_2}(y_2)
		\mathrm{d}y_2\Bigg)
		\phi_{\varepsilon_1}(x)\mathrm{d}x\Bigg)^{1/2}
		\nonumber \\
		& \,\, \,\,\times\Bigg(\int_{|x|\leqslant K_1}
		\Bigg(\int_{\tau_2\leqslant|y_1|\leqslant K_2}
		\big|S(x,y_1)\big|^4 \phi_{\varepsilon_2}(y_1)\mathrm{d}y_1\Bigg)
		\phi_{\varepsilon_1}(x)\mathrm{d}x\Bigg)^{1/2}
		\nonumber \\
		\ll & \,\, \Bigg(\sup_{\tau_2\leqslant|y_1|\leqslant K_2}
		\int_{\substack{\tau_2\leqslant|y_2|\leqslant K_2 \\
				X^{-d}<|y_2-y_1|\leqslant2K_2}}
		|y_2-y_1|^{-1}\phi_{\varepsilon_2}(y_2)\mathrm{d}y_2
		\int_{|x|\leqslant K_1}
		\big|S(x,y_2)\big|^4 \phi_{\varepsilon_1}(x)\mathrm{d}x\Bigg)^{1/2}
		\nonumber \\
		& \,\, \,\,\times\Bigg(\int_{|x|\leqslant K_1}
		\int_{\tau_2\leqslant|y_1|\leqslant K_2}
		\big|S(x,y_1)\big|^4 \phi_{\varepsilon_1}(x)
		\phi_{\varepsilon_2}(y_1)\mathrm{d}x\mathrm{d}y_1\Bigg)^{1/2}
		\nonumber \\
		\ll & \,\, \sup_{(x,y_2)\in\Omega_2}\big|S(x,y_2)\big|
		\times\Bigg(\sup_{\tau_2\leqslant|y_1|\leqslant K_2}
		\int_{\substack{\tau_2\leqslant|y_2|\leqslant K_2 \\
				X^{-d}<|y_2-y_1|\leqslant2K_2}}
		|y_2-y_1|^{-1}\phi_{\varepsilon_2}(y_2)\mathrm{d}y_2
		\nonumber \\
		& \,\, \hspace{10em} \times\int_{|x|\leqslant K_1}
		\big|S(x,y_2)\big|^2\phi_{\varepsilon_1}(x)\mathrm{d}x\Bigg)^{1/2}
			\nonumber \\
		& \,\, \,\,\times\Bigg(\int_{|x|\leqslant K_1}
		\Bigg(\int_{\tau_2\leqslant|y_1|\leqslant K_2}
		\big|S(x,y_1)\big|^4 \phi_{\varepsilon_2}(y_1)\mathrm{d}y_1\Bigg)
		\phi_{\varepsilon_1}(x)\mathrm{d}x\Bigg)^{1/2}
			\nonumber \\
		\ll & \,\, X^{66/71}(\log X)^{205}
		\cdot\big(\varepsilon_2X(\log X)^4\big)^{1/2}
		\cdot ({X^2 (\log X)^6})^{1/2}
		\nonumber \\
		& \,\, \,\, \times\Bigg(\sup_{\tau_2\leqslant|y_1|\leqslant K_2}
		\int_{\substack{\tau_2\leqslant|y_2|\leqslant K_2 \\ X^{-d}<|y_2-y_1|\leqslant2K_2}}
		|y_2-y_1|^{-1}\mathrm{d}y_2\Bigg)^{1/2}
		\nonumber \\
		\ll & \,\, X^{133/71+d/2} (\log X)^{411}.
	\end{align}
	From (\ref{eq2_20})--(\ref{eq2_23}), we derive that
	\begin{align*}
		\mathcal{J}
		\ll & \,\, X(\log X)\cdot X^{133/71} (\log X)^{411}
			+ X^{797/994}(\log X)^{-56}\cdot X^2(\log X)^6
					\nonumber \\
		& \,\, + X^{1-d/2}(\log X)\cdot X^{133/71+d/2} (\log X)^{411}
					\nonumber \\
		\ll & \,\, X^{204/71}(\log X)^{412}.
	\end{align*}
	which combined with (\ref{eq2_14})--(\ref{eq2_17}) yields that
	\begin{align}\label{D_2^{(1)}-upper-final}
		\mathscr{D}_2^{(1)}
		\ll & \,\, X(\log X) \cdot X^{95/71}(\log X)^{510} \cdot \big(X^{204/71}(\log X)^{412}\big)^{1/2}
					\nonumber\\
		& \,\, + X^{797/994}(\log X)^{-56} \cdot X^{204/71}(\log X)^{412}
					\nonumber \\
		& \,\, + X^{1-d/2}(\log X)\cdot X^{95/71+d/2}(\log X)^{511} \cdot \big(X^{204/71}(\log X)^{412}\big)^{1/2}
					\nonumber \\
		\ll & \,\, X^{268/71}(\log X)^{718}.
	\end{align}
	Also, by the symmetric property of the region in $\mathscr{D}_2^{(1)}$ and $\mathscr{D}_2^{(2)}$, one can follow
	the above process to deduce that
	\begin{align}\label{D_2^{(2)}-upper-final}
		\mathscr{D}_2^{(2)}\ll & \,\, X^{268/71}(\log X)^{718}.
	\end{align}
	Combining (\ref{eq2_8}), (\ref{D_2^{(1)}-upper-final}) and (\ref{D_2^{(2)}-upper-final}), we obtain
	\begin{equation*}
		\mathscr{D}_2\ll\mathscr{D}_2^{(1)}+\mathscr{D}_2^{(2)}\ll X^{268/71}(\log X)^{718}
		\ll\varepsilon_1\varepsilon_2X^{6-c-d}(\log X)^{-80}.
	\end{equation*}
	This completes the proof of Theorem \ref{Theorem-1}.

	\section{Preliminary Lemmas  }
	In this section, we shall demonstrate some lemmas, which are necessary for the proving process of Lemma \ref{S(x,y)-Omega_2}, as follows.
	\begin{lemma}\label{inverse-lemma}
		Suppose that $f(x)$ and $\varphi(x)$ are algebraic functions, which satisfy the following conditions constrained on the interval $[a,b]$:
		\begin{align*}
			& |f''(x)|\asymp R^{-1}, \qquad |f'''(x)|\ll(RU)^{-1},\qquad U\geqslant1,
			\nonumber \\
			& |\varphi(x)|\ll H,\qquad |\varphi'(x)|\ll HU_1^{-1},\qquad U_1\geqslant1.
		\end{align*}
		Then we have
		\begin{align*}
			\sum_{a<n\leqslant b}\varphi(x)e\big(f(n)\big)
			= & \,\, \sum_{\alpha<\nu\leqslant\beta}b_\nu\frac{\varphi(n_\nu)}{\sqrt{|f''(n_\nu)|}}
			e\big(f(n_\nu)-\nu n_\nu+1/8\big)
			\nonumber \\
			& \,\, +O\big(H\log(\beta-\alpha+2)+H(b-a+R)(U^{-1}+U_1^{-1})\big)
			\nonumber \\
			& \,\, +O\big(H\min\big(\sqrt{R},\max\big(\langle\alpha\rangle^{-1},\langle\beta\rangle^{-1}\big)\big)\big),
		\end{align*}
		where $[\alpha,\beta]$ is the image of $[a,b]$ under the mapping $y=f'(x)$; $n_\nu$ is the solution of the equation
		$f'(x)=\nu$;
		\begin{equation*}
			b_\nu=
			\begin{cases}
				1/2, & \textrm{if $\nu=\alpha\in\mathbb{Z}$ or $\nu=\beta\in\mathbb{Z}$},
				\nonumber \\
				\,\,\,1, &   \textrm{if $\alpha<\nu<\beta$};
			\end{cases}
		\end{equation*}
		and the function $\langle\cdot\rangle$ is defined by
		\begin{equation*}
			\langle t\rangle=
			\begin{cases}
				\,\,\, \|t\|, & \textrm{if $t\not\in\mathbb{Z}$}, \\
				\beta-\alpha, & \textrm{otherwise},
			\end{cases}
		\end{equation*}
		where $\|t\|=\min_{m\in\mathbb{Z}}|t-m|$.
	\end{lemma}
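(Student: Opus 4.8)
The plan is to establish Lemma \ref{inverse-lemma} as an instance of van der Corput's $B$-process: convert the sum into a sum of exponential integrals by truncated Poisson summation, evaluate each integral by the method of stationary phase, and then reassemble the error terms, taking particular care at the ends of the intervals $[a,b]$ and $[\alpha,\beta]$. Because $f$ is algebraic and $|f''(x)|\asymp R^{-1}$ on $[a,b]$, the derivative $f'$ is strictly monotone there; hence $y=f'(x)$ maps $[a,b]$ bijectively onto $[\alpha,\beta]$ with $\beta-\alpha\asymp(b-a)R^{-1}$, and for each integer $\nu$ the equation $f'(x)=\nu$ has a solution $n_\nu\in[a,b]$ exactly when $\alpha\leqslant\nu\leqslant\beta$. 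Applying Poisson summation to the (suitably regularised) function $x\mapsto\mathds{1}_{(a,b]}(x)\varphi(x)e(f(x))$ and truncating the $\nu$-sum to a bounded neighbourhood of $[\alpha,\beta]$ gives
\[
\sum_{a<n\leqslant b}\varphi(n)e\bigl(f(n)\bigr)=\sum_{\nu}\int_a^b\varphi(x)\,e\bigl(f(x)-\nu x\bigr)\,\mathrm{d}x+(\text{negligible truncation error}),
\]
the half-integer weight $b_\nu=\tfrac12$ at an endpoint frequency and the convention for $\langle\cdot\rangle$ being exactly the bookkeeping devices that handle the jumps of $\mathds{1}_{(a,b]}$ and the cases $\alpha\in\mathbb{Z}$ or $\beta\in\mathbb{Z}$.

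If $\alpha<\nu<\beta$, the phase $g_\nu(x):=f(x)-\nu x$ has the unique stationary point $n_\nu$, with $g_\nu''(n_\nu)=f''(n_\nu)\asymp R^{-1}\neq0$; the hypotheses $|f'''|\ll(RU)^{-1}$, $|\varphi|\ll H$ and $|\varphi'|\ll HU_1^{-1}$ are precisely those under which the standard saddle-point lemma applies, yielding
\[
\int_a^b\varphi(x)\,e\bigl(g_\nu(x)\bigr)\,\mathrm{d}x=\frac{\varphi(n_\nu)}{\sqrt{|f''(n_\nu)|}}\,e\!\left(f(n_\nu)-\nu n_\nu+\tfrac18\right)+(\text{boundary terms})+(\text{higher-order corrections}),
\]
the phase shift $+\tfrac18$ arising from $\operatorname{sgn}f''>0$. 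The boundary terms are of size $O\bigl(H\min(\sqrt R,|\alpha-\nu|^{-1})+H\min(\sqrt R,|\beta-\nu|^{-1})\bigr)$, while the higher-order corrections are controlled by $U^{-1}$ (from the cubic Taylor term $f'''$) and by $U_1^{-1}$ (from the variation of $\varphi$ across the stationary-phase window, of length $\asymp\sqrt R$). For the two marginal frequencies $\nu$ just outside $[\alpha,\beta]$ the phase has no interior stationary point, and the Kuzmin--Landau / first-derivative test bounds the corresponding integral by $O\bigl(H\min(\sqrt R,\max(\langle\alpha\rangle^{-1},\langle\beta\rangle^{-1}))\bigr)$, which is the last error term in the statement.

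Collecting the main terms over $\alpha\leqslant\nu\leqslant\beta$ reproduces $\sum_{\alpha<\nu\leqslant\beta}b_\nu\,\varphi(n_\nu)|f''(n_\nu)|^{-1/2}e(f(n_\nu)-\nu n_\nu+\tfrac18)$. Since the number of admissible $\nu$ is $\ll\beta-\alpha+1\ll(b-a)R^{-1}+1$, summing the higher-order corrections and estimating crudely gives $O\bigl(H(b-a+R)(U^{-1}+U_1^{-1})\bigr)$, while summing the endpoint errors $\min(\sqrt R,|\alpha-\nu|^{-1})+\min(\sqrt R,|\beta-\nu|^{-1})$ over these $\nu$ produces a harmonic-type sum of total size $\ll H\log(\beta-\alpha+2)$. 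Together with the marginal-frequency contribution from the previous step and the negligible truncation error from the first step, this is precisely the asserted identity.

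The single-integral stationary-phase estimate is classical and routine. The delicate point is the bookkeeping at the boundaries: one must show that the error incurred when the stationary point $n_\nu$ approaches an endpoint of $[a,b]$, or when $\alpha$ or $\beta$ is close to (or equal to) an integer, collapses into the \emph{sharp} terms $O\bigl(H\log(\beta-\alpha+2)\bigr)$ and $O\bigl(H\min(\sqrt R,\max(\langle\alpha\rangle^{-1},\langle\beta\rangle^{-1}))\bigr)$ rather than something larger — and one must do so with implied constants that are independent of $a$ and $b$, so that the lemma remains usable after the dyadic subdivisions in which it will be applied.
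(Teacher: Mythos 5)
The paper does not prove this lemma at all: it is quoted verbatim as Theorem 1 of Chapter III of Karatsuba--Voronin \cite{Karatsuba-Voronin-1992}, and the ``proof'' is a one-line citation. What you have written is an outline of the proof of that cited theorem, i.e.\ the classical van der Corput $B$-process: truncated Poisson summation, stationary-phase evaluation of each integral $\int_a^b\varphi(x)e(f(x)-\nu x)\,\mathrm{d}x$, first-derivative bounds for the non-stationary frequencies, and a harmonic-type summation of the endpoint errors. Your accounting of where each error term comes from --- the $H(b-a+R)(U^{-1}+U_1^{-1})$ from the cubic Taylor remainder and the variation of $\varphi$ over the $\asymp\sqrt{R}$ stationary window, the $H\log(\beta-\alpha+2)$ from summing $\min(\sqrt R,|\nu-\alpha|^{-1},|\nu-\beta|^{-1})$ over $\nu$, and the $H\min(\sqrt R,\max(\langle\alpha\rangle^{-1},\langle\beta\rangle^{-1}))$ from the marginal frequencies --- matches the structure of the Karatsuba--Voronin argument, so in substance you have reconstructed the proof the authors outsource rather than taken a different route.

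Two caveats. First, as written this is a plan, not a proof: the stationary-phase lemma with precisely these error terms, the justification of the Poisson truncation for the discontinuous weight $\mathds{1}_{(a,b]}\varphi$, and the endpoint analysis when $n_\nu$ approaches $a$ or $b$ are all asserted, and you yourself flag the boundary bookkeeping as the delicate point without carrying it out; a referee would not accept this as a self-contained proof, though it is acceptable as a reduction to a standard reference. Second, the statement only assumes $|f''(x)|\asymp R^{-1}$, so $f''$ may be negative; your remark that the $+\tfrac18$ comes from $\operatorname{sgn}f''>0$ means you are implicitly proving the case $f''>0$ and should note that the case $f''<0$ follows by conjugation (with $-\tfrac18$), or that the lemma is tacitly stated for $f''>0$ as in the source.
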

	\begin{proof}
		See Theorem 1 of Chapter III of Karatsuba and Voronin \cite{Karatsuba-Voronin-1992}.
	\end{proof}

	\begin{lemma}\label{I-K-8.17}
		Let $L>k,\,Q>0$, and $z_k$ be any complex numbers. Then we have
		\begin{equation*}
			\bigg|\sum_{K<k\leqslant L}z_k\bigg|^2\leqslant\bigg(1+\frac{L-K}{Q}\bigg)\sum_{0\leqslant|q|\leqslant Q}
			\bigg(1-\frac{|q|}{Q}\bigg)\sum_{\substack{K<k\leqslant L\\ K<k+q\leqslant L}}z_{k+q}\overline{z_k}.
		\end{equation*}
	\end{lemma}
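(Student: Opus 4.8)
The plan is to prove this by a smoothing (or "windowing") identity followed by a single application of Cauchy--Schwarz and an exact computation of a correlation integral. First I would extend the sequence by setting $z_k=0$ for $k\notin(K,L]$, and record the elementary identity obtained by averaging sharp sums over translated windows of length $Q$: since each fixed index $k$ satisfies $\theta<k\leqslant\theta+Q$ for a set of $\theta$ of Lebesgue measure exactly $Q$, one has
\begin{equation*}
\sum_{K<k\leqslant L}z_k=\frac{1}{Q}\int_{-\infty}^{+\infty}\Bigg(\sum_{\substack{K<k\leqslant L\\ \theta<k\leqslant\theta+Q}}z_k\Bigg)\mathrm{d}\theta .
\end{equation*}
The inner sum is nonzero only when $(\theta,\theta+Q]$ meets $(K,L]$, i.e. only for $\theta\in(K-Q,L]$, an interval of length $L-K+Q$.

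Next I would apply Cauchy--Schwarz in the variable $\theta$ over that interval, which yields
\begin{equation*}
\Bigg|\sum_{K<k\leqslant L}z_k\Bigg|^2\leqslant\frac{L-K+Q}{Q^2}\int_{-\infty}^{+\infty}\Bigg|\sum_{\substack{K<k\leqslant L\\ \theta<k\leqslant\theta+Q}}z_k\Bigg|^2\mathrm{d}\theta ,
\end{equation*}
and note that $(L-K+Q)/Q=1+(L-K)/Q$ is exactly the prefactor in the statement. The key step is then to expand the square as a double sum over indices written as $k_1=k+q$ and $k_2=k$ and to evaluate the $\theta$-integral of the product of the two window indicators: the admissible set of $\theta$ is $[\max(k_1,k_2)-Q,\ \min(k_1,k_2))$, which has length $Q-|q|$ when $|q|<Q$ and is empty otherwise. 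This produces precisely the Fej\'er weight $Q(1-|q|/Q)$, the range $0\leqslant|q|\leqslant Q$ (the endpoint $|q|=Q$, occurring when $Q\in\mathbb{Z}$, contributing weight $0$), and the twisted inner sum $\sum_{K<k\leqslant L,\ K<k+q\leqslant L}z_{k+q}\overline{z_k}$. Substituting back gives the claimed inequality.

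I do not expect a genuine analytic obstacle here: the argument is an exact identity decorated with one Cauchy--Schwarz step. The only point demanding care is the bookkeeping of interval endpoints --- both in the averaging identity and in the length computation $Q-|q|$ for the correlation integral --- so that the constants come out exactly as stated rather than with spurious $O(1)$ losses. (Alternatively, one could cite Lemma 8.17 of Iwaniec and Kowalski directly, but the self-contained calculation above is short enough to include.)
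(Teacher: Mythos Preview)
Your argument is correct and is, in fact, the standard proof of this Weyl--van der Corput inequality; the paper itself does not give a proof at all but simply refers the reader to Lemma~8.17 of Iwaniec and Kowalski, which your final parenthetical remark already anticipates. So there is no discrepancy: you have supplied the self-contained derivation behind the citation the paper invokes.
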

	\begin{proof}
		See Lemma 8.17 of Iwaniec and Kowalski \cite{Iwaniec-Kowalski-book}.
	\end{proof}

	\begin{lemma}\label{error-upper-lemma}
		Suppose that $f(x)\ll P$ and $f'(x)\gg\Delta$ for $x\sim N$. Then we have
		\begin{equation*}
			\sum_{n\sim N}\min\bigg(D,\frac{1}{\|f(n)\|}\bigg)\ll(P+1)(D+\Delta^{-1})\log(2+\Delta^{-1}).
		\end{equation*}
	\end{lemma}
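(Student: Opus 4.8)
The plan is to run the standard classification of $n\sim N$ by the integer nearest to $f(n)$, and to use the lower bound $f'\gg\Delta$ to show that inside each class the values $\|f(n)\|$ are widely separated. First I note that since $f'$ is continuous and $|f'(x)|\gg\Delta>0$ on $[N,2N]$, its sign is constant there, so $f$ is strictly monotonic on $[N,2N]$. For $m\in\mathbb{Z}$ set $\mathcal{N}_m=\{n\sim N:\ \|f(n)\|=|f(n)-m|\}$; every $n\sim N$ lies in $\mathcal{N}_m$ for $m$ a nearest integer to $f(n)$, and $|m|\leqslant|f(n)|+\tfrac12\ll P$, so only $\ll P+1$ of the classes are non-empty. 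Hence
\[
\sum_{n\sim N}\min\Bigl(D,\frac{1}{\|f(n)\|}\Bigr)\leqslant\sum_{m}\ \sum_{n\in\mathcal{N}_m}\min\Bigl(D,\frac{1}{|f(n)-m|}\Bigr),
\]
and it will be enough to bound the inner sum by $\ll(D+\Delta^{-1})\log(2+\Delta^{-1})$ uniformly in $m$.

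Next I would fix $m$, enumerate $\mathcal{N}_m$ as $n_1<n_2<\cdots<n_J$, and write $v_j=f(n_j)-m\in[-\tfrac12,\tfrac12]$. By the mean value theorem, for $i<j$ we have $|v_j-v_i|=|f'(\xi_{ij})|(n_j-n_i)\gg\Delta(n_j-n_i)\geqslant\Delta(j-i)$; taking $i=1$, $j=J$ and using $|v_J-v_1|\leqslant1$ shows $J\ll1+\Delta^{-1}$, and the monotonicity of $f$ makes $(v_j)_{j=1}^{J}$ a monotone, $\gg\Delta$-separated sequence in $[-\tfrac12,\tfrac12]$. Splitting according to the sign of $v_j$ (the two halves being identical), I reindex the non-negative values increasingly as $0\leqslant v_{(1)}<v_{(2)}<\cdots$; the separation gives $v_{(k)}\gg(k-1)\Delta$, so the number of $k$ with $v_{(k)}\leqslant1/D$ is $\ll1+(D\Delta)^{-1}$, contributing $\ll D\bigl(1+(D\Delta)^{-1}\bigr)\ll D+\Delta^{-1}$, while the remaining terms contribute
\[
\sum_{k:\,v_{(k)}>1/D}\frac{1}{v_{(k)}}\ll\frac{1}{\Delta}\sum_{1\leqslant k\leqslant J}\frac{1}{k}\ll\frac{\log(2+\Delta^{-1})}{\Delta}.
\]
Adding the two contributions bounds the inner sum by $\ll(D+\Delta^{-1})\log(2+\Delta^{-1})$, and summing over the $\ll P+1$ values of $m$ finishes the proof.

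Essentially everything here is routine: a geometric counting argument plus a harmonic-sum estimate. The only step that needs a moment's care is the reduction at the start --- checking that $f'$ has constant sign (so that $f$ is genuinely monotonic and the separation estimate $|v_j-v_i|\gg\Delta(n_j-n_i)$ is valid), and that the assignment of each $n$ to a nearest integer $m$ really does limit the number of non-empty classes to $\ll P+1$. I do not expect a genuine obstacle here; the only risk is sloppiness in organizing the separation argument.
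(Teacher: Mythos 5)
Your proof is correct and is essentially the standard argument behind the cited result (the paper itself gives no proof, only the reference to Kr\"atzel's Lemma 2.8): classifying $n$ by the nearest integer to $f(n)$, noting there are $\ll P+1$ non-empty classes, and using the $\Delta$-separation within a class together with a harmonic-sum estimate is exactly how that lemma is proved. The only cosmetic slip is that the separation gives $v_{(k)}\gg(k-1)\Delta$ rather than $k\Delta$, but the $k=1$ term with $v_{(1)}>1/D$ contributes at most $D$ and is absorbed into the $D+\Delta^{-1}$ term, so nothing is lost.
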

	\begin{proof}
		See Lemma 2.8 of Kr\"{a}tzel \cite{Kratzel-book}.
	\end{proof}

	\begin{lemma}\label{van-der-Corput-lemma}
		Suppose that $5<A<B\leqslant 2A$ and $f''(x)$ is continuous on $[A,B]$. If $0<c_1\lambda_1\leqslant|f'(x)|\leqslant c_2\lambda_1\leqslant1/2$, then
		\begin{equation*}
			\sum_{A<n\leqslant B}e(f(n)))\ll\lambda_1^{-1}.
		\end{equation*}
		If $0<c_3\lambda_2\leqslant|f''(x)|\leqslant c_4\lambda_2$, then
		\begin{equation*}
			\sum_{A<n\leqslant B}e(f(n)))\ll A\lambda_2^{1/2}+\lambda_{2}^{-1/2}.
		\end{equation*}
	\end{lemma}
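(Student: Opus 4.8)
The plan is to recognize Lemma~\ref{van-der-Corput-lemma} as the classical first- and second-derivative tests of van der Corput for exponential sums, so that the most efficient route is simply to quote a standard reference: Titchmarsh, \emph{The Theory of the Riemann Zeta-Function}, Lemmas~4.2 and~4.4; Ivi\'c, \emph{The Riemann Zeta-Function}, Chapter~2; Graham--Kolesnik, \emph{Van der Corput's Method of Exponential Sums}; or the monograph of Kr\"atzel already cited above for Lemma~\ref{error-upper-lemma}. For definiteness, I sketch the argument one reproduces.

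\emph{First assertion.} In the applications $f$ is, up to lower-order terms, a power of $x$, so $f'$ is monotone on $[A,B]$; granting this, the hypothesis $|f'(x)|\leqslant 1/2$ forces $\|f'(x)\|=|f'(x)|\geqslant c_1\lambda_1$ for every $x\in[A,B]$, whence the Kusmin--Landau inequality gives $\sum_{A<n\leqslant B}e(f(n))\ll\lambda_1^{-1}$ at once. One may instead argue directly, comparing the sum with $\int_A^B e(f(t))\,\mathrm{d}t$ by Euler--Maclaurin summation and integrating by parts once, the boundary and remainder terms being $O(\lambda_1^{-1})$.

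\emph{Second assertion.} I would deduce it from the first. Since $|f''|\asymp\lambda_2$ has constant sign, $f'$ is monotone and maps $[A,B]$ onto an interval of length $\asymp(B-A)\lambda_2$. Cutting $[A,B]$ at the preimages of the half-integers lying in that interval yields $O\big(1+(B-A)\lambda_2\big)$ consecutive subintervals, and on each one the function $g(x)=f(x)-\nu x$, for a suitable integer $\nu$, has $g'$ monotone with $|g'(x)|\leqslant 1/2$ and $|g''(x)|\asymp\lambda_2$. Decomposing such a subinterval dyadically according to $|g'(x)|\asymp 2^{-k}$ (down to $2^{-k}$ of size about $\lambda_2^{1/2}$) and applying the first-derivative test on each dyadic block --- together with the trivial bound on the innermost block, which has length $\ll\lambda_2^{-1/2}$ --- bounds the sum over the subinterval by $\ll\lambda_2^{-1/2}$. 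Summing over the $O\big(1+(B-A)\lambda_2\big)$ subintervals and using $B-A\leqslant A$ gives $\sum_{A<n\leqslant B}e(f(n))\ll\big(1+(B-A)\lambda_2\big)\lambda_2^{-1/2}\ll A\lambda_2^{1/2}+\lambda_2^{-1/2}$.

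The only genuinely delicate point --- and the reason this lemma is customarily quoted rather than reproved --- is obtaining the second test \emph{without} a spurious factor of $\log$: running the Weyl--van der Corput inequality (Lemma~\ref{I-K-8.17}) once and then the first-derivative test on the differenced sums costs a logarithm after one optimizes the shift length, so the dyadic decomposition of $|g'|$ near its zero above (equivalently, van der Corput's $B$-process together with a stationary-phase estimate) is what I would rely on to save it. Since the lemma enters only as a black box in the proof of Lemma~\ref{S(x,y)-Omega_2}, in practice I would just cite one of the references listed.
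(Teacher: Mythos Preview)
Your proposal is correct and matches the paper's approach: the paper's proof consists solely of the citation ``See Theorem 2.1 of Graham and Kolesnik \cite{Graham-Kolesnik-book} and Corollary 8.13 of Iwaniec and Kowalski \cite{Iwaniec-Kowalski-book}.'' Your additional sketch of the first- and second-derivative tests is accurate but more than the paper itself supplies.
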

	\begin{proof}
		See Theorem 2.1 of Graham and Kolesnik \cite{Graham-Kolesnik-book} and Corollary 8.13 of Iwaniec and Kowalski \cite{Iwaniec-Kowalski-book}.
	\end{proof}

	\begin{lemma}
		Let $M$ and $M_1$ be positive numbers such that $5\leqslant M<M_1\leqslant2M$. Suppose that $a$ and $b$ are real numbers subject to $ab\not=0$, and $\gamma_1$ and $\gamma_2$ are two distinct real numbers with $1<\gamma_1,\gamma_2<2$. Define
		\begin{equation*}
			\mathcal{S}:=\mathcal{S}(M,a,b,\gamma_1,\gamma_2)=\sum_{M<m\leqslant M_1}e(am^{\gamma_1}+bm^{\gamma_2}),
		\end{equation*}
		and
		\begin{equation*}
			\mathcal{R}=|a|M^{\gamma_1}+|b|M^{\gamma_2}.
		\end{equation*}
		If $\mathcal{R}M^{-1}\leqslant1/8$, then one has
		\begin{equation}\label{S-upper-1}
			\mathcal{S}\ll M\mathcal{R}^{-1/2}.
		\end{equation}
		If $M\ll\mathcal{R}\ll M^2$, then one has
		\begin{equation}\label{S-upper-2}
			\mathcal{S}\ll \mathcal{R}^{1/2}+M\mathcal{R}^{-1/3}.
		\end{equation}
	\end{lemma}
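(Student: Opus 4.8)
The plan is to treat this as a van der Corput estimate for $\mathcal S=\sum_{M<m\le M_1}e(f(m))$ with $f(x)=ax^{\gamma_1}+bx^{\gamma_2}$, applying the first- and second-derivative tests of Lemma \ref{van-der-Corput-lemma} on suitably chosen subintervals of $[M,M_1]$; the implied constants may depend on $\gamma_1,\gamma_2$. I may assume $M$ is large (else $\mathcal S=O(1)$ and both bounds are trivial), and when $\mathcal R M^{-1}\le 1/8$ I may assume $\mathcal R\ge 1$ (otherwise $\mathcal S\ll M\ll M\mathcal R^{-1/2}$). Relabelling the two exponents I may assume $\gamma_1>\gamma_2$, and replacing $f$ by $-f$ (which conjugates $\mathcal S$) I may assume $a>0$. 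Note that any subinterval $(A,B]$ of $[M,M_1]$ automatically satisfies $B\le 2A$, so Lemma \ref{van-der-Corput-lemma} applies on it. The structural input is that, writing $f'(x)=x^{\gamma_2-1}\big(a\gamma_1 x^{\gamma_1-\gamma_2}+b\gamma_2\big)$ and $f''(x)=x^{\gamma_2-2}\big(a\gamma_1(\gamma_1-1)x^{\gamma_1-\gamma_2}+b\gamma_2(\gamma_2-1)\big)$, the bracketed factors are strictly monotone, so each of $f',f''$ has at most one zero on $(0,\infty)$ (call them $x_0,x_1$); these occur exactly when $a,b$ have opposite signs, and then $x_0^{\gamma_1-\gamma_2}=-b\gamma_2/(a\gamma_1)$ and $x_1^{\gamma_1-\gamma_2}=-b\gamma_2(\gamma_2-1)/(a\gamma_1(\gamma_1-1))$ differ by the fixed factor $(\gamma_1-1)/(\gamma_2-1)\ne 1$, so $x_0\ne x_1$.

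I first dispose of the non-resonant case: if $a,b$ have the same sign, or if $|a|M^{\gamma_1}$ and $|b|M^{\gamma_2}$ differ by more than a large enough factor $C(\gamma_1,\gamma_2)$, then on $[M,M_1]\subseteq[M,2M]$ the larger monomial dominates $f''$ term by term, whence $|f''|\asymp\mathcal R M^{-2}$ with constant sign, and the second-derivative test gives $\mathcal S\ll M(\mathcal R M^{-2})^{1/2}+(\mathcal R M^{-2})^{-1/2}=\mathcal R^{1/2}+M\mathcal R^{-1/2}$; this is $\ll M\mathcal R^{-1/2}$ when $\mathcal R\le M/8$, and $\ll\mathcal R^{1/2}+M\mathcal R^{-1/3}$ when $\mathcal R\gg 1$. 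It remains to treat the resonant case $a>0>b$ with $|a|M^{\gamma_1}\asymp|b|M^{\gamma_2}$; then $\mathcal R\asymp aM^{\gamma_1}\asymp|b|M^{\gamma_2}$, the zeros satisfy $x_0,x_1\asymp M$ (constants depending on $\gamma_1,\gamma_2$), and applying the mean value theorem to the monotone bracketed factors yields the pointwise comparisons $|f'(x)|\asymp\mathcal R M^{-2}|x-x_0|$ and $|f''(x)|\asymp\mathcal R M^{-3}|x-x_1|$ for $x\in[M,M_1]$ (if $x_1$, resp.\ $x_0$, happens to be bounded away from $[M,M_1]$ these just become $|f''|\asymp\mathcal R/M^2$, resp.\ $|f'|\asymp\mathcal R/M$, and one is again in the previous situation).

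When $\mathcal R M^{-1}\le 1/8$ one has $|f'(x)|\le\max\big(a\gamma_1 x^{\gamma_1-1},|b|\gamma_2 x^{\gamma_2-1}\big)\le\tfrac12$ on $[M,M_1]$. On the short block $I_0=[M,M_1]\cap[x_0-\delta M,x_0+\delta M]$ with $\delta=\delta(\gamma_1,\gamma_2)$ small (using $|x_0-x_1|\asymp M$) one has $|f''|\asymp\mathcal R/M^2$, so the second-derivative test gives a contribution $\ll\mathcal R^{1/2}+M\mathcal R^{-1/2}\ll M\mathcal R^{-1/2}$; on the $O(1)$ remaining dyadic blocks $|x-x_0|\asymp 2^k\delta M$ one has $|f'|\asymp\mathcal R/M$ up to bounded factors, and the first-derivative test gives $\ll M/\mathcal R\ll M\mathcal R^{-1/2}$ per block. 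Thus $\mathcal S\ll M\mathcal R^{-1/2}$. When $M\ll\mathcal R\ll M^2$ I work with $f''$: split $[M,M_1]$ at $x_1$ into at most two intervals of constant sign for $f''$, and on each decompose dyadically into blocks $2^k t_0\le|x-x_1|<2^{k+1}t_0$ ($k\ge 0$) together with a central block $|x-x_1|<t_0$, where $t_0:=M\mathcal R^{-1/3}$ (so $1\le t_0\le M$). On the $k$-th block $|f''|\asymp\mathcal R M^{-3}2^k t_0$, and the second-derivative test gives $\ll 2^k t_0(\mathcal R M^{-3}2^k t_0)^{1/2}+(\mathcal R M^{-3}2^k t_0)^{-1/2}$; summing the two geometric series produces $\ll\mathcal R^{1/2}$ (from the top scale $2^k t_0\asymp M$) and $\ll\mathcal R^{-1/2}M^{3/2}t_0^{-1/2}=M\mathcal R^{-1/3}$ (from the bottom scale), while the central block contributes trivially $\ll t_0=M\mathcal R^{-1/3}$. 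Hence $\mathcal S\ll\mathcal R^{1/2}+M\mathcal R^{-1/3}$, which finishes the proof.

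The hard part is precisely this resonant configuration, where $a$ and $b$ have opposite signs and the two power terms are of comparable size: then $f''$ has a genuine simple zero $x_1\asymp M$, and near $x_1$ the second derivative is too small for the second-derivative test while the first derivative there has size $\asymp\mathcal R/M$, which for $\mathcal R\gg M$ is too large for the first-derivative test. What rescues the argument is the dyadic decomposition around $x_1$ with the balanced choice $t_0=M\mathcal R^{-1/3}$, together with the sharp pointwise bound $|f''(x)|\asymp\mathcal R M^{-3}|x-x_1|$ (a crude global lower bound for $|f''|$ would not suffice); everything else is routine once the sign-change structure of $f'$ and $f''$ has been laid out.
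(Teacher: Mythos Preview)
Your argument is correct. The paper itself does not prove this lemma: it simply cites Lemma~5 and Lemma~6 of Zhai \cite{Zhai-2000}. What you have supplied is a self-contained proof using exactly the tools available in the paper (Lemma~\ref{van-der-Corput-lemma}), namely the Kusmin--Landau and second-derivative tests applied on dyadic blocks around the (at most one) zero of $f'$ or $f''$. This is the standard route and almost certainly the argument in Zhai's paper as well; the key observation you isolate---that the unique zeros $x_0,x_1$ of $f',f''$ satisfy $(x_0/x_1)^{\gamma_1-\gamma_2}=(\gamma_1-1)/(\gamma_2-1)\neq 1$, so they are separated by $\asymp M$ in the resonant case---is precisely what makes the two derivative tests mesh. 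Your balanced choice $t_0=M\mathcal R^{-1/3}$ for the central block in the range $M\ll\mathcal R\ll M^2$ is also the natural one. Two small remarks: when invoking the first-derivative test away from $x_0$ you should in principle split each annulus at $x_1$ so that $f'$ is monotone, but this costs only a factor of $2$; and your inequality $|f'(x)|\le\max(\,\cdot\,,\,\cdot\,)$ uses the opposite signs of $a,b$, which is indeed the case you are in.
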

	\begin{proof}
		See Lemma 5 and Lemma 6 of Zhai \cite{Zhai-2000}.
	\end{proof}

	Now, we focus on the upper bound estimate of $S(x,y)$ for $(x,y)\in\Omega_2$. Suppose that $1<d<c<79/71$ and fix
	$(x,y)\in\Omega_2$. Write $R=|x|X^c+|y|X^d$. Trivially, there holds
	$X^{3/4-\eta}\ll R\ll X^{79/71}(\log X)^{-400}$.
	
	\begin{lemma}\label{Type-I-upper}
		Suppose that $a(m)$ are sequences supported on the intervals $(M,2M]$. Suppose further that
		\begin{equation*}
			\sum_{m\sim M}\big|a(m)\big|^2\ll M(\log M)^{2A},\qquad A>0.
		\end{equation*}
		Then, for $M\ll\min\big(X^{51/71},X^{117/71}R^{-1}\big),\,MN\asymp X$, we have
		\begin{equation*}
			S_I=\sum_{m\sim M}a(m)\sum_{n\sim N}e\big(x(mn)^c+y(mn)^d\big)\ll X^{66/71}(\log X)^{A+1}.
		\end{equation*}
	\end{lemma}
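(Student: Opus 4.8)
The plan is to handle the inner sum $\sum_{n\sim N}e\big(x(mn)^c+y(mn)^d\big)$ for each fixed $m\sim M$ by van der Corput's method, and then sum trivially over $m$ using the hypothesis on $\sum|a(m)|^2$ via Cauchy--Schwarz. First I would fix $m\sim M$ and set $f(n)=x(mn)^c+y(mn)^d=x m^c n^c + y m^d n^d$, so that $n$ ranges over an interval of length $\asymp N\asymp X/M$. One computes $f'(n)\asymp (|x|X^c+|y|X^d)/N = R/N$ and, more importantly, $f''(n)\asymp R/N^2 = RM^2/X^2$. The shape of the estimate to aim for is: apply the second derivative test (Lemma \ref{van-der-Corput-lemma}, or directly Lemma \ref{van-der-Corput-lemma}'s second assertion with $\lambda_2\asymp RM^2/X^2$) to get $\sum_{n\sim N}e(f(n))\ll N\lambda_2^{1/2}+\lambda_2^{-1/2}\ll R^{1/2}+X R^{-1/2}M^{-1}$. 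Alternatively, when $R$ is very small one uses the first-derivative test to get $\ll N/\,(R/N)=N^2/R = X^2 M^{-2}R^{-1}$; but in the relevant range $R\gg X^{3/4-\eta}$ is not small, so the second-derivative bound is the operative one.

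Next I would sum over $m$. By Cauchy--Schwarz and the hypothesis $\sum_{m\sim M}|a(m)|^2\ll M(\log M)^{2A}$,
\[
|S_I|\leqslant \Big(\sum_{m\sim M}|a(m)|^2\Big)^{1/2}\Big(\sum_{m\sim M}\big|\textstyle\sum_{n\sim N}e(f(n))\big|^2\Big)^{1/2}\ll M^{1/2}(\log M)^{A}\Big(\sum_{m\sim M}\big|\textstyle\sum_{n\sim N}e(f(n))\big|^2\Big)^{1/2}.
\]
For the second factor I would insert the pointwise bound $\sum_{n\sim N}e(f(n))\ll R^{1/2}+XR^{-1/2}M^{-1}$, getting $\sum_{m\sim M}(\cdots)\ll M\big(R+X^2R^{-1}M^{-2}\big)$, hence
\[
|S_I|\ll M(\log M)^{A}\big(R^{1/2}+XR^{-1}M^{-1}\big)\cdot M^{1/2}\cdot M^{-1/2}\ \ =\ M(\log M)^A\big(R^{1/2}+X R^{-1}M^{-1}\big).
\]
So it remains to check that both $M R^{1/2}$ and $X R^{-1}$ are $\ll X^{34/37}$ under the hypotheses $M\ll\min(X^{25/37},X^{59/37}R^{-1})$ and $X^{3/4-\eta}\ll R\ll X^{39/37}(\log X)^{-200}$. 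For the first term: $MR^{1/2}\ll X^{59/37}R^{-1}\cdot R^{1/2}=X^{59/37}R^{-1/2}\ll X^{59/37}\cdot X^{-(3/4-\eta)/2}=X^{59/37-3/8+\eta/2}$, and $59/37-3/8 = (472-111)/296=361/296<34/37=272/296$? No — so this crude bound is not enough, and one instead balances: use $MR^{1/2}\ll X^{25/37}R^{1/2}\ll X^{25/37}\cdot X^{39/74}(\log X)^{-100}$, where $25/37+39/74 = 50/74+39/74 = 89/74 > 34/37 = 68/74$, still too big; the correct move is to split at the crossover $R\asymp X^{34/37}M^{-2/3}$-type threshold and use whichever of the two inputs ($MR^{1/2}$ from small $R$, $XR^{-1}$ from large $R$) dominates, using both constraints on $M$ simultaneously. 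I would carry out this bookkeeping carefully, noting that the constraint $M\ll X^{25/37}$ controls the range $R\gg X^{\star}$ while $M\ll X^{59/37}R^{-1}$ controls $R\ll X^{\star}$, and the exponent $34/37=39/37 - 5/37$ together with $25/37$ and $59/37$ is exactly calibrated so the two regimes meet below $X^{34/37}$.

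The main obstacle is precisely this last optimization: verifying that the two competing terms $M R^{1/2}$ and $X R^{-1}$, each governed by one of the two upper bounds on $M$, never exceed $X^{34/37}$ across the full admissible range of $R$, i.e.\ that the numbers $25/37$, $59/37$, $34/37$ and the lower bound $3/4-\eta$ on $\log_X R$ fit together correctly; the factor $(\log X)^{-200}$ in the upper bound for $R$ absorbs the $(\log M)$ losses from the dyadic/Cauchy--Schwarz steps, giving the clean final exponent $(\log X)^{A+1}$. A secondary technical point is justifying that $f''$ genuinely has constant order $\asymp R/N^2$ on the whole interval (one needs $x m^c n^c$ and $y m^d n^d$ not to cancel in the second derivative); since $c\neq d$ and $1<d<c$, the two second derivatives have exponents $c-2$ and $d-2$ in $n$ and cannot cancel identically, and on a dyadic block $n\sim N$ the larger of $|x|X^c, |y|X^d$ dominates up to constants — this is routine but should be stated. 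Everything else is the Cauchy--Schwarz-plus-van-der-Corput template already used by Tolev and Zhai.
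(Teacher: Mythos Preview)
Your approach has a genuine gap: the second-derivative bound on the inner sum is simply too weak to reach $X^{34/37}$. After Cauchy--Schwarz you correctly arrive at
\[
S_I \ll M(\log M)^A\big(R^{1/2}+XR^{-1/2}M^{-1}\big),
\]
so you need $MR^{1/2}\ll X^{34/37}$. But under the hypotheses $M\ll\min(X^{25/37},X^{59/37}R^{-1})$ one gets at best
\[
MR^{1/2}\ll\min\big(X^{25/37}R^{1/2},\,X^{59/37}R^{-1/2}\big),
\]
and these two cross at $R=X^{34/37}$, where both equal $X^{42/37}$. Thus $MR^{1/2}$ can be as large as $X^{42/37}$, exceeding the target by a factor $X^{8/37}$; no amount of ``balancing'' between the two constraints on $M$ closes this. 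Your own arithmetic checks (``$361/296$'', ``$89/74$'') already detect this, and the final paragraph's hope that the exponents are ``exactly calibrated'' is unfounded.

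There is also a second problem you brush aside: the claim $|f''(n)|\asymp R/N^2$ is false in general. When $xy<0$ the two terms $c(c-1)xm^cn^{c-2}$ and $d(d-1)ym^dn^{d-2}$ can nearly cancel on the dyadic range, so the second-derivative test cannot be applied uniformly.

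The paper's proof is substantially more elaborate. It first disposes of small $M$ (namely $M\ll X^{34/37}R^{-1/2}$) using the bound $\sum_n e\ll R^{1/2}+NR^{-1/3}$, which already handles the $MR^{1/2}$ term in that range. For larger $M$ it applies a Weyl shift in $n$ with step $Q=[X^{6/37}]$ (Lemma~\ref{I-K-8.17}), reducing to sums $E_q$ with differenced phase. These are then treated by a four-case analysis according to the sizes of the partial derivatives; the possible cancellation when $xy<0$ is isolated and controlled in Cases~2--3, and in the main Case~4 the $B$-process (Lemma~\ref{inverse-lemma}) is applied to the $n$-sum, followed by a second-derivative estimate in the dual variable $m$. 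It is precisely this extra Weyl+$B$-process step that recovers the missing $X^{8/37}$ and explains the appearance of the two separate constraints $M\ll X^{25/37}$ and $M\ll X^{59/37}R^{-1}$.
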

	\begin{proof}
		If $M\ll X^{66/71}R^{-1/2}$, then it follows from Cauchy's inequality and (\ref{S-upper-2}) that
		\begin{align*}
			S_I \ll & \,\, \Bigg(\sum_{m\sim M}\big|a(m)\big|^2\Bigg)^{1/2}
			\Bigg(\sum_{m\sim M}\Bigg|\sum_{n\sim N}e\big(x(mn)^c+y(mn)^d\big)\Bigg|^2\Bigg)^{1/2}
			\nonumber \\
			\ll & \,\, \big(M(\log M)^{2A}\big)^{1/2}\big(M(R^{1/2}+NR^{-1/3})^2\big)^{1/2}
			\nonumber \\
			\ll & \,\, M(R^{1/2}+NR^{-1/3})(\log M)^A\ll X^{66/71}(\log X)^A.
		\end{align*}
		From now on we always postulate that $M\gg X^{66/71}R^{-1/2}$. Set $Q=[X^{10/71}]$. It follows from Cauchy's inequality and Lemma \ref{I-K-8.17} that
		\begin{equation*}
			\big|S_I\big|^2\ll X^2Q^{-1}(\log X)^{2A}+XQ^{-1}(\log X)^{2A}\sum_{1\leqslant q\leqslant Q}\big|E_q\big|,
		\end{equation*}
		where
		\begin{equation*}
			E_q=\sum_{m\sim M}\sum_{N<n\leqslant2N-q}e\big(xm^c\Delta(n,q;c)+ym^d\Delta(n,q;d)\big),
		\end{equation*}
		and
		\begin{equation*}
			\Delta(n,q;t)=(n+q)^t-n^t.
		\end{equation*}
		Therefore, it suffices to show that
		\begin{equation*}
			\sum_{1\leqslant q\leqslant Q}\big|E_q\big|\ll X(\log X)^2.
		\end{equation*}
		For each fixed $q$ subject to $1\leqslant q\leqslant Q$, define
		\begin{equation*}
			f(m,n)=xm^c\Delta(n,q;c)+ym^d\Delta(n,q;d).
		\end{equation*}
		Now, we consider the following four cases.
		
		\noindent
		\textbf{Case 1.} If $|f_m|\leqslant10^{-3}$, then one can see easily that
		\begin{align*}
			& \big|xm^c\Delta(n,q;c)\big|\asymp q|x|m^cn^{c-1} \asymp q|x|X^cN^{-1},
			\nonumber \\
			& \big|ym^d\Delta(n,q;d)\big|\asymp q|y|m^dn^{d-1} \asymp q|y|X^dN^{-1},
		\end{align*}
		and thus
		\begin{equation*}
			\big|xm^c\Delta(n,q;c)\big|+\big|ym^d\Delta(n,q;d)\big|\asymp qRN^{-1}.
		\end{equation*}
		We apply (\ref{S-upper-1}) to the sum over $m$ to derive that
		\begin{equation*}
			E_q\ll NM(qRN^{-1})^{-1/2}\ll MN^{3/2}q^{-1/2}R^{-1/2},
		\end{equation*}
		which combined with the two conditions $M\gg X^{66/71}R^{-1/2}$ and $R\gg X^{3/4-\eta}$ yields that
		\begin{align*}
			\sum_{1\leqslant q\leqslant Q}\big|E_q\big|
			\ll & \,\, MN^{3/2}Q^{1/2}R^{-1/2}\ll X^{223/142}M^{-1/2}R^{-1/2}
			\nonumber \\
			\ll & \,\, X^{157/142}R^{-1/4}\ll X^{1043/1136+\eta}=o(X).
		\end{align*}
		
		\noindent
		\textbf{Case 2.} If $|f_n|\leqslant10^{-3}$, then for fixed $m$, there holds
		\begin{align*}
			&  f_n=cxm^c\Delta(n,q;c-1)+dym^d\Delta(n,q;d-1),
			\nonumber \\
			& \Delta(n,q;c-1)=(c-1)qn^{c-2}+O(q^2N^{c-3}),
			\nonumber \\
			& \Delta(n,q;d-1)=(d-1)qn^{d-2}+O(q^2N^{d-3}),
		\end{align*}
		and hence
		\begin{equation*}
			f_n=c(c-1)xqm^cn^{c-2}+d(d-1)yqm^dn^{d-2}+O(q^2RN^{-3}).
		\end{equation*}
		If $xy>0$, then $f_n\asymp qRN^{-2}=o(1)$, which combined with Lemma \ref{van-der-Corput-lemma} yields that
		\begin{equation*}
			E_q\ll MN^2q^{-1}R^{-1},
		\end{equation*}
		and thus
		\begin{align*}
			\sum_{1\leqslant q\leqslant Q}\big|E_q\big|
			\ll & \,\, MN^2R^{-1}\log X \ll X^2M^{-1}R^{-1}\log X
			\nonumber \\
			\ll & \,\, X^{76/71}R^{-1/2}\log X\ll X^{395/568+\eta}=o(X).
		\end{align*}
		If $xy<0$, define
		\begin{align*}
			\mathcal{J}_1=\big\{n\in(N,2N-q]:\,|f_n|\leqslant q^{1/2}R^{1/2}N^{-3/2}\big\},
			\nonumber \\
			\mathcal{J}_2=\big\{n\in(N,2N-q]:\,|f_n|> q^{1/2}R^{1/2}N^{-3/2}\big\}.
		\end{align*}
		For $n\in\mathcal{J}_1$, we get
		\begin{align*}
			c(c-1)xqm^cn^{c-2}
			= & \,\, -d(d-1)yqm^dn^{d-2}+O\big(q^{1/2}R^{1/2}N^{-3/2}+q^2RN^{-3}\big)
			\nonumber \\
			= & \,\, -d(d-1)yqm^dn^{d-2}\big(1+O\big(q^{-1/2}R^{-1/2}N^{1/2}+qN^{-1}\big)\big),
		\end{align*}
		which implies
		\begin{align*}
			n = & \,\, \bigg(-\frac{d(d-1)ym^d}{c(c-1)xm^c}\bigg)^{1/(c-d)}
			\big(1+O\big(q^{-1/2}R^{-1/2}N^{1/2}+qN^{-1}\big)\big)^{1/(c-d)}
			\nonumber \\
			= & \,\, \bigg(-\frac{d(d-1)ym^d}{c(c-1)xm^c}\bigg)^{1/(c-d)}
			\big(1+O\big(q^{-1/2}R^{-1/2}N^{1/2}+qN^{-1}\big)\big)
			\nonumber \\
			= & \,\, \bigg(-\frac{d(d-1)ym^d}{c(c-1)xm^c}\bigg)^{1/(c-d)}+O\big(q+q^{-1/2}R^{-1/2}N^{3/2}\big).
		\end{align*}
		Consequently, we obtain
		\begin{equation*}
			|\mathcal{J}_1|\ll q+q^{-1/2}R^{-1/2}N^{3/2}.
		\end{equation*}
		For $n\in\mathcal{J}_2$, it follows from Lemma \ref{van-der-Corput-lemma} that
		\begin{equation*}
			\sum_{\substack{N<n\leqslant 2N-q\\ q^{1/2}R^{1/2}N^{-3/2}<|f_n|\leqslant10^{-3}}}e(f(m,n))
			\ll q^{-1/2}R^{-1/2}N^{3/2}.
		\end{equation*}
		Therefore, for $xy<0$, one has
		\begin{equation*}
			\sum_{\substack{N<n\leqslant2N-q\\ |f_n|\leqslant10^{-3}}}e(f(m,n))\ll q+q^{-1/2}R^{-1/2}N^{3/2}.
		\end{equation*}
		Combining the above two cases (i.e., $xy>0$ and $xy<0$), we derive that
		\begin{equation*}
			\sum_{m\sim M}\sum_{\substack{N<n\leqslant 2N-q\\ |f_n|\leqslant10^{-3}}}e(f(m,n))
			\ll Mq+q^{-1/2}R^{-1/2}MN^{3/2}+MN^2q^{-1}R^{-1},
		\end{equation*}
		and thus
		\begin{align*}
			\sum_{1\leqslant q\leqslant Q}\sum_{m\sim M}\sum_{\substack{N<n\leqslant2N-q\\ |f_n|\leqslant10^{-3}}}e(f(m,n))
			\ll  & \,\, MQ^2+Q^{1/2}R^{-1/2}MN^{3/2}+MN^2R^{-1}\log Q
			\nonumber \\
			\ll & \,\, X\log X,
		\end{align*}
		provided that $X^{66/71}R^{-1/2}\ll M\ll X^{51/71}$.
		
		\noindent
		\textbf{Case 3.} Suppose that there exist two non--negative integers $i$ and $j$ subject to $2\leqslant i+j\leqslant3$ such that
		\begin{equation}\label{case-3-condition}
			\bigg|\frac{\partial^{i+j}f}{\partial m^i\partial n^j}\bigg|\leqslant\frac{qR\log X}{QM^iN^{j+1}}.
		\end{equation}
		Set
		\begin{equation*}
			\mathfrak{c}(\gamma,i)=
			\begin{cases}
				\gamma(\gamma-1)\cdots(\gamma-i+1), & \textrm{if $i\geqslant1$},  \\
				\qquad\qquad  1, & \textrm{if $i=0$}.
			\end{cases}
		\end{equation*}
		Then one has
		\begin{equation*}
			\frac{\partial^{i+j}f}{\partial m^i\partial n^j}=\mathfrak{c}(c,i)\mathfrak{c}(c,j)xm^{c-i}\Delta(n,q;c-j)
			+\mathfrak{c}(d,i)\mathfrak{c}(d,j)ym^{d-i}\Delta(n,q;d-j).
		\end{equation*}
		By noting the fact that $\mathfrak{c}(c,i)\mathfrak{c}(c,j)$ and $\mathfrak{c}(d,i)\mathfrak{c}(d,j)$ are the same sign, without loss of generality, we can postulate $xy<0$. Otherwise, there exists no pair $(m,n)$ which satisfies
		(\ref{case-3-condition}). For $(m,n)$ satisfying (\ref{case-3-condition}), we deduce that
		\begin{align*}
			\mathfrak{c}(c,i)\mathfrak{c}(c,j)xm^{c-i}\Delta(n,q;c-j)
			= & \,\, -\mathfrak{c}(d,i)\mathfrak{c}(d,j)ym^{d-i}\Delta(n,q;d-j)
			+O\bigg(\frac{qR\log X}{QM^iN^{j+1}}\bigg)
			\nonumber \\
			= & \,\, -\mathfrak{c}(d,i)\mathfrak{c}(d,j)ym^{d-i}\Delta(n,q;d-j)
			\big(1+O(Q^{-1}\log X)\big),
		\end{align*}
		which implies that
		\begin{align*}
			m = & \,\, \bigg(-\frac{\mathfrak{c}(d,i)\mathfrak{c}(d,j)y\Delta(n,q;d-j)}
			{\mathfrak{c}(c,i)\mathfrak{c}(c,j)x\Delta(n,q;c-j)}\bigg)^{1/(c-d)}
			\big(1+O(Q^{-1}\log X)\big)^{1/(c-d)}
			\nonumber \\
			= & \,\, \bigg(-\frac{\mathfrak{c}(d,i)\mathfrak{c}(d,j)y\Delta(n,q;d-j)}
			{\mathfrak{c}(c,i)\mathfrak{c}(c,j)x\Delta(n,q;c-j)}\bigg)^{1/(c-d)}
			\big(1+O(Q^{-1}\log X)\big)
			\nonumber \\
			= & \,\, \bigg(-\frac{\mathfrak{c}(d,i)\mathfrak{c}(d,j)y\Delta(n,q;d-j)}
			{\mathfrak{c}(c,i)\mathfrak{c}(c,j)x\Delta(n,q;c-j)}\bigg)^{1/(c-d)}
			+O\big(MQ^{-1}\log X\big).
		\end{align*}
		Therefore, we get
		\begin{equation*}
			\mathop{\sum_{m\sim M}\sum_{N<n\leqslant 2N-q}}_{\substack{\big|\frac{\partial^{i+j}f}{\partial m^i\partial n^j}\big|
					\leqslant\frac{qR\log X}{QM^iN^{j+1}}\\ 2\leqslant i+j\leqslant3   } }e(f(m,n))\ll\frac{MN\log X}{Q}
			\ll\frac{X\log X}{Q},
		\end{equation*}
		which implies that
		\begin{equation*}
			\sum_{1\leqslant q\leqslant Q}\mathop{\sum_{m\sim M}\sum_{N<n\leqslant 2N-q}}_{\substack{\big|\frac{\partial^{i+j}f}{\partial m^i\partial n^j}\big|
					\leqslant\frac{qR\log X}{QM^iN^{j+1}}\\ 2\leqslant i+j\leqslant3   } }e(f(m,n))\ll X\log X.
		\end{equation*}
		
		\noindent
		\textbf{Case 4.} In this case, we postulate that all the conditions which are in the Cases 1--3 do not hold. Then, $|f_n|>10^{-3}>0$, which means $f_n$ keep the same sign. Without loss of generality, we suppose that $f_n>0$.
		For any fixed $j$ subject to $0\leqslant j\leqslant(\log(10Q))/\log2$, denote by $I_j=[A_j,B_j]$ the subinterval of
		$[N,2N-q]$ in which
		\begin{equation*}
			\frac{2^jqR}{QN^3}<\bigg|\frac{\partial^2f}{\partial n^2}\bigg|\leqslant\frac{2^{j+1}qR}{QN^3},
		\end{equation*}
		where $A_j$ and $B_j$ may depend on $m$. By Lemma \ref{inverse-lemma}, we obtain
		\begin{equation*}
			\sum_{n\in I_j}e(f(m,n))=e(1/8)\sum_{\nu_1(m)<\nu\leqslant\nu_2(m)}b_\nu
			\frac{e(\mathfrak{s}(m,\nu))}{\sqrt{|\mathcal{G}(m,\nu)|}}+O\big(\mathcal{R}(m,q,j)\big),
		\end{equation*}
		where
		\begin{equation*}
			f_n(m,g(m,\nu))=\nu,\qquad\mathfrak{s}(m,\nu)=f(m,g(m,\nu))-\nu\cdot g(m,\nu),
		\end{equation*}
		\begin{equation*}
			\mathcal{G}(m,\nu)=f_{nn}(m,g(m,\nu)),\qquad \frac{qR}{QN^2}\ll\nu_1(m),\nu_2(m)\ll\frac{qR}{N^2},
		\end{equation*}
		\begin{equation*}
			\mathcal{R}(m,q,j)=\log X+\frac{QN^2}{2^jqR}+\min\bigg(\frac{Q^{1/2}N^{3/2}}{2^{j/2}q^{1/2}R^{1/2}},
			\frac{1}{\|\nu_1(m)\|},\frac{1}{\|\nu_2(m)\|}\bigg).
		\end{equation*}
		Since the condition in Case 2 does not hold, one has $|f_n|\asymp qRN^{-2}\gg1$. Moreover, by noting that
		\begin{equation*}
			\nu_1'(m)\gg\frac{qR}{QMN^2} \qquad \textrm{and}  \qquad\nu_2'(m)\gg\frac{qR}{QMN^2},
		\end{equation*}
		it follows from Lemma \ref{error-upper-lemma} that
		\begin{align*}
			& \,\, \sum_{1\leqslant q\leqslant Q}\sum_{0\leqslant j\ll\log Q}\sum_{m\sim M}\mathcal{R}(m,q,j)
			\nonumber \\
			\ll & \,\, \sum_{1\leqslant q\leqslant Q}\sum_{0\leqslant j\ll\log Q}\Bigg(M\log X+\frac{QMN^2}{2^jqR}
			+\frac{qR}{N^2}\cdot\frac{Q^{1/2}N^{3/2}}{2^{j/2}q^{1/2}R^{1/2}}+\frac{qR}{N^2}
			\cdot\frac{QMN^2}{qR}\Bigg)
			\nonumber \\
			\ll & \,\, MQ^2(\log X)^2+QMN^2R^{-1}\log X+Q^2R^{1/2}N^{-1/2}
			\nonumber \\
			\ll & \,\, X(\log X)^2.
		\end{align*}
		Set $\nu_1=\min\limits_{m\sim M}\nu_1(m),\nu_2=\max\limits_{m\sim M}\nu_2(m)$. Then we derive that
		\begin{equation*}
			\sum_{m\sim M}\sum_{\nu_1(m)<\nu\leqslant\nu_2(m)}b_\nu\frac{e(\mathfrak{s}(m,\nu))}{\sqrt{|\mathcal{G}(m,\nu)|}}
			\ll\sum_{\nu_1\leqslant\nu\leqslant\nu_2}\Bigg|\sum_{m\in\mathcal{I}_\nu}
			\frac{e(\mathfrak{s}(m,\nu))}{\sqrt{|\mathcal{G}(m,\nu)|}}\Bigg|,
		\end{equation*}
		where $\mathcal{I}_\nu$ is a subinterval of $[M,2M]$. Thus, it suffices to estimate the inner sum over $m$. First,
		we shall show that $|\mathcal{G}(m,\nu)|^{-1/2}$ is monotonic in $m$. Write $\mathfrak{g}=g(m,\nu)$ for abbreviation. Differentiating the equation $f_n(m,g(m,\nu))=\nu$ over $m$, we obtain
		\begin{equation*}
			g_m(m,\nu)=-\frac{f_{nm}(m,\mathfrak{g})}{f_{nn}(m,\mathfrak{g})},
		\end{equation*}
		and thus
		\begin{equation*}
			\mathcal{G}_m(m,\nu)=f_{mnn}+f_{nnn}g_m=\frac{f_{nnm}f_{nn}-f_{nnn}f_{nm}}{f_{nn}}.
		\end{equation*}
		By the assumption, we know that $f_{nn}$ always keeps the definite sign. Hence, it suffices to show that $f_{nnm}f_{nn}-f_{nnn}f_{nm}$ has the same sign on some subinterval of $[M,2M]$. Now, by simple calculations,
		we get
		\begin{align*}
			f_{nm} = & \,\, c^2xm^{c-1}\Delta(\mathfrak{g},q;c-1)+d^2ym^{d-1}\Delta(\mathfrak{g},q;d-1)
			\nonumber \\
			= & \,\, c^2(c-1)xqm^{c-1}\mathfrak{g}^{c-2}+d^2(d-1)yqm^{d-1}\mathfrak{g}^{d-2}
			+O\big(q^2RM^{-1}N^{-3}\big).
		\end{align*}
		Since the condition of Case 3 does not hold, we have
		\begin{equation*}
			|f_{nm}|>\frac{qR\log X}{QMN^2},
		\end{equation*}
		and thus
		\begin{equation*}
			f_{nm}=\big(c^2(c-1)xqm^{c-1}\mathfrak{g}^{c-2}+d^2(d-1)yqm^{d-1}\mathfrak{g}^{d-2}\big)
			\big(1+O\big(Q^2N^{-1}(\log X)^{-1}\big)\big).
		\end{equation*}
		For $f_{nn}$, we have
		\begin{align*}
			f_{nn} = & \,\, c(c-1)xm^c\Delta(\mathfrak{g},q;c-2)+d(d-1)ym^d\Delta(\mathfrak{g},q;d-2)
			\nonumber \\
			= & \,\, c(c-1)(c-2)qxm^c\mathfrak{g}^{c-3}+d(d-1)(d-2)qym^d\mathfrak{g}^{d-3}+O\big(q^2RN^{-4}\big).
		\end{align*}
		Since the condition of Case 3 does not hold, we have
		\begin{equation*}
			|f_{nn}|>\frac{qR\log X}{QN^3},
		\end{equation*}
		and thus
		\begin{equation*}
			f_{nn}=\big(c(c-1)(c-2)xqm^{c}\mathfrak{g}^{c-3}+d(d-1)(d-2)yqm^{d}\mathfrak{g}^{d-3}\big)
			\big(1+O\big(Q^2N^{-1}(\log X)^{-1}\big)\big).
		\end{equation*}
		In a similar way, one can deduce that
		\begin{equation*}
			f_{nnm}=\big(c^2(c-1)(c-2)xqm^{c-1}\mathfrak{g}^{c-3}+d^2(d-1)(d-2)yqm^{d-1}\mathfrak{g}^{d-3}\big)
			\big(1+O\big(Q^2N^{-1}(\log X)^{-1}\big)\big),
		\end{equation*}
		and
		\begin{equation*}
			f_{nnn}=\big(c(c-1)(c-2)(c-3)xqm^{c}\mathfrak{g}^{c-4}+d(d-1)(d-2)(d-3)yqm^{d}\mathfrak{g}^{d-4}\big)
			\big(1+O\big(Q^2N^{-1}(\log X)^{-1}\big)\big).
		\end{equation*}
		For simplicity, we set $s=xm^c\mathfrak{g}^c,t=ym^d\mathfrak{g}^d$. Under these symbols, we obtain
		\begin{align}\label{fz}
			f_{nn}f_{nnm}-f_{nm}f_{nnn}=q^2m^{-1}\mathfrak{g}^{-6}(As^2+2Bst+Ct^2)\big(1+O(Q^2N^{-1}(\log X)^{-1})\big),
		\end{align}
		where
		\begin{align*}
			A = & \,\, c^3(c-1)^2(c-2)<0,
			\nonumber \\
			B = & \,\, c(c-1)d(d-1)(3cd-c^2-d^2-c-d)<0,
			\nonumber \\
			C = & \,\, d^3(d-1)^2(d-2)<0.
		\end{align*}
		Accordingly, it suffices to show that
		\begin{equation}\label{case-4-aim}
			As^2+2Bst+Ct^2\not=0.
		\end{equation}
		If $xy>0$, then (\ref{case-4-aim}) is trivial. Now, we postulate $xy<0$. It is easy to see that
		\begin{equation*}
			4B^2-4AC=4c^2(c-1)^2d^2(d-1)^2(c-d)^2(2c+2d+1+c^2+d^2-4cd)>0,
		\end{equation*}
		which implies that there exist constants $a_1,a_2,b_1,b_2$ such that
		\begin{equation*}
			As^2+2Bst+Ct^2=(a_1s+b_1t)(a_2s+b_2t).
		\end{equation*}
		By noting the fact that $A<0,B<0,C<0$, there must hold $a_1b_1>0,a_2b_2>0$. Without loss of generality, we assume that $a_1>0,b_1>0,a_2>0,b_2>0$. For $\alpha>0$ and $\beta>0$, let $\mathscr{N}(\alpha,\beta)$ denote the number of solutions of the following inequality
		\begin{equation}\label{special-inequality}
			\big|\alpha xm^cn^c+\beta ym^dn^d\big|\leqslant\frac{R}{Q^{1/2}\log X}
		\end{equation}
		subject to $m\sim M$ and $n\sim N$. Then we claim that, for sufficiently small $\sigma\in(0,1)$, there holds $\mathscr{N}(\alpha,\beta)\ll_\sigma X^{66/71}$ uniformly for $\alpha,\beta\in[\sigma,\sigma^{-1}]$. Actually, if $xy>0$, then $\mathscr{N}(\alpha,\beta)=0$. Thus, we suppose that $xy<0$. If $(m,n)$ satisfies the inequality (\ref{special-inequality}), then
		\begin{align*}
			\alpha xm^cn^c
			= & \, -\beta ym^dn^d+O\bigg(\frac{R}{Q^{1/2}\log X}\bigg)
			\nonumber \\
			= & \, -\beta ym^dn^d\big(1+O\big(Q^{-1/2}(\log X)^{-1}\big)\big),
		\end{align*}
		which implies that
		\begin{align*}
			mn = & \,\, \bigg(-\frac{\beta y}{\alpha x}\bigg)^{1/(c-d)}
			\big(1+O\big(Q^{-1/2}(\log X)^{-1}\big)\big)^{1/(c-d)}
			\nonumber \\
			= & \,\, \bigg(-\frac{\beta y}{\alpha x}\bigg)^{1/(c-d)}
			\big(1+O\big(Q^{-1/2}(\log X)^{-1}\big)\big)
			\nonumber \\
			= & \,\, \bigg(-\frac{\beta y}{\alpha x}\bigg)^{1/(c-d)}+O\big(XQ^{-1/2}(\log X)^{-1}\big).
		\end{align*}
		Therefore, it follows from a divisor argument that
		\begin{equation}\label{N-counting-upper}
			\mathscr{N}(\alpha,\beta)\ll XQ^{-1/2}\ll X^{66/71}
		\end{equation}
		holds uniformly for $\alpha,\beta\in[\sigma,\sigma^{-1}]$ with sufficiently small $\sigma\in(0,1)$. Now, we take
		\begin{equation*}
			\sigma=\frac{1}{2}\min\big(|a_1|,|a_2|,|b_1|^{-1},|b_2|^{-1}\big).
		\end{equation*}
		According to the above arguments, without loss of generality, we postulate $s$ and $t$ do not satisfy the inequality (\ref{special-inequality}). (Otherwise, if $s$ and $t$ satisfy the inequality (\ref{special-inequality}), then we can use trivial estimate and (\ref{N-counting-upper}) to counting the number of solutions pairs $(m,n)$ to get $S_I\ll X^{66/71}$.) At this time, we have
		\begin{equation*}
			|a_1s+b_1t|>\frac{R}{Q^{1/2}\log X},\qquad |a_2s+b_2t|>\frac{R}{Q^{1/2}\log X},
		\end{equation*}
		and thus
		\begin{equation*}
			|As^2+2Bst+Ct^2|\geqslant\frac{R^2}{Q(\log X)^2},
		\end{equation*}
		which combined with (\ref{fz}) implies that $f_{nnm}f_{nn}-f_{nnn}f_{nm}$ always has the same sign on subinterval of $(M,2M]$. By the above arguments, we know that $|\mathcal{G}(m,\nu)|$ is monotonic in $m$, and so is $|\mathcal{G}(m,\nu)|^{-1/2}$. Next, we compute $\mathfrak{s}_{mm}(m,\nu)$. By simple calculation, one has
		\begin{align*}
			\mathfrak{s}_{m}(m,\nu)
			= & \,\, f_m(m,\mathfrak{g})+f_n(m,\mathfrak{g})\mathfrak{g}_m-\nu\mathfrak{g}_m=f_m(m,\mathfrak{g}),
			\nonumber \\
			\mathfrak{s}_{mm}(m,\nu)
			= & \,\, f_{mm}(m,\mathfrak{g})+f_{mn}(m,\mathfrak{g})\mathfrak{g}_m=(f_{mm}f_{nn}-f_{mn}^2)f_{nn}^{-1}.
		\end{align*}
		By the calculation of $\mathcal{G}_m$, one has
		\begin{equation*}
			f_{mm}f_{nn}-f_{mn}^2=-2q^2m^{-2}n^{-4}(A_1s^2+B_1st+C_1t^2)\big(1+O(Q^2N^{-1}(\log X)^{-1})\big),
		\end{equation*}
		where
		\begin{equation*}
			A_1=c^3(c-1)^2,\qquad B_1=c(c-1)d(d-1)(c+d),\qquad C_1=d^3(d-1)^2,
		\end{equation*}
		and $B_1^2-4A_1C_1>0$. Now, if $xy>0$, one immediately obtains
		\begin{equation*}
			\big|f_{mm}f_{nn}-f_{mn}^2\big|\gg\frac{q^2R^2}{M^2N^4}.
		\end{equation*}
		If $xy<0$, then it follows from the similar arguments of $\mathcal{G}_m$ that
		\begin{equation*}
			|A_1s^2+B_1st+C_1t^2|\gg\frac{R^2}{Q(\log X)^2},
		\end{equation*}
		which implies that
		\begin{equation*}
			\big|f_{mm}f_{nn}-f_{mn}^2\big|\gg\frac{q^2R^2}{QM^2N^4(\log X)^2}.
		\end{equation*}
		Combining the above arguments and the estimate $|f_{nn}|\asymp qRN^{-3}$, we get
		\begin{equation*}
			|\mathfrak{s}_{mm}|\gg\frac{q^2R^2}{QM^2N^4(\log X)^2}\cdot\frac{N^3}{qR}=\frac{qR}{QM^2N(\log X)^2},
		\end{equation*}
		which implies that
		\begin{equation*}
			|\mathfrak{s}_{mm}|\geqslant\frac{\mathcal{C}^*qR}{QM^2N(\log X)^2}
		\end{equation*}
		holds for some $\mathcal{C}^*>0$. On the other hand, we have
		\begin{equation*}
			|\mathfrak{g}_m|=\frac{|f_{nm}|}{|f_{nn}|}\asymp\frac{qM^{-1}N^{-2}R}{qRN^{-3}}\asymp\frac{N}{M},
		\end{equation*}
		which implies the trivial estimate
		\begin{equation*}
			\big|\mathfrak{s}_{mm}\big|\ll\big|f_{mm}\big|+\big|f_{mn}\mathfrak{g}_m\big|
			\ll\frac{qR}{M^2N}+\frac{qR}{MN^2}\cdot\frac{N}{M}\ll\frac{qR}{M^2N}.
		\end{equation*}
		For $0\leqslant\ell\leqslant(\log(Q(\log X)^2))/\log 2$, we set
		\begin{equation*}
			\mathcal{I}_{\nu,\ell}=\bigg\{m\in\mathcal{I}_{\nu}:\,\frac{\mathcal{C}^*2^\ell qR}{QM^2N(\log X)^2}<
			\big|\mathfrak{s}_{mm}\big|\leqslant\frac{\mathcal{C}^*2^{\ell+1}qR}{QM^2N(\log X)^2}\bigg\}.
		\end{equation*}
		By the assumption (i.e., the condition in Case 3 does not hold), we know that
		\begin{equation*}
			\big|\mathcal{G}(m,\nu)\big|=\big|f_{nn}(m,\mathfrak{g})\big|>\frac{qR\log X}{QN^3}>\frac{qR}{QN^3}.
		\end{equation*}
		Then it follows from partial summation and Lemma \ref{van-der-Corput-lemma} that
		\begin{align*}
			& \,\, \sum_{1\leqslant q\leqslant Q}\sum_{0\leqslant j\leqslant\frac{\log(10Q)}{\log 2}}
			\sum_{\nu_1\leqslant\nu\leqslant\nu_2}\Bigg|\sum_{m\in \mathcal{I}_\nu}
			\frac{e(\mathfrak{s}(m,\nu))}{\sqrt{|\mathcal{G}(m,\nu)|}}\Bigg|
			\nonumber \\
			\ll & \,\, \sum_{1\leqslant q\leqslant Q}\sum_{0\leqslant j\leqslant\frac{\log(10Q)}{\log 2}}
			\sum_{\nu_1\leqslant\nu\leqslant\nu_2}\sum_{0\leqslant\ell\leqslant\frac{\log(Q(\log X)^2)}{\log2}}
			\Bigg|\sum_{m\in \mathcal{I}_{\nu,\ell}}
			\frac{e(\mathfrak{s}(m,\nu))}{\sqrt{|\mathcal{G}(m,\nu)|}}\Bigg|
			\nonumber \\
			\ll & \,\, \sum_{1\leqslant q\leqslant Q}\sum_{0\leqslant j\leqslant\frac{\log(10Q)}{\log 2}}
			\sum_{\nu_1\leqslant\nu\leqslant\nu_2}\sum_{0\leqslant\ell\leqslant\frac{\log(Q(\log X)^2)}{\log2}}
			\bigg(\frac{QN^3}{qR}\bigg)^{1/2}
			\nonumber \\
			& \,\, \quad\times \Bigg(M\bigg(\frac{2^\ell qR}{QM^2N(\log X)^2}\bigg)^{1/2}
			+\bigg(\frac{QM^2N(\log X)^2}{2^\ell qR}\bigg)^{1/2}\Bigg)
			\nonumber \\
			\ll & \,\, \sum_{1\leqslant q\leqslant Q}\sum_{0\leqslant j\leqslant\frac{\log(10Q)}{\log 2}}
			\sum_{\nu_1\leqslant\nu\leqslant\nu_2}\bigg(\frac{QN^3}{qR}\bigg)^{1/2}
			\Bigg(\frac{(qR)^{1/2}\log X}{N^{1/2}}+\frac{M(QN(\log X)^2)^{1/2}}{(qR)^{1/2}}\Bigg)
			\nonumber \\
			\ll & \,\, \sum_{1\leqslant q\leqslant Q}\sum_{0\leqslant j\leqslant\frac{\log(10Q)}{\log 2}}\frac{qR}{N^2}
			\bigg(\frac{QN^3}{qR}\bigg)^{1/2}
			\Bigg(\frac{(qR)^{1/2}\log X}{N^{1/2}}+\frac{M(QN(\log X)^2)^{1/2}}{(qR)^{1/2}}\Bigg)
			\nonumber \\
			\ll & \,\, Q^{5/2}RN^{-1}(\log X)^2+MQ^2(\log X)^2\ll X(\log X)^2,
		\end{align*}
		provided that $M\ll\min\big(X^{117/71}R^{-1}, X^{51/71}\big)$. This completes the proof of Lemma \ref{Type-I-upper}.
	\end{proof}

	\begin{lemma}\label{Type-II-upper}
		Suppose that $a(m)$  and $b(n)$ are sequences supported on the intervals $(M,2M]$ and $(N,2N]$, respectively. Suppose further that
		\begin{equation*}
			\sum_{m\sim M}\big|a(m)\big|^2\ll M(\log M)^{2A},\quad
			\sum_{n\sim N}\big|b(n)\big|^2\ll N(\log N)^{2B},\quad
			A>0,\quad B>0.
		\end{equation*}
		Then, for $X^{10/71}\ll N\ll\min\big(X^{112/71}R^{-1},RX^{-20/71}\big),\,MN\asymp X$, one has
		\begin{equation*}
			S_{II}=\sum_{m\sim M}a(m)\sum_{n\sim N}b(n)e\big(x(mn)^c+y(mn)^d\big)\ll X^{66/71}(\log X)^{A+B+1}.
		\end{equation*}
	\end{lemma}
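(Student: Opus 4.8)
The plan is to reduce $S_{II}$, via one application of Cauchy's inequality followed by van der Corput's inequality (Lemma~\ref{I-K-8.17}) with shift parameter $Q=[X^{6/37}]$, to a family of exponential sums of the shape $\sum_{m\sim M}e(am^c+bm^d)$, which can then be handled by the $\mathcal{S}$--lemma together with Lemma~\ref{van-der-Corput-lemma}. First I would apply Cauchy's inequality in $m$ to remove $a(m)$:
\[
\big|S_{II}\big|^2\leqslant\Bigg(\sum_{m\sim M}\big|a(m)\big|^2\Bigg)
\sum_{m\sim M}\Bigg|\sum_{n\sim N}b(n)e\big(x(mn)^c+y(mn)^d\big)\Bigg|^2
\ll M(\log X)^{2A}\,\Sigma,
\]
with $\Sigma$ denoting the inner sum over $m$. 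Applying Lemma~\ref{I-K-8.17} to the $n$--sum with $Q=[X^{6/37}]$ (so that $Q\ll N$ and $1+N/Q\asymp N/Q$, by the hypothesis $X^{6/37}\ll N$), separating off the term $q=0$, and estimating $\sum_{n}|b(n+q)||b(n)|\ll N(\log X)^{2B}$ by Cauchy's inequality in the remaining terms, one obtains
\[
\big|S_{II}\big|^2\ll MN(\log X)^{2A+2B}\,\frac{N}{Q}
\Bigg(M+\sum_{1\leqslant q\leqslant Q}\max_{n\sim N}\big|W_q(n)\big|\Bigg),
\qquad
W_q(n)=\sum_{m\sim M}e\big(xm^c\Delta(n,q;c)+ym^d\Delta(n,q;d)\big),
\]
where $\Delta(n,q;t)=(n+q)^t-n^t$ as in the Type~I proof. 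Since $MN\asymp X$ and $Q=[X^{6/37}]$, the contribution of $M$ is $\ll X^2/Q\ll X^{68/37}$, so it remains only to prove $(N/Q)\sum_{1\leqslant q\leqslant Q}\max_{n}|W_q(n)|\ll X^{31/37}(\log X)^2$.

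To bound $W_q(n)$ I would invoke the $\mathcal{S}$--lemma with $(\gamma_1,\gamma_2)=(c,d)$, admissible since $1<d<c<39/37<2$ and $c\neq d$. Because $\Delta(n,q;t)\asymp qN^{t-1}$ and $N^{t-1}M^t\asymp X^t/N$, the relevant amplitude is $\mathcal{R}_q\asymp qR/N$, uniformly in $n\sim N$; moreover $\mathcal{R}_q\leqslant\mathcal{R}_Q\ll X^{6/37}R/N\ll M^2$ by the hypothesis $N\ll X^{56/37}R^{-1}$, so only the two cases (\ref{S-upper-1}) and (\ref{S-upper-2}) of that lemma are ever needed, the transition range $\mathcal{R}_q\asymp M$ being covered by Lemma~\ref{van-der-Corput-lemma}. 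Thus $W_q\ll M\mathcal{R}_q^{-1/2}$ for $q\ll X/R$ and $W_q\ll\mathcal{R}_q^{1/2}+M\mathcal{R}_q^{-1/3}$ for $X/R\ll q\leqslant Q$. Splitting the sum over $q$ dyadically and inserting these estimates yields three pieces: those arising from $M\mathcal{R}_q^{-1/2}$ and from $M\mathcal{R}_q^{-1/3}$ are $\ll X^{31/37}$ because $N\ll RX^{-12/37}$, and the one arising from $\mathcal{R}_q^{1/2}$ is $\ll X^{31/37}$ because $N\ll X^{56/37}R^{-1}$. Assembling the bounds gives $|S_{II}|^2\ll X^{68/37}(\log X)^{2A+2B+2}$, equivalently $S_{II}\ll X^{34/37}(\log X)^{A+B+1}$, as claimed.

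The main difficulty lies entirely in making the constraints of the last step compatible. One must take $Q$ large enough that the diagonal term $X^2/Q$ is $\ll X^{68/37}$, which forces $Q$ of size $X^{6/37}$; small enough that $\mathcal{R}_q=qR/N$ remains $\ll M^2$ for all $q\leqslant Q$, so that no sum of genuinely large amplitude occurs and the available two--case estimate for $\mathcal{S}$ suffices; and one then needs each dyadic block of $\sum_{q\leqslant Q}|W_q|$ to be dominated by a suitable power of $N$. It is exactly the two upper bounds $N\ll X^{56/37}R^{-1}$ and $N\ll RX^{-12/37}$ in the hypothesis that reconcile these demands, the first controlling the $\mathcal{R}_q^{1/2}$ term and the second the $M\mathcal{R}_q^{-1/2}$ and $M\mathcal{R}_q^{-1/3}$ terms. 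A routine additional remark is that $\Delta(n,q;c)$ and $\Delta(n,q;d)$ may have either sign relative to $x$ and $y$; but the $\mathcal{S}$--lemma is insensitive to signs, so unlike in Lemma~\ref{Type-I-upper} no separate discussion of $xy>0$ and $xy<0$ is required.
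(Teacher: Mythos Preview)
Your proposal is correct and follows essentially the same route as the paper: Cauchy in $m$, the Weyl--van der Corput shift (Lemma~\ref{I-K-8.17}) in $n$ with $Q\asymp X^{6/37}$, and then the $\mathcal{S}$--lemma to bound the resulting sums $W_q(n)$ of amplitude $\mathcal{R}_q\asymp qR/N$. The only cosmetic differences are that the paper takes $Q=[X^{6/37}(\log X)^{-1}]$ and applies the single estimate~(\ref{S-upper-2}) uniformly for all $1\leqslant q\leqslant Q$ (which is legitimate since $\mathcal{R}_q\gg R/N\gg1$ forces $M\mathcal{R}_q^{-1/2}\ll M\mathcal{R}_q^{-1/3}$, so~(\ref{S-upper-1}) is subsumed), whereas you split explicitly into the ranges $\mathcal{R}_q\ll M$ and $\mathcal{R}_q\gg M$.
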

	\begin{proof}
		Take $Q=[X^{10/71}(\log X)^{-1}]=o(N)$. It follows from Cauchy's inequality and Lemma \ref{I-K-8.17} that
		\begin{equation}\label{S_II-square-upper}
			\big|S_{II}\big|^2\ll\frac{X^2(\log X)^{2(A+B)}}{Q}+\frac{X(\log X)^{2A}}{Q}\sum_{1\leqslant q\leqslant Q}
			\sum_{\substack{n\sim N\\ n+q\sim N}}\big|b(n+q)b(n)\big|\Bigg|\sum_{m\sim M}e\big(f(m,n)\big)\Bigg|,
		\end{equation}
		where
		\begin{equation*}
			f(m,n)=xm^c\Delta(n,q;c)+ym^d\Delta(n,q;d),
		\end{equation*}
		and
		\begin{equation*}
			\Delta(n,q;t)=(n+q)^t-n^t.
		\end{equation*}
		By (\ref{S-upper-2}), we obtain
		\begin{equation}\label{inner-upper}
			\sum_{m\sim M}e\big(f(m,n)\big)\ll q^{1/2}R^{1/2}N^{-1/2}+Mq^{-1/3}R^{-1/3}N^{1/3}.
		\end{equation}
		By noting the fact that $Q=o(N)$, it is easy to see that, for fixed $q$ subject to $1\leqslant q\leqslant Q$, there holds
		\begin{equation}\label{inner-b-upper}
			\sum_{\substack{n\sim N\\ n+q\sim N}}\big|b(n+q)b(n)\big|\ll\sum_{n\sim N}\big|b(n)\big|^2+
			\sum_{\substack{n\sim N\\ n+q\sim N}}\big|b(n+q)\big|^2\ll N(\log N)^{2B}.
		\end{equation}
		Combining (\ref{S_II-square-upper}), (\ref{inner-upper}) and (\ref{inner-b-upper}), we get
		\begin{align*}
			& \,\, \sum_{1\leqslant q\leqslant Q}\sum_{\substack{n\sim N\\ n+q\sim N}}\big|b(n+q)b(n)\big|
			\Bigg|\sum_{m\sim M}e\big(f(m,n)\big)\Bigg|
			\nonumber \\
			\ll & \,\, N(\log N)^{2B}\sum_{1\leqslant q\leqslant Q}
			\big(q^{1/2}R^{1/2}N^{-1/2}+Mq^{-1/3}R^{-1/3}N^{1/3}\big)
			\nonumber \\
			\ll & \,\, N(\log N)^{2B}\big(Q^{3/2}R^{1/2}N^{-1/2}+MN^{1/3}Q^{2/3}R^{-1/3}\big)
			\nonumber \\
			\ll & \,\, (\log N)^{2B}\big(N^{1/2}X^{15/71}R^{1/2}+N^{1/3}X^{233/213}R^{-1/3}\big)\ll X,
		\end{align*}
		provided that
		\begin{equation}
			N\ll\min\big(X^{112/71}R^{-1},RX^{-20/71}\big).
		\end{equation}
		This completes the proof of Lemma \ref{Type-II-upper}.
	\end{proof}

	\begin{lemma}\label{Heath-Brown-identity}
		Let $z\geqslant1$ and $k\geqslant1$. Then, for any $n\leqslant2z^k$, there holds
		\begin{equation*}
			\Lambda(n)=\sum_{j=1}^k(-1)^{j-1}\binom{k}{j}\mathop{\sum\cdots\sum}_{\substack{n_1n_2\cdots n_{2j}=n\\
					n_{j+1},\dots,n_{2j}\leqslant z }}(\log n_1)\mu(n_{j+1})\cdots\mu(n_{2j}).
		\end{equation*}
	\end{lemma}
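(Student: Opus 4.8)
The plan is to derive this from the generating-function identity underlying the Heath--Brown decomposition and then verify the range $n\leqslant 2z^{k}$ by a short support argument. One works in the half-plane $\Re s>1$, where $\zeta(s)$, $\zeta'(s)$ and $-\zeta'(s)/\zeta(s)=\sum_n\Lambda(n)n^{-s}$ converge absolutely and the finite Dirichlet polynomial $M(s):=\sum_{m\leqslant z}\mu(m)m^{-s}$ is entire; all rearrangements below are then legitimate, and the conclusion will follow by comparing the coefficients of $n^{-s}$ on the two sides (uniqueness of Dirichlet series). The key preliminary observation is that $\mathcal F(s):=1-\zeta(s)M(s)$ has all its Dirichlet coefficients supported on integers $m>z$: the coefficient of $m^{-s}$ in $\zeta(s)M(s)$ is $\sum_{d\mid m,\,d\leqslant z}\mu(d)$, which equals $\sum_{d\mid m}\mu(d)=[m=1]$ whenever $m\leqslant z$. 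Consequently $\mathcal F(s)^{k}$ is supported on integers that are products of $k$ integers each exceeding $z$, hence on integers $m\geqslant(\lfloor z\rfloor+1)^{k}$, and $(\lfloor z\rfloor+1)^{k}>z^{k}$.

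Next I would carry out the algebraic step. By the binomial theorem, with $w=\zeta(s)M(s)$,
\[
\sum_{j=1}^{k}(-1)^{j-1}\binom{k}{j}w^{j}
=-\!\left(\sum_{j=0}^{k}\binom{k}{j}(-w)^{j}-1\right)
=1-(1-w)^{k}=1-\mathcal F(s)^{k}.
\]
Multiplying by $-\zeta'(s)/\zeta(s)$ and using $\big(-\zeta'(s)/\zeta(s)\big)\,w^{j}=(-\zeta'(s))\,\zeta(s)^{j-1}M(s)^{j}$ gives
\[
\sum_{j=1}^{k}(-1)^{j-1}\binom{k}{j}(-\zeta'(s))\,\zeta(s)^{j-1}M(s)^{j}
=-\frac{\zeta'(s)}{\zeta(s)}+\frac{\zeta'(s)}{\zeta(s)}\,\mathcal F(s)^{k}.
\]
Expanding $-\zeta'(s)=\sum_{n_1}(\log n_1)n_1^{-s}$, writing $\zeta(s)^{j-1}$ as a sum over $j-1$ unrestricted variables $n_2,\dots,n_j$ and $M(s)^{j}$ as a sum over $j$ variables $n_{j+1},\dots,n_{2j}\leqslant z$ weighted by $\mu$, one checks that the coefficient of $n^{-s}$ on the left-hand side is precisely the right-hand side of the asserted identity; while the coefficient of $n^{-s}$ on the right-hand side is $\Lambda(n)$ plus the coefficient of $n^{-s}$ in $(\zeta'(s)/\zeta(s))\mathcal F(s)^{k}$.

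It then remains to show that this extra coefficient vanishes when $n\leqslant 2z^{k}$. It is a sum of terms $\Lambda(a)c_b$ over $ab=n$, where $c_b$ is the $b$-th Dirichlet coefficient of $\mathcal F(s)^{k}$ and hence is nonzero only for $b\geqslant(\lfloor z\rfloor+1)^{k}$. If $n\leqslant 2z^{k}$, then $a=n/b\leqslant 2z^{k}/(\lfloor z\rfloor+1)^{k}<2$, forcing $a=1$, and $\Lambda(1)=0$; so the extra coefficient is $0$. Comparing coefficients then yields the identity for all $n\leqslant 2z^{k}$. I expect the only delicate point to be this final range check: the factor $2$ in the hypothesis $n\leqslant 2z^{k}$ is exactly what the strict inequality $\lfloor z\rfloor+1>z$, together with $\Lambda(1)=0$, can absorb, and everything else is routine bookkeeping with Dirichlet series.
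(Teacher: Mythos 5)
Your proof is correct, and it is essentially the standard argument that the paper simply cites (Heath--Brown's own derivation via the identity $-\zeta'/\zeta=\sum_{j=1}^k(-1)^{j-1}\binom{k}{j}(-\zeta')\zeta^{j-1}M^j+(\zeta'/\zeta)(1-\zeta M)^k$ and comparison of Dirichlet coefficients); the paper offers no proof of its own beyond that reference. Your handling of the range condition is the right one: the coefficients of $(1-\zeta M)^k$ are supported on $b\geqslant(\lfloor z\rfloor+1)^{k}>z^{k}$, so $n\leqslant 2z^{k}$ forces the complementary factor $a=n/b$ to equal $1$, where $\Lambda(1)=0$.
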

	\begin{proof}
		See the arguments on pp. 1366--1367 of Heath--Brown \cite{Heath-Brown-1982}.
	\end{proof}

	\section{Exponential sum estimate on}\label{expo-esti-sec}
	In this section, we shall demonstrate the details of the proof of Lemma \ref{S(x,y)-Omega_2}. Set
	\begin{align*}
		\mathfrak{A}=\min\big(X^{117/71}R^{-1},X^{51/71}\big),\qquad \mathfrak{B}=X^{10/71},\qquad
		\mathfrak{C}=\min\big(X^{112/71}R^{-1},RX^{-20/71}\big).
	\end{align*}
	It follows from the inequality $X^{3/4-\eta}\ll R\ll X^{79/71}(\log X)^{-400}$ that
	\begin{align*}
		X^{1/9}\ll X/\mathfrak{A}\leqslant\mathfrak{C},\qquad \mathfrak{B}^2\leqslant\mathfrak{C}.
	\end{align*}
	Trivially, we have
	\begin{align}\label{S(x,y)-trans}
		S(x,y)=\sum_{\lambda X<n\leqslant X}\Lambda(n)e(n^cx+n^dy)+O(X^{1/2}).
	\end{align}
	According to Heath--Brown's identity (i.e., Lemma \ref{Heath-Brown-identity}) with $k=10$, it is easy to see that
	\begin{align*}
		\sum_{\lambda X<n\leqslant X}\Lambda(n)e(n^c x+n^dy)
	\end{align*}
	can be written as linear combination of $O\big((\log X)^{20}\big)$ sums, each of which is of the form
	\begin{align*}
		\mathscr{U}(N_1,\dots,N_{20})
		:= & \,\, \sum_{n_1\sim N_1}\cdots\sum_{n_{20}\sim N_{20}}(\log n_1)\mu(n_{11})\mu(n_{12})\cdots\mu(n_{20})
		\nonumber \\
		& \,\, \qquad\qquad\qquad\qquad\times e\big((n_1\dots n_{20})^cx+(n_1\dots n_{20})^dy\big),
	\end{align*}
	where $N_1\dots N_{20}\asymp X, N_j\leqslant(2X)^{1/10},(j=11,12,\dots,20)$, and some $n_i$ may only take value $1$. Therefore, it is sufficient for us to give upper bound estimates as follows
	\begin{equation*}
		\mathscr{U}(N_1,\dots,N_{20})\ll X^{66/71}(\log X)^{185}.
	\end{equation*}
	Next, we shall consider three cases.
	
	\noindent
	\textbf{Case 1.} If there exists an $N_j$ which satisfies $N_j\gg X/\mathfrak{A}\gg X^{1/9}$, then there must hold $j\leqslant10$ for the fact that $N_j\ll X^{1/10}$ with $j=11,12,\dots,20$. Let
	\begin{equation*}
		m=\prod_{\substack{1\leqslant i\leqslant20\\ i\not=j}}n_i,\qquad n=n_j,\qquad
		M=\prod_{\substack{1\leqslant i\leqslant20\\ i\not=j}}N_i,\qquad N=N_j.
	\end{equation*}
	In this case, we can see that $\mathscr{U}(N_1,\dots,N_{20})$ is a sum of ``Type I'', i.e., $S_I$, subject to
	$M\ll\mathfrak{A}$. By Lemma \ref{Type-I-upper} and a divisor argument, we derive that
	\begin{equation*}
		\mathscr{U}(N_1,\dots,N_{20})\ll X^{66/71}(\log X)^{185}.
	\end{equation*}

	\noindent
	\textbf{Case 2.} If there exists an $N_j$ such that $\mathfrak{B}\leqslant N_j<X/\mathfrak{A}\leqslant\mathfrak{C}$, then we take
	\begin{equation*}
		m=\prod_{\substack{1\leqslant i\leqslant20\\ i\not=j}}n_i,\qquad n=n_j,\qquad
		M=\prod_{\substack{1\leqslant i\leqslant20\\ i\not=j}}N_i,\qquad N=N_j.
	\end{equation*}
	Thus, $\mathscr{U}(N_1,\dots,N_{20})$ is a sum of ``Type II'', i.e., $S_{II}$, subject to
	$\mathfrak{B}\ll N\ll\mathfrak{C}$.
	By Lemma \ref{Type-II-upper}, we derive that
	\begin{equation*}
		\mathscr{U}(N_1,\dots,N_{20})\ll X^{66/71}(\log X)^{185}.
	\end{equation*}

	\noindent
	\textbf{Case 3.} If $N_i<\mathfrak{B}\,(i=1,2,\dots,20)$, without loss of generality, we postulate that
	$N_1\geqslant N_2\geqslant \dots\geqslant N_{20}$. Denote by $\ell$ the natural number such that
	\begin{equation*}
		N_1\dots N_{\ell-1}<\mathfrak{B},\qquad N_1\dots N_{\ell-1}N_\ell\geqslant\mathfrak{B}.
	\end{equation*}
	Since $N_1<\mathfrak{B}$ and $N_{20}<\mathfrak{B}$, then $2\leqslant\ell\leqslant19$. Therefore, there holds
	\begin{equation*}
		\mathfrak{B}\leqslant N_1\dots N_\ell=N_1\dots N_{\ell-1}\cdot N_\ell<\mathfrak{B}
		\cdot\mathfrak{B}\leqslant\mathfrak{C}.
	\end{equation*}
	Let
	\begin{equation*}
		m=\prod_{i=\ell+1}^{20}n_i,\qquad n=\prod_{i=1}^\ell n_i,\qquad
		M=\prod_{i=\ell+1}^{20}N_i,\qquad N=\prod_{i=1}^\ell N_i.
	\end{equation*}
	At this time, $\mathscr{U}(N_1,\dots,N_{20})$ is a sum of ``Type II'', i.e., $S_{II}$, subject to
	$\mathfrak{B}\ll N\ll\mathfrak{C}$.
	By Lemma \ref{Type-II-upper}, we derive that
	\begin{equation*}
		\mathscr{U}(N_1,\dots,N_{20})\ll X^{66/71}(\log X)^{185}.
	\end{equation*}
	Based on the above three cases, we obtain
	\begin{align*}
		\sum_{\lambda X<n\leqslant X}\Lambda(n)e(n^cx+n^dy)
		\ll & \,\, X^{66/71}(\log X)^{185}\cdot(\log X)^{20}
		\nonumber \\
		\ll & \,\, X^{66/71}(\log X)^{205},
	\end{align*}
	which combined with (\ref{S(x,y)-trans}) implies
	\begin{equation*}
		S(x,y)\ll X^{66/71}(\log X)^{205}.
	\end{equation*}
	This completes the proof of Lemma \ref{S(x,y)-Omega_2}.

\section*{Acknowledgement}
The authors would like to appreciate the referee for his/her patience in refereeing this paper.
This work is supported by Beijing Natural Science Foundation (Grant No. 1242003), and
the National Natural Science Foundation of China (Grant Nos. 12471009, 12301006, 11901566, 12001047).


\begin{thebibliography}{99}
\bibitem{Baker-Weingartner-2013}R. Baker, A. Weingartner, \textit{Some applications of the double large sieve},
		Monatsh. Math.,  \textbf{170} (2013), no. 3--4, 261--304.
		
\bibitem{Baker-2021}R. Baker, \textit{Some Diophantine equations and inequalities with primes},
		Funct. Approx. Comment. Math., \textbf{64} (2021), no. 2, 203--250.
		
\bibitem{Garaev-2003}M. Z. Garaev, \textit{On the Waring--Goldbach problem with small non--integer exponent},
		Acta Arith., \textbf{108} (2003), no. 3, 297--302.
		
\bibitem{Graham-Kolesnik-book}S. W. Graham, G. Kolesnik, \textit{Van der Corput's method of exponential sums},
		Cambridge University Press, New York, 1991.
		
\bibitem{Han-Liu-Zhang-2021}X. Han, H. Liu, D. Zhang, \textit{A system of two Diophantine inequalities with primes}, J. Math., 2021, Art. ID 6613947, 12 pp.
		
\bibitem{Heath-Brown-1982}D. R. Heath--Brown, \textit{Prime numbers in short intervals and a generalized
			Vaughan identity}, Canadian J. Math., \textbf{34} (1982), no. 6, 1365--1377.
		
\bibitem{Hua-1938}L. K. Hua, \textit{Some results in the additive prime number theory},
		Quart. J. Math. Oxford Ser. (2), \textbf{9} (1938),  no. 1, 68--80.
		
\bibitem{Iwaniec-Kowalski-book}H. Iwaniec, E. Kowalski, \textit{Analytic number theory},
		American Mathematical Society, Providence, RI, 2004.
		
\bibitem{Karatsuba-Voronin-1992}A. A. Karatsuba, S. M. Voronin, \textit{The Riemann zeta-function},
		Walter de Gruyter, Berlin, 1992.
		
\bibitem{Kratzel-book}E. Kr\"{a}tzel, \textit{Lattice points}, Kluwer Academic Publishers Group, Dordrecht, 1988.
		
\bibitem{Piatetski-Shapiro-1952}I. I. Piatetski--Shapiro, \textit{On a variant of Waring--Goldbach's problem},
		Mat. Sb., \textbf{30}(72) (1952), no. 1, 105--120.
		
\bibitem{Tolev-1992}D. I. Tolev, \textit{On a Diophantine inequality involving prime numbers},
		Acta Arith., \textbf{61} (1992), no. 3, 289--306.
		
\bibitem{Tolev-1995}D. I. Tolev, \textit{On a system of two Diophantine inequalities with prime numbers},
		Acta Arith., \textbf{69} (1995), no. 4, 387--400.
		
\bibitem{Vinogradov-1937}I. M. Vinogradov, \textit{Representation of an odd number as the sum of three primes},
		Dokl. Akad. Nauk. SSSR, \textbf{15} (1937), 169--172.
		
\bibitem{Zhai-2000}W. G. Zhai, \textit{On a system of two Diophantine inequalities with prime numbers},
		Acta Arith., \textbf{92} (2000), no. 1, 31--46.
		
\bibitem{Zhai-Cao-2003}W. G. Zhai, X. D. Cao, \textit{On a Diophantine inequality over primes},
		Adv. Math. (China), \textbf{32} (2003), no. 1, 63--73.
		
\bibitem{Zhai-Cao-2007}W. G. Zhai, X. D. Cao, \textit{On a Diophantine inequality over primes (II)},
		      Monatsh. Math., \textbf{150} (2007), no. 2, 173--179.
		
\bibitem{Zhang-Li-Long-Yang-2026}M. Zhang, J. Li, L. Long, Y. Yang, \textit{On a system of two Diophantine inequalities with five prime variables}, Front. Math., (2026), accepted, to appear.
		
\end{thebibliography}
\end{document}